\newtheorem{theorem}{Theorem}[section]
\newtheorem{prop}[theorem]{Proposition}
\newtheorem{lemma}[theorem]{Lemma}
\newtheorem{coro}[theorem]{Corollary}
\newtheorem{thm-def}[theorem]{Theorem-Definition}
\newtheorem{def-prop}[theorem]{Definition-Proposition}
\newtheorem{prop-def}[theorem]{Proposition-Definition}
\newtheorem{coro-def}[theorem]{Corollary-Definition}
\newtheorem{question}[theorem]{Question}
\newtheorem{conjecture}[theorem]{Conjecture}
\theoremstyle{definition}
\newtheorem{defn}[theorem]{Definition}
\newtheorem{remark}[theorem]{Remark}
\newtheorem{exam}[theorem]{Example}
\newcommand{\nc}{\newcommand}
\nc{\tred}[1]{\textcolor{red}{#1}}
\nc{\tblue}[1]{\textcolor{blue}{#1}}
\nc{\tgreen}[1]{\textcolor{green}{#1}}
\nc{\tpurple}[1]{\textcolor{purple}{#1}}
\nc{\btred}[1]{\textcolor{red}{\bf #1}}
\nc{\btblue}[1]{\textcolor{blue}{\bf #1}}
\nc{\btgreen}[1]{\textcolor{green}{\bf #1}}
\nc{\btpurple}[1]{\textcolor{purple}{\bf #1}}
\renewcommand{\frak}{\mathfrak}
\newcommand{\efootnote}[1]{}
\renewcommand{\textbf}[1]{}
\newcommand{\delete}[1]{{}}
	\nc{\mlabel}[1]{\label{#1} {{\tiny\tt (#1)}}\ }
	\nc{\mcite}[1]{\cite{#1} {{\tiny\tt (#1)}}\ }
	\nc{\mref}[1]{\ref{#1}{{\tiny\tt (#1)}}\ }
	\nc{\meqref}[1]{~\eqref{#1}{{\tiny\tt (#1)}}\ }
	\nc{\mbibitem}[1]{\bibitem[\bf #1]{#1}}
\nc{\mlabel}[1]{\label{#1}}  
\nc{\mcite}[1]{\cite{#1}}  
\nc{\mref}[1]{\ref{#1}}  
\nc{\meqref}[1]{~\eqref{#1}}
\nc{\mbibitem}[1]{\bibitem{#1}} 
\nc{\sbar}{, }
\nc{\wvec}[2]{{\scriptsize{ [
			\begin{array}{c} #1 \\ #2 \end{array}   ]}}}
\nc{\lp}{\big ( }
\nc{\llp}{\Big (}
\nc{\Llp}{\left (}
\nc{\rp}{\big ) }
\nc{\rrp}{\Big )}
\nc{\Rrp}{\right )}
\nc{\lb}{\big < }
\nc{\llb}{\!\Big \langle }
\nc{\Llb}{\! \left <}
\nc{\rb}{\big >  }
\nc{\rrb}{\Big \rangle \!}
\nc{\Rb}{\Big \rangle\! }
\nc{\length}{{\rm leng}}
\nc{\bin}[2]{ (_{\stackrel{\scs{#1}}{\scs{#2}}})}  
\nc{\binc}[2]{ \big (\! \begin{array}{c} \scs{#1}\\
		\scs{#2} \end{array}\! \big )}  
\nc{\bincc}[2]{  \left ( {\scs{#1} \atop
		\vspace{-1cm}\scs{#2}} \right )}  
\nc{\bs}{\bar{S}}
\nc{\cosum}{\sqsubset}
\nc{\la}{\longrightarrow}
\nc{\rar}{\rightarrow}
\nc{\dar}{\downarrow}
\nc{\dap}[1]{\downarrow \rlap{$\scriptstyle{#1}$}}
\nc{\uap}[1]{\uparrow \rlap{$\scriptstyle{#1}$}}
\nc{\defeq}{\stackrel{\rm def}{=}}
\nc{\disp}[1]{\displaystyle{#1}}
\nc{\dotcup}{\ \displaystyle{\bigcup^\bullet}\ }
\nc{\gzeta}{\bar{\zeta}}
\nc{\hcm}{\ \hat{,}\ }
\nc{\hts}{\hat{\otimes}}
\nc{\barot}{{\otimes}}
\nc{\free}[1]{\bar{#1}}
\nc{\uni}[1]{\tilde{#1}}          
\nc{\hcirc}{\hat{\circ}}
\nc{\lleft}{[}
\nc{\lright}{]}
\nc{\curlyl}{\left \{ \begin{array}{c} {} \\ {} \end{array}
	\right .  \!\!\!\!\!\!\!}
\nc{\curlyr}{ \!\!\!\!\!\!\!
	\left . \begin{array}{c} {} \\ {} \end{array}
	\right \} }
\nc{\longmid}{\left | \begin{array}{c} {} \\ {} \end{array}
	\right . \!\!\!\!\!\!\!}
\nc{\ora}[1]{\stackrel{#1}{\rar}}
\nc{\ola}[1]{\stackrel{#1}{\la}}
\nc{\ot}{\otimes}
\nc{\mot}{{{\sbar}}}
\nc{\otm}{\mot}
\nc{\scs}[1]{\scriptstyle{#1}}
\nc{\subv}{{^{\star}}}
\nc{\cov}{{^{\sharp}}}
\nc{\mrm}[1]{{\rm #1}}
\nc{\dirlim}{\displaystyle{\lim_{\longrightarrow}}\,}
\nc{\invlim}{\displaystyle{\lim_{\longleftarrow}}\,}
\nc{\proofbegin}{\noindent{\bf Proof: }}
	\nc{\proofend}{$\quad \square$ \vspace{0.3cm}}
\nc{\sha}{{\mbox{\cyr X}}}  
\nc{\shap}{{\mbox{\cyrs X}}} 
\nc{\shpr}{\diamond}    
\nc{\shplus}{\shpr^+}
\nc{\shprc}{\shpr_c}    
\nc{\msh}{\ast}
\nc{\vep}{\varepsilon}
\nc{\labs}{\mid\!}
\nc{\rabs}{\!\mid}
\newcommand{\C}{\mathbb{C}}
\newcommand{\Q}{\mathbb{Q}}
\newcommand{\R}{\mathbb{R}}
\newcommand{\Z}{\mathbb{Z}}
\newcommand {\cala}{{\mathcal {A}}}
\newcommand {\calh}{{\mathcal {H}}}
\newcommand {\calm}{{\mathcal {M}}}
\newcommand {\calp}{{\mathcal {P}}}
\nc{\fraka}{{\frak a}}
\nc{\frakA}{{\frak A}}
\nc{\frakb}{{\frak b}}
\nc{\frakB}{{\frak B}}
\nc{\frakf}{{\frak F}}
\nc{\frakh}{{\frak h}}
\nc{\frakH}{{\frak H}}
\nc{\frakk}{{\frak k}}
\nc{\frakK}{{\frak K}}
\nc{\frakM}{{\frak M}}
\nc{\frakm}{{\frak m}}
\nc{\frakP}{{\frak P}}
\nc{\frakp}{{\frak p}}
\nc{\frakS}{{\frak S}}
\nc{\bfrakM}{\overline{\frakM}}
\nc {\e} {{\epsilon}}
\nc{\fpower}{\calp_{\rm fin}}
\nc{\pfpair}[2]{\Big(\begin{array}{c}\scs{#1} \\ \scs{#2} \end{array} \Big)}
\font\cyr=wncyr10
\font\cyrs=wncyr7
\nc{\zb}[1]{\textcolor{blue}{Bin: #1}}
\nc{\xhy}[1]{\textcolor{red}{#1}}
\nc{\li}[1]{\textcolor{purple}{#1}}
\nc{\lir}[1]{\textcolor{purple}{Li: #1}}
\newcommand {\calha}{{{\mathcal {H}} _{\Z }}}
\newcommand {\calhb}{{{\mathcal {H}} _{\Z _{\ge 0}}}}
\newcommand {\calhc}{{{\mathcal {H}} _{\Z _{\le 0}}}}
\newcommand {\calhd}{{{\mathcal {H}} _{\Z _{\ge 1}}}}
\newcommand {\calia}{{{\mathcal {H}} _{\Z }^+}}
\newcommand {\calib}{{{\mathcal {H}} _{\Z _{\ge 0}}^+}}
\newcommand {\calid}{{{\mathcal {H}} _{\Z _{\ge 1}}}}
\newcommand {\rmmin} {{\rm min}}
\newcommand {\fch} {{F^{\mathrm{Ch}}}}
\newcommand {\sch} {{S^{\mathrm{Ch}}}}
\nc {\cks}{\text{\textcircled {s}}\xspace}
\begin{document}
	
	\title[Extended shuffle product]{Extended Shuffle product for multiple zeta values }
	%
	
\author{Li Guo}
\address{Department of Mathematics and Computer Science,
Rutgers University, Newark, NJ 07102, USA}
\email{liguo@rutgers.edu}	

\author{Wenchuan Hu}
\address{School of Mathematics,
Sichuan University, Chengdu, 610064, P. R. China}
\email{wenchuan@scu.edu.cn}

\author{Hongyu Xiang}
\address{School of Mathematics,
Sichuan University, Chengdu, 610064, P. R. China}
\email{xianghongyu1@stu.scu.edu.cn}

\author{Bin Zhang}
\address{School of Mathematics,
Sichuan University, Chengdu, 610064, P. R. China}
\email{zhangbin@scu.edu.cn}
	
	\date{\today}


\begin{abstract} The shuffle algebra on positive integers encodes the usual multiple zeta values (MZVs) (with positive arguments) thanks to the representations of MZVs by iterated Chen integrals of Kontsevich. Together with the quasi-shuffle (stuffle) algebra, it provides the algebraic framework to study relations among MZVs.
	
This paper enlarges the shuffle algebra uniquely to what we call the extended shuffle algebra that encodes convergent multiple zeta series with arbitrary integer arguments, not just the positive ones in the usual case. To achieved this goal, we first replace the Rota-Baxter operator of weight zero (the integral operator) that characterizes the shuffle product by the differential operator which extends the shuffle product to the larger space. We then show that the subspace corresponding to the convergent MZVs with integer arguments becomes a subalgebra under this extended shuffle product. Furthermore, by lifting the extended shuffle algebra to the locality algebra of Chen symbols, we prove that taking summations of fractions from Chen symbols defines an algebra homomorphism from the above subalgebra to the subalgebra of real numbers spanned by convergent multiple zeta series.
\end{abstract}

\subjclass[2020]{
11M32,	
16W25,	
17B38,	
16S10,	
40B05	
}
	
\keywords{multiple zeta value, shuffle product, Chen symbol, derivation, differential operator, Rota-Baxter operator, locality algebra}

	\maketitle

	\tableofcontents
	
	\setcounter{section}{0}
	
	\section{Introduction}
	Recall that \mcite{AET,Zj} the multiple zeta series
	\begin {equation}
	\mlabel {eq:zeta}
\zeta(\vec{s}):=	\sum _{n_1>\cdots > n_k>0}\frac 1{n_1^{s_1}\cdots n_k^{s_k}}
\end{equation}
converges in the region
\begin{equation}\mlabel{eq:conv}
\{\vec s \in \C ^k \ |\ \Re (s_1)+\cdots + \Re (s_i)>i, \ i=1, \cdots , k\},
\end{equation}
thus defining a holomorphic function in this region. This holomorphic function can be analytically continued to a meromorphic function $\zeta (\vec s)$ in $\C ^k$, called {\bf multiple zeta function} on $\C^k$, with simple poles at
\begin{equation*}
	\{s_1=1\}\bigcup  \{ s_{1}+s_2=2,1,0,-2,-4, \cdots \} \bigcup \bigcup_{ j=3}^k \Big\{\sum_{i=1}^j s_{i} \in \Z_{\leq j}\Big\}.
\end{equation*}
When the value of a multiple zeta function at a positive integer point is convergent, it is called a {\bf multiple zeta value} (MZV).  This is the case for $\zeta (\vec s )$ with $s_1\in \Z_{>1}$, $s_i\in \Z_{\ge 1}, 2\le i\le k$.

\subsection {Motivation}
There are rich linear relations among the MZVs over  $\Q$. Determining the structure of the relations is one of the main goals in the study of MZVs.
There are two fundamental families of relations:
\begin{enumerate}
	\item the stuffle (or quasi-shuffle) relation, based on the series expression of MZVs:
	\begin {equation*}
	\zeta (s_1, \cdots, s_k)=\sum _{n_1>\cdots > n_k>0}\frac 1{n_1^{s_1}\cdots n_k^{s_k}}.
\end{equation*}
\item the shuffle relation, based on the iterated integral expression of MZVs~\mcite{Zag}:
\begin {equation}
	\mlabel {eq:zetai}
	\zeta (s_1, \cdots s_k)=\underbrace {\int_ 0^1\frac {dt}t\int _0^t \frac {dt}t\cdots \int _0^t\frac {dt}t}_{s_1-1}\int _0^t\frac {dt}{1-t}\cdots \underbrace {\int _0^t\frac {dt}t\cdots \int _0^t\frac {dt}t}_{s_k-1}\int _0^t\frac {dt}{1-t}.
	\end{equation}
 Here the subscripts of the variable $t_i, 1\leq i\leq s_1+\cdots+s_k,$ are suppressed for simplicity.
\end{enumerate}

These two families of relations can be encoded into two associative multiplications $\ast $ and $\shap$ in the vector space
$$\calh^{0}:=\Q \oplus \bigoplus _{k\in \Z _{>0}, \vec s\in \Z _{\ge 1} ^k, s_1\ge 2 }\Q [\vec s],$$
which is the space spanned by integer points $\vec{s}$ corresponding to MZVs and the unit $1$, in the sense that the maps
$$\zeta^\ast:(\calh^{0}, \ast )\longrightarrow \R , \quad \zeta^\shap: (\calh^{0}, \shap)\longrightarrow \R, $$
both sending $\vec{s}$ to $\zeta(\vec{s})$, are algebra homomorphisms, that is,
$$\zeta^\ast([\vec s]\ast [\vec t])=\zeta ([\vec s])\zeta ([\vec t])=\zeta^\shap([\vec s]\shap [\vec t]).
$$
Consequently,
$$\Big\{\zeta^\ast ([\vec s] \ast [\vec t])-\zeta^\shap ([\vec s] \shap [\vec t])\ \Big| \ \vec s\in \Z_{\ge 1}^k, s_1>1, \vec t\in \Z_{\ge 1}^\ell , t_1>1, k, \ell \in \Z_{>0} \Big\}
$$
is a family of $\Q$-linear relations among MZVs, called the {\bf double shuffle relations}.

The double shuffle relations do not exhaust all known relations among MZVs.  For example, $\zeta (3)=\zeta (2,1)$ is not in this family. To suitably enlarge the framework, Ihara-Kaneko-Zagier \mcite{IKZ} extended the algebra homomorphisms $\zeta^\ast$ and $\zeta^\shap$ to the space
\begin{equation}
\calh _{\Z _{\ge 1}}:=\Q\oplus \bigoplus _{k\in \Z _{>0}, \vec s\in \Z _{\ge 1} ^k }\Q [\vec s],
\mlabel{eq:h0}
\end{equation}
which also carries the stuffle multiplication $\ast$ and shuffle multiplication $\shap$,  yielding algebra homomorphisms
\begin{equation} \mlabel{eq:exthom}
\zeta^{\ast}: (\calhd, \ast)\longrightarrow \R[T], \quad  \zeta^{\shap}: (\calhd, \shap)\longrightarrow \R[T].
\end{equation}
They call the family of relations
$$\Big\{[1]\ast [\vec w]-[1]\shap [\vec w], [\vec s] \ast [\vec t]-[\vec s] \shap [\vec t]\ \Big| \ [\vec w], [\vec s], [\vec t]\in  \calh^0 \Big\}
$$
the {\bf extended double shuffle relations}.

\begin{conjecture}\mcite{IKZ} The extended double shuffle relation gives all $\Q$-linear relations among MZVs.
\end{conjecture}

Despite great efforts, the conjecture is still out of reach by the current approaches except remarkable progresses on the motivic level such as~\mcite{Br,Za2}. Further insight to the conjecture might be obtained by revealing hidden structures of multiple zeta values to provide further insight. Here a hidden structure can be in the positive integer region, or in the convergent region, or even in the divergent region, of the MZVs.
So far most investigations have been carried out in the positive integer region. But the whole region where the multiple zeta series are convergent should also be a good candidate to explore. This is the motivation of this paper.

\subsection {Our approach} In exploring the algebraic structure underlying multiple zeta values with arbitrary integer arguments, the following question naturally arises.

\begin{question}
Can the stuffe and shuffle products in $\calh _{\Z _{\ge 1}}$ be further extended to the space
$$\calh ^0_\Z :=\Q{\bf 1}\oplus\bigoplus_{\vec s\in \Z^k, s_1+\cdots +s_j>j, j=1, \cdots, k, k\ge 1} \Q[\vec s],$$
that corresponds to the convergent multiple zeta series with arbitrary integer arguments?
\end{question}

Note that $\calh^0_\Z$ contains $\calh^0$ in Eq.~\meqref{eq:h0} that is usually considered for MZVs (with positive arguments). Indeed $\calh^0_\Z\cap \Z_{>0}^{(\infty)} =\calh^0$ for $\Z_{>0}^{(\infty)}:=\cup_{k\geq 1} \Z_{>0}^k$. So the notion $\calh^0_\Z$ is consistent with $\calh^0$.

It is obvious that stuffle product can be extended to the algebra of all integer points:
$$\calh _{\Z }:=\Q{\bf1} \oplus\bigoplus _{k\in \Z _{>0}, \vec s\in \Z  ^k }\Q [\vec s],
$$
and convergent points are closed under this extended stuffle product.

The situation is quite different to extend the shuffle product to $\calh_{\Z}$, for the lack of an iterated integral expression like Eq.~\meqref{eq:zetai} for convergent integer points with possibly nonpositive entries. So we need a devise to bypass the integral expression for MZVs to extend the shuffle product.

In \mcite {GZ1}, the shuffle product of MZVs is characterized as the unique product which makes a generalization of the integral operator (the operator $I$ in Eq. (\mref {eq:Idefn})) a Rota-Baxter operator, thus giving a method to construct the shuffle product without the integral expressions.

We follow this idea to extend the shuffle product to $\calh _{\Z }$. Though the operator $I$ can be extended from positive integer points to all integer points easily, it is hard to define its image for the unit. So we work with its ``inverse" operator $J$ by requiring the operator $J$ to be a differential operator. This requirement together with some other initial conditions uniquely determines the extended shuffle product.

Once the extended shuffle product is defined, we need to justify that this is the structure underlying convergent multiple zeta series by checking that:
\begin{enumerate}
  \item the subspace $\calh_{\Z}^0$ of $\calh_{\Z}$ corresponding to convergent multiple zeta series is a subalgebra,
\item evaluations for multiple zeta series defines an algebra homomorphism from this subalgebra to $\R$.
\end{enumerate}

For the first task, since the criterion for convergency is about partial weights as shown in Eq.~\meqref{eq:conv}, it is natural to ask for an estimation of partial weights of terms in the product. The possible negative entries make the estimation complicated, and we have to modify the concept of partial weights. This gives an estimation of partial weights for terms in the product in terms of modified partial weights.

The second task is quite involved. Parallel to the treatment of MZVs in \mcite {GX}, we generalize the concept of Chen fractions, and introduce its abstract version, called Chen symbols, to model the extended shuffle product. Then we use the technical tool of locality algebras \mcite {CGPZ,CGPZ2,GPZ3}, to reveal the structure of the space of Chen symbols and the connection between Chen symbols and Chen fractions.

With all these preparations, we can show that taking the multiple zeta series gives an algebra homomorphism from the algebra of convergent points to the algebra of real numbers, extending the algebra homomorphism in Eq.~\meqref{eq:exthom}.

\subsection {Outline of the paper} The goal of this paper is to explore the algebraic structure for convergent integer points of multiple zeta series, which recovers the shuffle product of MZVs when restricted to positive integer points.

Section \mref{s:shuf} gives the construction of the extended shuffle product (Theorem~\mref{lem:Inverse}).
Based on the idea in \mcite {GZ1}, we give the  construction by using the differential operator $J$, which is roughly the inverse of the Rota-Baxter operator used in \mcite {GZ1}. Essential in this construction is to determine the conditions for the extended shuffle product. Once we have the suitable conditions, the construction of this extended shuffle product follows immediately, and the associativity and uniqueness of the extended shuffle product follow by standard though tedious inductions.

We then need to justify that the constructed extended shuffle product is indeed the structure underneath the MZVs for the convergent integer points. This is accomplished in Section \mref{s:loc} and \mref{s:conv}.

More specifically, in Section \mref{s:loc}, as a model of the extended shuffle product for all integer points and mimicking the summation process, we introduce the concepts of generalized Chen fractions and Chen symbols. Then the extended shuffle product is lifted to a locality product in the space of Chen symbols and a locality algebra homomorphism is provided from the space of Chen symbols to the space of generalized Chen fractions (Proposition~\mref{prop:zhomo}). Working in locality setting is an important step to explore the hidden structures. So we begin the section with recalling some basics for locality algebras.

Then in Section \mref{s:conv}, we first give an estimation of partial weights of the terms appeared in the extended shuffle product in terms of modified partial weights. Based on this estimation, the space spanned by convergent integer points is shown to be a subalgebra of $\calh _\Z$, that is, this space is closed under this extended shuffle product (Theorem~\mref{thm:CAlg}).
After this, it is shown that taking the multiple zeta series actually gives an algebra homomorphism from  the space of convergent integer points to real numbers, bridged by locality algebra of Chen symbols and Chen fractions (Theorem~\mref{thm:zhomo}).

To keep the paper at a reasonable length, further properties of this extended shuffle product is studied in a followup paper \mcite {GXZ}, showing that this extended shuffle product gives a duality of Hopf algebras related to MZVs.

\subsection {Notation}
\mlabel{ss:notn}
We put together notations to be used in the paper.

For a set $X$, denote the set
$$H_X:=\bigcup _{k\in \Z _{\geq 0}}X^k,
$$
and let
$$\calh _X:=\Q H_X=\bigoplus _{k\in \Z _{\geq 0}, \vec s\in X ^k }\Q [\vec s]
$$
denote the vector space with a basis $H_X$. Here the new notion $[\vec{s}]$ is used in place of $\vec{s}$ to emphasize that it is only a symbol, without inheriting any structures of $H_X$ such as the componentwise operations.
Note that $H_X$ share the same underlying set as the free monoid generated by $X$ and $\calh_X$ shares the same underlying space as the noncommutative polynomial algebra $\Q\langle X \rangle$.
Also denote the subset
$$H_X^+:=\bigcup _{k\in \Z _{>0}}X^k
$$
of $H_X$ and let
$$\calh_X^+ := \bigoplus _{k\in \Z _{>0}, \vec s\in X ^k }\Q [\vec s]
$$
be the subspace of $\calh_X$ with basis $H_X^+$.

In this paper, we are mostly interested in taking $X$ to be $\Z$, $\Z_{\ge 0}$, $\Z _{\ge 1}$, $\Z _{\le 0}$, $\Z _{\le 1}$ or $\Z \times \Z _{\ge 1}$ for their relations to MZVs. For $\vec{s}=(s_1,\ldots,s_k)\in X^k$, when we write $\vec{s}=(s_1,\vec{s}\,')$, we correspondingly denote
$[\vec{s}]=[s_1,\vec{s}\,']$.

For a vector $\vec s=(s_1,\cdots, s_k) \in \Z ^k$,

\begin{enumerate}
	\item $k$ is called the {\bf depth} of $[\vec s]$ and denoted by $d([\vec s])$;
	\item the partial sum $w_i([\vec s]):=s_1+\cdots+s_i$ is called the $i$-th {\bf partial weight} of $\vec{s}$, and $w([\vec s]):=w_k([\vec s])$ is called the {\bf weight}. For convenience, let $w_0([\vec s]):=0$, $w({\bf 1}):=w_0({\bf 1}):=0$.
\end{enumerate}

\section {The extended shuffle product on $\calha$}
\mlabel{s:shuf}
The usual shuffle product on $\calh^0$ can be characterized by an integration (a Rota-Baxter operator of weight zero). Applying a derivation instead of an integration, we extend this shuffle product to a product on $\calh_{\Z}$.

\subsection{The shuffle algebra for MZVs} For $\vec s \in \Z ^k_{\ge 1}$, we have iterated integral expression (\mref {eq:zetai}) for MZVs.
This expression gives shuffle relation for MZVs. The integral expression can be put in a simpler form abstractly
\begin {equation}
\mlabel {eq:szeta}
x_0^{s_1-1}x_1\cdots x_0^{s_k-1}x_1.
\end{equation}
Thus an MZV is abstracted to an element in $x_0\Q \langle x_0, x_1 \rangle x_1$, where $\Q \langle x_0, x_1 \rangle$ is the noncommutative polynomial algebra over $\Q$ in variables $x_0, x_1$.

The shuffle relation of MZVs can be put in a more general setup. For any alphabet $\cala$, let $W:=W_\cala$ be the set of words (including the empty word ${\bf 1}$) with letters in $\cala$, and $\Q W$ be the linear space with a basis $W$. Then $\Q W$ carries a shuffle multiplication $\shap$ \mcite{H}, which is inductively defined by
$$w\shap {\bf 1}={\bf 1}\shap w=w,$$
$$a w_1\shap bw_2=a(w_1\shap b w_2)+b(a w_1\shap w_2).
$$
for $w, w_1, w_2 \in \Q W$, $a, b\in \cala$.

Through Eq. \meqref {eq:szeta}, MZVs are represented by words with alphabet $\cala =\{x_0, x_1\}$. In this case, $\Q W$ is the noncommutative polynomial algebra $\Q \langle x_0, x_1 \rangle$, while the subspace $\Q \langle x_0, x_1 \rangle x_1$ (resp. $x_0\Q \langle x_0, x_1 \rangle x_1$), namely the linear space generated by words ending with $x_1$ (resp. words starting with $x_0$ and ending with $x_1$), is a subalgebra under the shuffle multiplication. The subalgebra $\Q {\bf 1}\oplus \Q \langle x_0, x_1 \rangle x_1$ is called  the {\bf shuffle algebra} for MZVs.

With the notions in Section~\mref{ss:notn}, we have a linear isomorphism
\begin {equation}
\mlabel {eq:rho}
\rho: \calid \to \Q \langle x_0, x_1 \rangle x_1, \ [s_1, \cdots, s_k]\to x_0^{s_1-1}x_1\cdots x_0^{s_k-1}x_1,
\end{equation}
the shuffle product $\shap$ in $\Q \langle x_0, x_1 \rangle x_1$ is pulled back to  a product on $\calid$, and extended to $\calhd$ with the unit ${\bf1}$, which we still denote by $\shap$. With this product, $\calhd $ is also called the shuffle algebra for MZVs. Then
$$\calh ^0:=\rho ^{-1}(x_0\Q \langle x_0, x_1 \rangle x_1)=\bigoplus _{k\in \Z _{>0}, \vec s\in \Z _{\ge 1} ^k, s_1\ge 2 }\Q [\vec s]
$$
is a subalgebra which encodes the MZVs in the usual sense, namely, with positive integer arguments.

The shuffle product $\shap$ in $\calhd$ can be extended to $\calhb$ \mcite {GZ1} as follows. Define a linear operator
\begin {equation}
\mlabel {eq:Idefn}
I:\calib \longrightarrow \calib, \quad [s_1, s_2, \cdots, s_k]\mapsto [s_1+1, s_2, \cdots, s_k]
\end{equation}
where $[s_1, s_2, \cdots, s_k]$ is a basis element of $\calib$.
Then the extension of the shuffle product on $\calhb$ is characterized in the following proposition.

 Recall that a {\bf Rota-Baxter operator of weight $0$} on an algebra $A$ is a linear operator $I:A \to A$ such that
$$I(a)I(b)=I(aI(b))+I(I(a)b), \quad a, b\in A.
$$
See \mcite {Guo} for details.

\begin {prop} \mcite {GZ1} There is a unique commutative
multiplication $\shap $ on $\calib$ which makes $I$ into a Rota-Baxter operator of weight $0$ and
$$[0]\shap [\vec s]=[0,\vec s].
$$
It can be extended to $\calhb$ by
$${\bf1}\shap [\vec s]=[\vec s]\shap {\bf1}=[\vec s].
$$
The usual shuffle product on $\calhd$ is the restriction of this product.
\end{prop}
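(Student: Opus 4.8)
The plan is to build $\shap$ on $\calib$ by an explicit recursion in which the Rota-Baxter identity governs the generic case and the prescribed value $[0]\shap[\vec s]=[0,\vec s]$ governs the boundary case, and then to verify commutativity, associativity and uniqueness by inductions on the pair $\bigl(w([\vec s])+w([\vec t]),\ d([\vec s])+d([\vec t])\bigr)$ with the lexicographic order. Two elementary remarks keep the recursion well founded: on $\calib$ every partial weight is $\ge 0$, with $w([\vec s])=0$ exactly when all entries of $\vec s$ vanish; and each basis element of $\calib$ has precisely one of the shapes $[0]$, or $[0,\vec{s}\,']$ with $\vec{s}\,'$ a nonempty word, or $[s_1,\vec{s}\,']$ with $s_1\ge 1$, where in the last case $[s_1,\vec{s}\,']=I([s_1-1,\vec{s}\,'])$.

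Concretely I would declare ${\bf 1}\shap[\vec s]=[\vec s]\shap{\bf 1}=[\vec s]$, and for nonempty words $[\vec s]=[s_1,\vec{s}\,']$ and $[\vec t]=[t_1,\vec{t}\,']$ set
\[
[\vec s]\shap[\vec t]:=
\begin{cases}
\iota\bigl([\vec{s}\,']\shap[\vec t]\bigr) & \text{if } s_1=0,\\
\iota\bigl([\vec{t}\,']\shap[\vec s]\bigr) & \text{if } s_1\ge 1,\ t_1=0,\\
I\bigl([s_1-1,\vec{s}\,']\shap[\vec t]\bigr)+I\bigl([\vec s]\shap[t_1-1,\vec{t}\,']\bigr) & \text{if } s_1\ge 1,\ t_1\ge 1,
\end{cases}
\]
where $\iota\colon\sum_i c_i[\vec u_i]\mapsto\sum_i c_i[0,\vec u_i]$ is the linear ``prepend a $0$'' map and the first line is read as $[0,\vec t]$ when $\vec{s}\,'$ is empty. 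Every recursive call strictly lowers the size parameter (the first two lines fix the weight and lower the depth, the third lowers the weight), so $\shap$ is a well-defined bilinear map $\calib\times\calib\to\calib$, extended to $\calhb$ by a unit. The value $[0]\shap[\vec s]=[0,\vec s]$ is the first line with $\vec{s}\,'$ empty; and reading the third line backwards for basis elements $[\vec a],[\vec b]$ — whose images $I[\vec a],I[\vec b]$ have leading entry $\ge 1$ — yields $I[\vec a]\shap I[\vec b]=I\bigl([\vec a]\shap I[\vec b]\bigr)+I\bigl(I[\vec a]\shap[\vec b]\bigr)$, which extends bilinearly, so $I$ is a Rota-Baxter operator of weight $0$.

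Commutativity, the auxiliary identity $[0,\vec u]\shap[\vec v]=[\vec u]\shap[0,\vec v]$, and associativity are then proved by induction on the size parameter; the first two come quickly (commutativity from the first two lines and, for the third, from $I(a)+I(b)=I(b)+I(a)$ together with the inductive hypothesis; the auxiliary identity by a short induction on depth, the only genuine case being both leading entries $0$). Associativity is the crux. One compares $([\vec s]\shap[\vec t])\shap[\vec u]$ with $[\vec s]\shap([\vec t]\shap[\vec u])$ using the recursion, splitting into subcases according to which of the three leading entries vanish: when all three are $\ge 1$ this is exactly the familiar computation that the weight-zero Rota-Baxter axiom forces associativity — the same reorganization of six terms as for the classical shuffle product — and the subcases with a leading $0$ are brought down to it using the first two lines and the auxiliary identity, with care taken that $\iota$ and $I$ do not commute (indeed $I\circ\iota$ is ``prepend a $1$''). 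I expect this multi-case associativity induction to be the main, though routine, labor of the proof.

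For uniqueness, any associative commutative product $\shap'$ on $\calib$ making $I$ a Rota-Baxter operator of weight $0$ with $[0]\shap'[\vec s]=[0,\vec s]$ must obey the three displayed lines: the third is the Rota-Baxter identity applied to $[\vec s]=I([s_1-1,\vec{s}\,'])$, $[\vec t]=I([t_1-1,\vec{t}\,'])$; the first follows from $[0,\vec{s}\,']=[0]\shap'[\vec{s}\,']$, moving $[0]$ across by associativity and invoking the $[0]$-rule; the second from commutativity. Hence $\shap'=\shap$ by the same induction, and the extension to $\calhb$ by a unit ${\bf 1}$ is forced. Finally, the restriction to $\calhd$ is identified with the usual shuffle product by induction on $w([\vec s])+w([\vec t])$ for tuples $\vec s,\vec t$ of positive integers, comparing the third line (the only one that applies at the top) with the defining recursion of the word shuffle under $\rho$: although the recursive calls momentarily produce elements with a leading $0$, these always occur in the form $I$ or $I\circ\iota$ applied to products of positive-integer tuples, and since $\rho$ intertwines $I$ with left multiplication by $x_0$ and $I\circ\iota$ with left multiplication by $x_1$, the third line reproduces precisely the instances $x_0u\shap x_0v=x_0(u\shap x_0v)+x_0(x_0u\shap v)$ and $x_iu\shap x_1v=x_i(u\shap x_1v)+x_1(x_iu\shap v)$ ($i\in\{0,1\}$) of that recursion according to the leading letters of $\rho([\vec s])$ and $\rho([\vec t])$; in particular $\calhd$ is closed under $\shap$.
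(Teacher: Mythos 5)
Your proposal is correct and follows essentially the same route as the source: the paper itself only cites \cite{GZ1} for this proposition, but your three-case recursion is exactly the restriction to $\Z_{\ge 0}$ of the paper's own Definition~\ref{de:extsha} (Cases 1--3, with $I$ in place of $J$), and your inductions for commutativity, associativity, uniqueness, and the identification with the word shuffle under $\rho$ mirror the paper's Lemmas~\ref{lem:der}, \ref{lem:111}, \ref{lem:11} and Propositions~\ref{pp:asso}, \ref{pp:nn} for the extended product. The only cosmetic difference is that your Case~2 defines $[\vec s]\shap[0,\vec t\,']$ as $\iota([\vec t\,']\shap[\vec s])$ rather than $\iota([\vec s]\shap[\vec t\,'])$, which your joint commutativity induction reconciles.
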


\subsection {Extended shuffle product}
Following the above idea, we can extend the shuffle product further to the space $\calha$. The key ingredient in the construction is the inverse operator of the operator $I$ in Eq. \meqref {eq:Idefn}. The operator $I$ can be extended by the same definition on the basis:
\begin {equation}
\mlabel {eq:NewI}
I:\calia \longrightarrow \calia, \quad [s_1, s_2, \cdots, s_k]\mapsto [s_1+1, s_2, \cdots, s_k].
\end{equation}

Obviously, this operator $I$ is invertible in $\calia$. The inverse is given by
\begin{equation}\mlabel{eq:jmap}
J: \calia \longrightarrow \calia, \quad [s_1, s_2, \cdots, s_k]\mapsto [s_1-1, s_2, \cdots, s_k]
\end{equation}
on a basis element.

A Rota-Baxter operator is closely related to a differential operator as shown in the following well-known and easily checked fact.

\begin {lemma} Let $D: A \to A $ be an invertible linear operator on an algebra $A$. The following statements are equivalent.
\begin{enumerate}
\item $D$ is a differential operator, that is,
$$ D(ab)=D(a)b+aD(b),  \quad a, b\in A.
$$
\item Its inverse $P: A \to A $ is a Rota-Baxter operator of weight $0$.
\end{enumerate}
\end{lemma}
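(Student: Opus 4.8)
The plan is to prove the two implications by a direct computation, using only that $P$ is a two-sided inverse of $D$, so that $P\circ D = D\circ P = \rmid$ and, in particular, both $P$ and $D$ are injective and surjective. The substitution that makes everything work is to parametrize elements of $A$ either as values of $P$ (when one wants to apply the Leibniz rule) or as values of $D$ (when one wants to apply the Rota--Baxter identity).

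For $(1)\Rightarrow(2)$, I would assume the Leibniz rule for $D$ and let $x,y\in A$ be arbitrary. Writing $a=P(x)$ and $b=P(y)$, so that $D(a)=x$ and $D(b)=y$, the Leibniz rule gives
$$D\big(P(x)P(y)\big) = D(P(x))\,P(y) + P(x)\,D(P(y)) = x\,P(y) + P(x)\,y.$$
Applying $P$ to both sides and using $P\circ D = \rmid$ on the left-hand side yields
$$P(x)\,P(y) = P\big(x\,P(y)\big) + P\big(P(x)\,y\big),$$
which is precisely the weight-zero Rota--Baxter identity for $P$. Since $x,y$ were arbitrary, $P$ is a Rota--Baxter operator of weight $0$.

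For $(2)\Rightarrow(1)$, I would assume the Rota--Baxter identity for $P$ and let $a,b\in A$ be arbitrary. Applying that identity with $D(a)$ and $D(b)$ in place of its two arguments gives
$$P(D(a))\,P(D(b)) = P\big(D(a)\,P(D(b))\big) + P\big(P(D(a))\,D(b)\big),$$
that is, $ab = P\big(D(a)\,b\big) + P\big(a\,D(b)\big) = P\big(D(a)\,b + a\,D(b)\big)$, using $P\circ D = \rmid$. On the other hand $ab = P\big(D(ab)\big)$, again by $P\circ D = \rmid$, so $P\big(D(ab)\big) = P\big(D(a)\,b + a\,D(b)\big)$; since $P$ is injective we conclude $D(ab) = D(a)\,b + a\,D(b)$, i.e. $D$ is a differential operator.

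I do not expect any genuine obstacle here: each direction is a one-line manipulation once the substitution $a=P(x),\,b=P(y)$ (respectively the evaluation of the Rota--Baxter identity at $D(a),D(b)$) is made, and the only things to keep track of are the identities $P\circ D = D\circ P = \rmid$ and the injectivity of the invertible operators. I would therefore simply record the argument as this short symmetric computation.
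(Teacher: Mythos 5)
Your proof is correct. The paper gives no proof of this lemma at all --- it is stated as a ``well-known and easily checked fact'' --- and your two substitutions ($a=P(x),\,b=P(y)$ in one direction, evaluation of the Rota--Baxter identity at $D(a),D(b)$ together with injectivity of $P$ in the other) are exactly the standard verification the authors are alluding to.
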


The operator $I$ can not be extended to $\calha$ in a direct way and keep the Rota-Baxter property because of the unit ${\bf 1}$. But $J$ can be extended to $\calha$ easily by taking
$$J({\bf 1})=0.
$$
So it is more convenient to work with the linear operator $J$.

It is worth emphasizing that $J$ and $I$ are mutually inverses on the nonunitary algebra $\calia$. But not on the unitary algebra $\calha$.

The shuffle product on $\calhd$ behaves well with respect to both the weight and depth. Concretely, let
$$W_{\Z_ {\ge 1},0}:=\{{\bf 1}\}, \ D_{\Z_ {\ge 1},0}:=\{{\bf 1}\}.
$$
Also, for $n\in \Z _{\ge 1}$, let
$$W_{\Z_ {\ge 1},n}:=\{[\vec s] \ |\ \vec s \in \Z _{\ge 1}^k, \ w([\vec s])=n\},\ D_{\Z _ {\ge 1},n}:=\{[\vec s] \ | \ \vec s \in \Z _{\ge 1}^n\},
$$
and let $\Q W_{\Z _ {\ge 1},n}$, $\Q D_{\Z _ {\ge 1},n}$ be their linear spans over $\Q$. Then we have
$$\calhd =\bigoplus _{n\ge 0} \Q W_{\Z_ {\ge 1},n}=\bigoplus _{n\ge 0} \Q D_{\Z_ {\ge 1},n},
$$
and the shuffle product is a graded product with respect to both gradings. More precisely,
$$\shap: \Q W_{\Z _ {\ge 1},n}\otimes \Q W_{\Z _ {\ge 1},m}\to \Q W_{\Z _ {\ge 1},n+m}
$$
and
$$\shap: \Q D_{\Z _ {\ge 1},n}\otimes \Q D_{\Z _ {\ge 1},m}\to \Q D_{\Z _ {\ge 1},n+m}.
$$

The grading by depth can be extended to $\calha$ easily: taking
$$ \ D_{\Z,0}:=\{{\bf 1}\},
\ D_{\Z ,n}:=\{[\vec s] \ | \ \vec s \in \Z ^n\}, \quad n\in \Z _{\ge 1},
$$
then
\begin {equation}
\mlabel {eq:dgrade}
\calha =\bigoplus _{n\ge 0} \Q D_{\Z ,n}.
\end{equation}

By the definition of $J$, we have
\begin {lemma}
\mlabel {lem:JStable}
For $n\in \Z _{\ge 1}$, there is $J: \Q D_{\Z ,n}\to \Q D_{\Z ,n}$, and $J|_{\Q D_{\Z ,n}}$ is invertible.
\end{lemma}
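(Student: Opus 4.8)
The plan is to argue directly from the definition of $J$ on basis elements, with no machinery needed. Fix $n\in\Z_{\ge 1}$. A basis element of $\Q D_{\Z,n}$ has the form $[\vec s\,]$ with $\vec s=(s_1,\ldots,s_n)\in\Z^n$, and by Eq.~\eqref{eq:jmap} we have $J([s_1,s_2,\ldots,s_n])=[s_1-1,s_2,\ldots,s_n]$, which again lies in $D_{\Z,n}$ since the depth is unchanged. Note that the assumption $n\ge 1$ is exactly what is needed so that every basis element has a first entry to decrease and the exceptional value $J(\mathbf{1})=0$ never occurs; this is the only point requiring any attention. Thus $J$ restricts to a linear endomorphism of $\Q D_{\Z,n}$, giving the first assertion.

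For invertibility, I would observe that the set map $D_{\Z,n}\to D_{\Z,n}$, $[s_1,\ldots,s_n]\mapsto[s_1-1,s_2,\ldots,s_n]$, is a bijection of the basis $D_{\Z,n}$ onto itself, because $s_1\mapsto s_1-1$ is a bijection of $\Z$. A linear map that permutes a basis is invertible. Concretely, the operator $I$ of Eq.~\eqref{eq:NewI} likewise restricts to an endomorphism of $\Q D_{\Z,n}$ (it too leaves the depth unchanged), and on each basis element $[s_1,\ldots,s_n]\in D_{\Z,n}$ one checks $I\circ J=J\circ I=\mathrm{id}$; by linearity these identities hold on all of $\Q D_{\Z,n}$. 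Hence $(J|_{\Q D_{\Z,n}})^{-1}=I|_{\Q D_{\Z,n}}$, which completes the proof.

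There is essentially no obstacle in this lemma: it is a bookkeeping statement recording that the depth grading of Eq.~\eqref{eq:dgrade} is compatible with $J$ and that $J$ is a simple index shift on each graded piece. The only subtlety worth flagging explicitly — which I would do in the proof — is that invertibility genuinely requires $n\ge 1$, since on $\Q D_{\Z,0}=\Q\mathbf{1}$ the operator $J$ is the zero map.
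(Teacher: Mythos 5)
Your proof is correct and matches the paper's treatment: the paper offers no argument beyond ``by the definition of $J$,'' and your write-up simply makes explicit the routine checks (depth preservation, the bijection $s_1\mapsto s_1-1$ on the first entry, and $I|_{\Q D_{\Z,n}}$ as the inverse) that the authors leave implicit. The observation that $n\ge 1$ is needed precisely to avoid the case $J({\bf 1})=0$ is a worthwhile clarification but not a departure from the paper's approach.
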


\begin {remark} The space $\calha$ also has a grading with respect to weight: take
$$W_{\Z,0}:=\{{\bf 1}, [\vec s] \ |\ \vec s \in \Z ^k, \ w([\vec s])=0\},
$$
and for a nonzero integer $n$, take
$$W_{\Z,n}:=\{[\vec s] \ |\ \vec s \in \Z ^k, \ w([\vec s])=n\}.
$$
Then
$$
\calha =\bigoplus _{n \in \Z} \Q W_{\Z ,n}.
$$
However, the operator $J$ does not behave so well with respect to this grading due to a subtlety at $1$.
\end{remark}

With all the preparations, we can state our main theorem in this section.
\begin{theorem}
\mlabel{thm:Xshap}
With respect to the depth grading in \meqref {eq:dgrade}, there is a unique graded associative product $\shap$ on $\calha$,  called the {\bf extended shuffle product}, which has the unit ${\bf 1}$ and such that
\begin{enumerate}
\item $[0]\shap [\vec s]=[0,\vec s]$ for $\vec s\in \Z^k$;
\item $[\vec s]\shap [0]=[0, \vec s]$ for $\vec s\in\Z_{>0}\times \Z^{k-1}$;
\item $J$ is a differential operator with respect to $\shap$.
\end{enumerate}
\mlabel {lem:Inverse}
\end{theorem}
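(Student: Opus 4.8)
The plan is to observe that requirements (i)--(iii), together with associativity, unitality and the depth grading, \emph{force} a recursive formula for $[\vec s]\shap[\vec t]$, and then to run essentially the same induction for the construction, for property (iii) and for uniqueness, with a separate (and harder) induction for associativity.

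\emph{Construction.} I would extend $\shap$ $\Q$-bilinearly, set ${\bf 1}\shap x=x\shap{\bf 1}=x$, and define $[\vec s]\shap[\vec t]$ for nonempty $[\vec s]=[s_1,\vec s\,']$, $[\vec t]=[t_1,\vec t\,']$ by induction on the pair $\big(d([\vec s])+d([\vec t]),\,\mu\big)$ ordered lexicographically, where $\mu\in\Z_{\ge 0}$ measures how far the leading entries are from a base configuration: $\mu=0$ if $s_1=0$, or if $s_1\ge 1$ and $t_1=0$; $\mu=-s_1$ if $s_1\le-1$; $\mu=s_1+t_1$ if $s_1,t_1\ge 1$; $\mu=-t_1$ if $s_1\ge 1$ and $t_1\le-1$. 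The defining clauses, according to the signs of $s_1,t_1$, are: (1) if $s_1=0$, then $[0,\vec s\,']\shap[\vec t]:=\iota_0\big([\vec s\,']\shap[\vec t]\big)$, where $\iota_0$ is the linear prepend-$0$ map (${\bf 1}\mapsto[0]$, $[\vec u]\mapsto[0,\vec u]$), which lowers the total depth; (2) if $s_1\ge 1$ and $t_1=0$, then $[\vec s]\shap[0,\vec t\,']:=\iota_0\big([\vec s]\shap[\vec t\,']\big)$, again lowering the total depth; (3) if $s_1,t_1\ge 1$, then $[\vec s]\shap[\vec t]:=I\big([s_1-1,\vec s\,']\shap[\vec t]\big)+I\big([\vec s]\shap[t_1-1,\vec t\,']\big)$, the weight-$0$ Rota--Baxter recursion of \mcite{GZ1}, which fixes the depth and lowers $\mu$; (4) if $s_1\le-1$, then $[\vec s]\shap[\vec t]:=J\big([s_1+1,\vec s\,']\shap[\vec t]\big)-[s_1+1,\vec s\,']\shap[t_1-1,\vec t\,']$; (5) if $s_1\ge 1$ and $t_1\le-1$, then $[\vec s]\shap[\vec t]:=J\big([\vec s]\shap[t_1+1,\vec t\,']\big)-[s_1-1,\vec s\,']\shap[t_1+1,\vec t\,']$ --- the last two again fixing the depth and lowering $\mu$. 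A short check shows that in each clause every term on the right has strictly smaller value of $\big(d([\vec s])+d([\vec t]),\mu\big)$, so $\shap$ is well defined, depth-graded and unital. Clauses (4) and (5) are nothing but the identity ``$J$ is a derivation'' solved for the product with a non-positive leading entry; clauses (1) and (2) are what associativity forces from (i) and (ii) (for instance $[\vec s]\shap[0,\vec t\,']=[\vec s]\shap([0]\shap[\vec t\,'])=([\vec s]\shap[0])\shap[\vec t\,']=[0,\vec s]\shap[\vec t\,']$ when $s_1\ge 1$). I would note that $\shap$ is \emph{not} commutative --- e.g. $[-1]\shap[0]=[-1,0]-[0,-1]\neq[0,-1]=[0]\shap[-1]$, which is why (i) and (ii) are stated asymmetrically --- and that on $\calhb$ the clauses reproduce the product of \mcite{GZ1}, so $\shap$ restricts to the usual shuffle product on $\calhd$.

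\emph{Properties (i)--(iii).} Conditions (i) and (ii) are the $\vec s\,'=\varnothing$, respectively $\vec t\,'=\varnothing$, instances of clauses (1) and (2). That $J$ is a derivation, $J([\vec s]\shap[\vec t])=J[\vec s]\shap[\vec t]+[\vec s]\shap J[\vec t]$, I would prove by induction on $\big(d([\vec s])+d([\vec t]),\mu\big)$: for $s_1,t_1\ge 1$ it follows on applying $J=I^{-1}$ to clause (3); for $s_1\le-1$ (resp.\ $s_1\ge 1$, $t_1\le-1$) it is precisely the clause defining $[s_1-1,\vec s\,']\shap[\vec t]$ (resp.\ $[\vec s]\shap[t_1-1,\vec t\,']$) rearranged; and in clauses (1) and (2) it then follows one step further from these. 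Consequently $\calia$ is a non-unital $\shap$-subalgebra on which $J$ is invertible (Lemma~\mref{lem:JStable}), so by the lemma above relating differential operators and Rota--Baxter operators, $I$ is a weight-$0$ Rota--Baxter operator on $(\calia,\shap)$, as clause (3) already presupposes.

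\emph{Associativity (the main obstacle) and uniqueness.} I would prove $(x\shap y)\shap z=x\shap(y\shap z)$ by induction on the total depth $D=d(x)+d(y)+d(z)$, with a secondary induction that is lexicographic in the triple of leading-entry ``defects'' of $x,y,z$ --- the defect of a slot with leading entry $a$ being $\max(0,1-a)$ --- and, when all three defects vanish, a tertiary induction on the sum of the leading entries. A unit factor is trivial. Whenever a slot has leading entry $0$, both sides reduce --- by clauses (1) and (2) applied termwise --- to $\iota_0$ applied to the two sides of the associativity identity for a triple of total depth $D-1$, retiring that slot via the outer induction. Otherwise, to lower a positive defect, or to run the Rota--Baxter recursion in a zero-defect slot, I would use the identities $[\vec s]\shap w=J(I[\vec s]\shap w)-I[\vec s]\shap Jw$, its right-hand analogue, and $J(u\shap v)=Ju\shap v+u\shap Jv$ --- all consequences of ``$J$ is a derivation'' and $JI=\mathrm{id}$ on $\calia$ --- to expand both $(x\shap y)\shap z$ and $x\shap(y\shap z)$ into the \emph{same} combination of triples of strictly smaller complexity, whence equality follows from the inductive hypothesis. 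The delicate point, which I expect to be the bulk of the argument, is to choose the processing order and the ``escape to depth reduction'' conditions so that every term genuinely decreases in the chosen well-founded order: processing one slot via the $J$-identity can raise the defect of a neighbouring slot, which is exactly why the secondary induction must be lexicographic and why a slot must be retired by a depth reduction the instant its leading entry hits $0$. Finally, for uniqueness, any depth-graded associative unital product $\shap'$ satisfying (i)--(iii) likewise makes $I$ a weight-$0$ Rota--Baxter operator on $(\calia,\shap')$ by the same lemma, and a parallel induction on $\big(d([\vec s])+d([\vec t]),\mu\big)$ forces $[\vec s]\shap'[\vec t]=[\vec s]\shap[\vec t]$ clause by clause --- clauses (1), (2) from (i), (ii) and associativity, clause (3) from the Rota--Baxter identity, and clauses (4), (5) from ``$J$ is a derivation for $\shap'$'' --- so $\shap'=\shap$.
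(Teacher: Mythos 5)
Your proposal is correct and follows essentially the same route as the paper: the five defining clauses (split by the sign pattern of the leading entries) are exactly the paper's Cases 1--5, the derivation property of $J$ is established by the same case analysis, and associativity and uniqueness are handled by the same double induction on total depth and then on the leading entries, peeling off nonpositive entries via the $J$-derivation identity until the nonnegative (Rota--Baxter) case of \cite{GZ1} is reached. The only cosmetic difference is that you package the recursion as a single lexicographic well-order on $(d([\vec s])+d([\vec t]),\mu)$ and a ``defect'' function, where the paper stages the same inductions as explicit nested cases.
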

\begin{proof} The proof is carried out in Section~\mref{ss:proof}.
	\end{proof}

Recall that a {\bf differential algebra} is an algebra $A$ equipped with a differential operator $d$. Thus by Theorem~\mref{lem:Inverse}, the triple $(\calha,\shap,J)$ is a differential algebra.

\subsection{The proof of Theorem~\mref{lem:Inverse}}

\mlabel{ss:proof}

The proof of Theorem~\mref{lem:Inverse} is divided into three parts, on the construction, the associativity and the uniqueness of $\shap$ respectively.

\subsubsection {The construction of $\shap$}
We now define the extended shuffle product. Based on the requirements of the product, the product for the elements of positive depth in the basis can be defined by a double recursion. To describe the recursion, we consider a partition of $\R^2$:
\begin{equation} \mlabel{eq:r2part}
	\R^2=R_1\sqcup R_2 \sqcup R_3 \sqcup R_4\sqcup R_5,
\end{equation}
where
\begin{align*}
	R_1:=&\{(s,t)\in \R^2\,|\, s=0\},\\
	R_2:=&\{(s,t)\in \R^2\,|\, s>0, t=0\}, \\
	R_3:=&\{(s,t)\in \R^2\,|\, s>0, t>0\}, \\
	R_4:=&\{(s,t)\in \R^2\,|\, s>0, t<0\}, \\
	R_5:=&\{(s,t)\in \R^2\,|\, s<0\}.
\end{align*}
The partition is shows as follows. Here the solid vertical line and horizontal ray are $R_1$ and $R_2$ respectively. The parallel dashed lines in each of the other three regions indicate the grading for the corresponding recursive in Definition~\mref{de:extsha}.

\begin{center}
	\begin{tikzpicture}
		\draw[ultra thick,-] (0,0) -- (1.75,0);\draw[dashed,-] (0,1) -- (1,0);\draw[dashed,-] (0,0.5) -- (0.5,0);\draw[dashed,-] (0,1.5) -- (1.5,0);
		\draw[ultra thick,-] (0,-1.75) -- (0,1.75); \draw[dashed,-] (0,-0.5) -- (1.5,-0.5);\draw[dashed,-] (0,-1) -- (1.5,-1);\draw[dashed,-] (0,-1.5) -- (1.5,-1.5);
		\draw[dashed,-] (-0.5,-1.5) -- (-0.5,1.5);\draw[dashed,-] (-1,-1.5) -- (-1,1.5);\draw[dashed,-] (-1.5,-1.5) -- (-1.5,1.5);
		\node at (0,1.95) {$R_1$};\node at (2.35,0) {$R_2$}; \node at (1.15,1) {$R_3$}; \node at (1,-0.75) {$R_4$};
		\node at (-1,0) {$R_5$};
	\end{tikzpicture}
\end{center}

\begin{defn} \mlabel{de:extsha}
We define a multiplication $\shap$ on $\calh_{\Z}$ by the following series of recursions.
First take ${\bf 1}$ to be the identity element. Next let $[s_1, \vec s\,']\in D_{\Z, n}, [t_1, \vec t\,']\in D_{\Z, m}$, with the convention that if $n=1$ or $m=1$, then take $\vec{s}\,'={\bf 1}$ or $\vec{t}\,'={\bf 1}$.

Depending on which $R_i$ the pair $(s_1,t_1)$ is in, define
$[s_1,\vec{s}\,']\shap [t_1,\vec{t}\,']$ by the corresponding recursion as follows. In the five recursions, the first two are carried out first. Then the last three use the first two as the initial (boundary) conditions. We take the first two cases as Step 1 and the last three cases as Step 2.

On the sum of depths $d([s_1, \vec s\,'])+d([t_1, \vec t\,'])=n+m$ of $[s_1, \vec s\,']$ and $[t_1, \vec t\,']$, which is divided into two cases:
\begin{enumerate}
  \item {\bf Case 1.} If $(s_1,t_1)$ is in $R_1$, that is, $s_1=0$, then use recursion on the sum of depths $d([s_1, \vec s\,'])+d([t_1, \vec t\,'])=n+m$ of $[s_1, \vec s\,']$ and $[t_1, \vec t\,']$ to define
  $$[0,\vec s\,']\shap [t_1, \vec t\,']:=[0, \vec s\,'\shap [t_1, \vec t\,']].
  $$
In particular,
  $$ [0]\shap [t_1]:=[0,t_1].$$
  \item {\bf Case 2.} If $(s_1,t_1)$ is in $R_2$, that is, $s_1>0$ and $t_1=0$, then use recursion on the sum of depths $d([s_1, \vec s\,'])+d([t_1, \vec t\,'])=n+m$ of $[s_1, \vec s\,']$ and $[t_1, \vec t\,']$ to define
  $$[s_1,\vec s\,']\shap [0, \vec t\,']:=[0, [s_1,\vec s\,']\shap \vec t\,'].
  $$
In particular,
$$ [s_1]\shap [0]:=[0,s_1], \quad s_1>0.$$
 \item {\bf Case 3.} If $(s_1,t_1)$ is in $R_3$, that is $s_1>0$ and $t_1>0$, then use recursion on $s_1+t_1$ to define
  $$[s_1,\vec s\,']\shap [t_1, \vec t\,']:=I([s_1,\vec s\,']\shap [t_1-1,\vec t\,'])+I([s_1-1,\vec s\,']\shap [t_1,\vec t\,']);
  $$
  \item {\bf Case 4.} If $(s_1,t_1)$ is in $R_4$, that is $s_1>0$ and $t_1<0$, then use recursion on $|t_1|$ to define
   $$[s_1,\vec s\,']\shap [t_1, \vec t\,']:=J([s_1,\vec s\,']\shap [t_1+1, \vec t\,'])-[s_1-1, \vec s\,']\shap [t_1+1, \vec t\,'];
   $$
  \item {\bf Case 5.} If $(s_1,t_1)$ is in $R_5$, that is $s_1<0$, then use recursion on $|s_1|$ to define
  $$[s_1,\vec s\,']\shap [t_1, \vec t\,']:=J([s_1+1,\vec s\,']\shap [t_1, \vec t\,'])-[s_1+1, \vec s\,']\shap [t_1-1, \vec t\,'].
  $$
\end{enumerate}
\end{defn}

Here are some examples, one for each of the five cases.
\begin{exam}
\begin{enumerate}[$(1)$]
  \item Case 1: $[0]\shap [-1]=[0,-1]$.
  \item Case 2: $[1]\shap [0]=[0,1]$.
  \item Case 3: $[1]\shap [1]=I([0]\shap [1]+[1]\shap [0])=2[1,1]$.
  \item Case 4: $[1]\shap [-1]=J([1]\shap [0])-[0]\shap [0]=[-1,1]-[0,0]$.
  \item Case 5: $[-1]\shap [0]=J([0]\shap [0])-[0]\shap [-1]=[-1, 0]-[0, -1]$.
\end{enumerate}
\end{exam}
Note that $[0]\shap [-1]\neq [-1]\shap [0]$. So the extended shuffle product defined in this way is not commutative.
\begin{remark}
When restricted to $\calib$,  this is the construction in \mcite{GZ1}. So we have the associativity in that case.
\end{remark}

\begin{defn}
A basis element $[s_1,\vec s\,']$ is called {\bf leading positive} if $s_1>0$, and an element in $\calia$ is called {\bf leading positive} if it is a linear combination of leading positive basis elements.
\end{defn}

By definition, for leading positive basis elements $[s_1,\vec s\,']$ and $[t_1,\vec t\,']$, we have
$$[s_1,\vec s\,']\shap [t_1,\vec t\,']=I([s_1,\vec s\,']\shap [t_1-1, \vec t\,']+[s_1-1,\vec s\,']\shap [t_1, \vec t\,']).
$$
So by a simple induction we conclude

\begin{lemma}
\mlabel{lem:positive}
For leading positive basis elements $[\vec s]$ and $[\vec t]$, the product $[\vec s]\shap [\vec t]$ is leading positive.
\end{lemma}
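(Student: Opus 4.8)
The plan is to prove a mild strengthening from which the Lemma follows at once. Call a basis element $[s_1,\vec s\,']$ of $\calia$ \emph{leading nonnegative} if $s_1\geq 0$, and extend this to linear combinations as in the definition of leading positive. I claim: if $[\vec s]$ and $[\vec t]$ are leading nonnegative basis elements, then $[\vec s]\shap[\vec t]$ is leading nonnegative. Granting this, the Lemma is immediate: for leading positive basis elements $[s_1,\vec s\,']$ and $[t_1,\vec t\,']$, the pair $(s_1,t_1)$ lies in $R_3$, so Case~3 of Definition~\mref{de:extsha} gives
$$[s_1,\vec s\,']\shap[t_1,\vec t\,']=I\big([s_1,\vec s\,']\shap[t_1-1,\vec t\,']\big)+I\big([s_1-1,\vec s\,']\shap[t_1,\vec t\,']\big),$$
where each inner factor is a product of leading nonnegative basis elements (since $s_1>0$, $t_1-1\geq 0$, $s_1-1\geq 0$, $t_1>0$) and hence, by the claim, is a linear combination of leading nonnegative basis elements; since $I$ sends $[u_1,\vec u\,']$ to $[u_1+1,\vec u\,']$, it turns any leading nonnegative element into a leading positive one, so the right-hand side is leading positive.

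To prove the claim I would induct on $s_1+t_1$. If $(s_1,t_1)\in R_1$ or $R_2$, no induction is needed: by construction $[0,\vec s\,']\shap[t_1,\vec t\,']=[0,\,\vec s\,'\shap[t_1,\vec t\,']]$ and $[s_1,\vec s\,']\shap[0,\vec t\,']=[0,\,[s_1,\vec s\,']\shap\vec t\,']$ are visibly linear combinations of basis elements whose first entry equals $0$. This disposes of every case with $\min(s_1,t_1)=0$, in particular of the base case $s_1=t_1=0$. In the remaining case $s_1>0$, $t_1>0$ one uses the Case~3 recursion displayed above: both inner products are products of leading nonnegative basis elements with strictly smaller leading sum $s_1+t_1-1$, so by the inductive hypothesis each is a linear combination of leading nonnegative basis elements, and applying the (linear) operator $I$ makes each leading positive; the sum is then leading positive, in particular leading nonnegative.

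The only point that needs a moment's thought is the behaviour at the "boundary" of $R_3$: when $s_1=1$ or $t_1=1$ the reduced arguments $[s_1-1,\vec s\,']$ or $[t_1-1,\vec t\,']$ are no longer leading positive but merely leading nonnegative, which is exactly why the induction is set up on the leading nonnegative statement rather than directly on leading positive elements. Apart from this bookkeeping, the argument is the routine induction the paragraph before the Lemma alludes to, and I do not anticipate any real obstacle; one simply checks that clauses 1–3 of Definition~\mref{de:extsha} interact with the operator $I$ and with "first entry $\geq 0$" as indicated.
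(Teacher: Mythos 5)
Your proof is correct and follows essentially the same route as the paper, which simply displays the Case~3 recursion $[s_1,\vec s\,']\shap [t_1,\vec t\,']=I([s_1,\vec s\,']\shap [t_1-1, \vec t\,']+[s_1-1,\vec s\,']\shap [t_1, \vec t\,'])$ and declares the result to follow ``by a simple induction.'' Your strengthening to the ``leading nonnegative'' statement is exactly the bookkeeping needed to make that induction close at the boundary cases $s_1-1=0$ or $t_1-1=0$, so your write-up is if anything more complete than the paper's.
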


Again by the definition of $\shap$, we have
\begin{coro}
For leading positive basis elements $[\vec s]$ and $[\vec t]$, we have
$$([\vec s]\shap [\vec t] )\shap [0]=[0]\shap([\vec s]\shap [\vec t]).$$
\end{coro}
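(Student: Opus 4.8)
The plan is to reduce the claimed identity to a statement about leading positive \emph{basis} elements and then read it off directly from Definition~\mref{de:extsha}. By Lemma~\mref{lem:positive}, the element $[\vec s]\shap [\vec t]$ is a $\Q$-linear combination of leading positive basis elements; since $\shap$ is bilinear, it therefore suffices to prove that
$$[\vec u]\shap [0]=[0]\shap [\vec u]$$
for every leading positive basis element $[\vec u]=[u_1,\vec u\,']$ with $u_1>0$ (using the convention $\vec u\,'={\bf 1}$ when $d([\vec u])=1$).

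For the left-hand side, the pair $(u_1,0)$ lies in $R_2$, so Case~2 of Definition~\mref{de:extsha} applies, with the second factor $[0]$ of depth $1$ and hence with its ``tail'' equal to ${\bf 1}$; this gives $[\vec u]\shap [0]=[0,\,[\vec u]\shap {\bf 1}]=[0,\vec u]$, using that ${\bf 1}$ is the unit. For the right-hand side, the pair $(0,u_1)$ lies in $R_1$, so Case~1 applies, with the first factor $[0]$ of depth $1$ and hence with tail ${\bf 1}$; this gives $[0]\shap [\vec u]=[0,\,{\bf 1}\shap [\vec u]]=[0,\vec u]$. The two sides agree, so the corollary follows after summing over the leading positive basis terms of $[\vec s]\shap [\vec t]$.

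There is essentially no obstacle here: the content is carried entirely by Lemma~\mref{lem:positive}, which lets us pass from the product $[\vec s]\shap [\vec t]$ to individual leading positive basis elements, together with the observation that Cases~1 and~2 of Definition~\mref{de:extsha} both produce $[0,\vec u]$ when the other factor is $[0]$. One should only be mildly careful to note that Case~2 requires the first factor to be leading positive (which is exactly the hypothesis on $[\vec u]$) and to treat the depth-one convention $\vec u\,'={\bf 1}$ correctly, but neither point requires any real work.
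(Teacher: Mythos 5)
Your proof is correct and matches the paper's intent exactly: the paper simply asserts the corollary "by the definition of $\shap$" immediately after Lemma~\mref{lem:positive}, and your argument—reducing via that lemma and bilinearity to a single leading positive basis element $[\vec u]$, then computing both sides as $[0,\vec u]$ from Cases~1 and~2 of Definition~\mref{de:extsha}—is precisely the omitted verification. No issues.
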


We first check that $J$ is a differential operator.
\begin{lemma}
For $\vec s\in \Z^m$ and $\vec t\in \Z^n$,
$$J([\vec s]\shap [\vec t])=J([\vec s])\shap [\vec t]+[\vec s]\shap J([\vec t]).
$$
So $J$ is a differential operator since $J({\bf 1})=0$.
\mlabel{lem:der}
\end{lemma}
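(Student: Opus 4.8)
The plan is to establish the Leibniz identity
$J([\vec s]\shap[\vec t])=J([\vec s])\shap[\vec t]+[\vec s]\shap J([\vec t])$
on basis elements $[\vec s],[\vec t]$ and then extend it to all of $\calha$ by bilinearity of $\shap$ and linearity of $J$; since $J({\bf 1})=0$, the identity is then compatible with the unit as well. If $\vec s={\bf 1}$ or $\vec t={\bf 1}$ the identity is immediate, so assume $\vec s=(s_1,\vec s\,')$ and $\vec t=(t_1,\vec t\,')$ both have depth $\ge 1$.

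The key observation is that no genuine induction is required beyond the recursion already built into $\shap$: each of the ``recursive'' recursions in Definition~\mref{de:extsha} (Cases 3, 4 and 5), when read for a pair in which $J$ has lowered a leading entry, is exactly the Leibniz identity for the original pair after transposing one term. I would split into three cases exhausting the five regions of \meqref{eq:r2part}. If $s_1\le 0$ (so $(s_1,t_1)\in R_1\cup R_5$), then $J[\vec s]=[s_1-1,\vec s\,']$ has negative leading entry, so the pair $(J[\vec s],[\vec t])$ lies in $R_5$ and Case 5 of Definition~\mref{de:extsha} reads
$$J[\vec s]\shap[\vec t]=J\bigl([\vec s]\shap[\vec t]\bigr)-[\vec s]\shap J[\vec t],$$
which rearranges to the claim. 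If $s_1>0$ and $t_1\le 0$ (so $(s_1,t_1)\in R_2\cup R_4$), then $J[\vec t]=[t_1-1,\vec t\,']$ has negative leading entry, so $([\vec s],J[\vec t])$ lies in $R_4$ and Case 4 of Definition~\mref{de:extsha} reads
$$[\vec s]\shap J[\vec t]=J\bigl([\vec s]\shap[\vec t]\bigr)-J[\vec s]\shap[\vec t],$$
again rearranging to the claim. Finally, if $s_1>0$ and $t_1>0$ (so $(s_1,t_1)\in R_3$), then using $[s_1-1,\vec s\,']=J[\vec s]$, $[t_1-1,\vec t\,']=J[\vec t]$ and the linearity of $I$, the Case 3 recursion becomes $[\vec s]\shap[\vec t]=I\bigl([\vec s]\shap J[\vec t]+J[\vec s]\shap[\vec t]\bigr)$; since the product of positive-depth basis elements is again a combination of positive-depth basis elements (immediate from Definition~\mref{de:extsha}), the argument of $I$ lies in $\calia$, where $I$ and $J$ are mutually inverse, so applying $J$ to both sides gives the claim.

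I do not foresee a serious obstacle; the points to watch are purely bookkeeping. One checks that the three cases exhaust $\Z^2$ consistently with the partition $R_1\sqcup\cdots\sqcup R_5$; that the $J$-lowered pairs genuinely land in $R_5$, respectively $R_4$ — this is just the implications $s_1\le 0\Rightarrow s_1-1<0$ and $t_1\le 0\Rightarrow t_1-1<0$, so that the cited recursions apply verbatim; and, in the last case, that $I$ and $J$ are mutual inverses only on the nonunital space $\calia$ (not on $\calha$), which is why one first records the absence of a ${\bf 1}$-component in $[\vec s]\shap J[\vec t]+J[\vec s]\shap[\vec t]$. The closing assertion that $J$ is a differential operator on $(\calha,\shap)$ then follows from the basis-element identity by bilinearity, together with $J({\bf 1})=0$.
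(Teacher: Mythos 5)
Your proof is correct and follows essentially the same route as the paper's: the same three-way case split ($s_1>0,t_1>0$; $s_1>0,t_1\le 0$; $s_1\le 0$), with each case obtained by reading the defining recursion of Definition~\mref{de:extsha} for the $J$-lowered pair and rearranging, and with $J\circ I=\mathrm{id}$ on $\calia$ invoked in the first case. Your explicit remark that the argument of $I$ has no ${\bf 1}$-component (so that $J\circ I=\mathrm{id}$ applies) is a point the paper leaves implicit, but otherwise the two arguments coincide.
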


\begin{proof}
Let $\vec s=[s_1, \vec s\,']$ and $\vec t=[t_1, \vec t']$. We prove the lemma in several cases.

{\bf Case 1:} If $s_1>0, t_1>0$, then
\begin{align*}
J\big([s_1, \vec s\,']\shap [t_1, \vec t\,']\big)&=J\circ I\big([s_1, \vec s\,']\shap J([t_1, \vec t\,'])+J([s_1, \vec s\,'])\shap [t_1, \vec t\,']\big)\\
&=[s_1, \vec s\,']\shap J([t_1, \vec t\,'])+J([s_1, \vec s\,'])\shap [t_1, \vec t\,'].
\end{align*}

{\bf Case 2:} If $s_1>0$, $t_1\leq 0$, then
 $$[s_1,\vec s\,']\shap J([t_1, \vec t\,'])=J\big([s_1,\vec s\,']\shap [t_1, \vec t\,']\big)-J([s_1,\vec s\,'])\shap [t_1,\vec t\,'],$$
that is
$$J\big([s_1,\vec s\,']\shap [t_1, \vec t\,']\big)=[s_1,\vec s\,']\shap J([t_1, \vec t\,'])+J([s_1,\vec s\,'])\shap [t_1,\vec t\,'].$$

{\bf Case 3:} If $s_1\leq 0$, then
$$J([s_1,\vec s\,'])\shap [t_1,\vec t\,']=J\big([s_1, \vec s\,']\shap [t_1, \vec t\,']\big)-[s_1,\vec s\,']\shap J([t_1,\vec t\,']),
$$
that is
$$\hspace{3cm} J\big([s_1,\vec s\,']\shap [t_1, \vec t\,']\big)=[s_1,\vec s\,']\shap J([t_1, \vec t\,'])+J([s_1,\vec s\,'])\shap [t_1,\vec t\,']. \hspace{3cm} \qedhere$$
\end{proof}

\subsubsection {The associativity} We now prove that the product defined in Definition~\mref {de:extsha} is associative for basis elements. This is obvious if one element is ${\bf 1}$. So we prove the associativity for basis elements of positive depth.
\begin{lemma}\mlabel{lem:111}
For $s,t,r\in \Z$,
$$([s]\shap[t])\shap [r]=[s]\shap([t]\shap [r]).
$$

\end{lemma}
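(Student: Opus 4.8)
The plan is to study the associator $F(s,t,r):=([s]\shap[t])\shap[r]-[s]\shap([t]\shap[r])\in\calha$ and to show $F(s,t,r)=0$ for all $s,t,r\in\Z$ by a case analysis on the signs of $s,t,r$. The guiding principle is that whenever one of the three entries is negative, a single layer of the recursion in Definition~\mref{de:extsha}, together with the derivation property of $J$ (Lemma~\mref{lem:der}), rewrites $F(s,t,r)$ in terms of values of $F$ in which that entry has been moved one step closer to $0$; the remaining finitely many sign patterns with no negative entry are checked directly.

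I would first dispose of four base cases by short computations from Definition~\mref{de:extsha}. If $s=0$, then Case~1 shows that both $([0]\shap[t])\shap[r]$ and $[0]\shap([t]\shap[r])$ equal $[0,\,[t]\shap[r]]$. If $s>0$ and $t=0$, then Cases~2 and~1 show that both $([s]\shap[0])\shap[r]$ and $[s]\shap([0]\shap[r])$ equal $[0,\,[s]\shap[r]]$. If $s,t>0$ and $r=0$, then $[s]\shap[t]$ is leading positive by Lemma~\mref{lem:positive}, and prepending $0$ via Case~2 shows that both $([s]\shap[t])\shap[0]$ and $[s]\shap([t]\shap[0])$ equal $[0,\,[s]\shap[t]]$. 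Finally, if $s,t,r>0$ then all intermediate products lie in $\calib$, on which $\shap$ is the associative shuffle product of~\mcite{GZ1}. Thus $F$ vanishes when $s=0$, when $s>0$ and $t=0$, when $s,t>0$ and $r=0$, and when $s,t,r>0$.

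I would then peel off negative entries, giving priority to the first slot, then the second, then the third. For $s<0$: expanding $[s]\shap[t]$ by Case~5 and $[s]\shap\big([t]\shap[r]\big)$ termwise by Case~5, and then using Lemma~\mref{lem:der} to rewrite the single occurrence of $J$ and the expansion of $J([t]\shap[r])$, one arrives at the associator recursion
\[
F(s,t,r)=J\big(F(s+1,t,r)\big)-F(s+1,t-1,r)-F(s+1,t,r-1).
\]
As the right side at $s=-1$ involves only values of $F$ with first entry $0$, an induction on $-s\ge 1$ gives $F(s,t,r)=0$ for all $s<0$ and all $t,r$. In the same manner, expanding $[s]\shap[t]$ by Case~4 and $[t]\shap[r]$ by Case~5 gives, for $s>0$ and $t<0$,
\[
F(s,t,r)=J\big(F(s,t+1,r)\big)-F(s-1,t+1,r)-F(s,t+1,r-1),
\]
and an induction on $-t\ge 1$ --- whose base $t=-1$ lands on values of $F$ with middle entry $0$, and whose cross terms $F(s-1,\cdot,\cdot)$ fall under cases already treated --- gives $F(s,t,r)=0$ whenever $s>0$ and $t<0$. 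Finally, expanding the outer product $(-)\shap[r]$ and the inner product $[t]\shap[r]$ by Case~4 gives, for $s,t>0$ and $r<0$,
\[
F(s,t,r)=J\big(F(s,t,r+1)\big)-F(s-1,t,r+1)-F(s,t-1,r+1),
\]
and an induction on $-r\ge 1$ completes this case. Since every $(s,t,r)\in\Z^3$ lies in one of the ranges $s\le 0$, or $s>0$ and $t\le 0$, or $s,t>0$ and $r\le 0$, or $s,t,r>0$, we conclude $F\equiv 0$.

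The only point requiring genuine care is the bookkeeping of the nested inductions. Each of the three displayed recursions lowers exactly one of $-s,-t,-r$ while possibly raising the other two by $1$, so the peeling must be carried out in the fixed slot order above, and at each inductive step one must check that every cross term $F(s\pm 1,\dots)$ arising has already been shown to vanish --- because some slot has reached $0$ (a base case) or because it is a strictly smaller instance of the current induction. I also expect it to be worth remarking that the three associator recursions by themselves are insufficient: they are mutually consistent but do not pin down $F$, so Definition~\mref{de:extsha} must genuinely be fed in through the base cases. Deriving the three recursions is routine once one is careful to apply Case~5 (respectively Case~4) to products of depth-two elements and to use Lemma~\mref{lem:der} twice when collecting terms.
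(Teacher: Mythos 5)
Your proposal is correct and takes essentially the same approach as the paper: it reduces to the all-nonnegative base cases (via \mcite{GZ1} and the direct computations from Definition~\mref{de:extsha}) and then eliminates negative entries one slot at a time through the same associator recursions obtained from Cases 4--5 of the definition together with the derivation property of $J$ (Lemma~\mref{lem:der}). The only difference is presentational: you peel the slots in the order $s$, then $t$, then $r$, whereas the paper proceeds in the order $r$, then $t$, then $s$; both orders close up correctly, as your bookkeeping of the cross terms confirms.
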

\begin{proof}
Notice this is always true if $s=0$ by the definition. We prove the lemma in 4 cases, each with the previous case as the starting point.

{\bf Case 1:} If $s\ge 0$, $t\geq 0$, $r\geq 0$. This is proved in \mcite{GZ1}.

{\bf Case 2:} If $s\ge 0, t\geq 0, r\leq 0$. Let
$$Z_n=\{(s,t,r)\in \Z_{\ge 0}\times \Z_{\geq 0}\times \Z_{\leq 0} \ |\ r=-n\}.
$$
Then $\Z_{\ge 0}\times \Z_{\geq 0}\times \Z_{\leq 0}=\cup _0^\infty Z_n$. Denote
$$T:=\{(s,t,r)\in \Z_{\ge 0}\times \Z_{\geq 0}\times \Z_{\leq 0}\ |\ ([s]\shap[t])\shap [r]=[s]\shap ([t]\shap [r])\}.
$$
We now prove $Z_n\subset T$ by induction on $n\geq 0$. The initial step $Z_0\subset T$ is by Case 1.

Assume for $n\ge 0$, $Z_n\subset T$. Then consider $(s,t,r)\in Z_{n+1}$, which means $r=-(n+1)<0$. We have the following subcases.
\begin{enumerate}
\item[] {\bf Subcase 2.1:} When $s=0$, then $(0,t,r)\in T$ is always true.
\item [] {\bf Subcase 2.2:} When $s>0, t=0, r<0$, then since $[s]$ is leading positive, we have
\begin{align*}
([s]\shap [0])\shap [r]&=[0, s]\shap [r]=[0, [s]\shap  [r]]=[s]\shap [0, r]=[s]\shap ([0]\shap [r]).
\end{align*}
So $(s,0,r)$ is in $T$.
\item[] {\bf Subcase 2.3:} When $s>0, t>0, r<0$, then $(s, t, r+1), (s, t-1, r+1), (s-1, t, r+1)\in Z_n\subset T$. So
\begin{align*}
&([s]\shap [t])\shap [r]=([s]\shap [t])\shap J([r+1])\\
=&J(([s]\shap [t])\shap [r+1])-J([s]\shap [t])\shap[r+1]\\
=&J(([s]\shap [t])\shap [r+1])-([s]\shap [t-1]+[s-1]\shap [t])\shap [r+1],
\end{align*}
and
\begin{align*}
&[s]\shap ([t]\shap [r])=[s]\shap J([t]\shap [r+1])-[s]\shap ([t-1]\shap [r+1])\\
=&J([s]\shap ([t]\shap [r+1]))-[s]\shap ([t-1]\shap [r+1])-[s-1]\shap ([t]\shap[r+1]).
\end{align*}
This implies $(s,t,r)\in T$ by the induction hypothesis.
\end{enumerate}
So we always have $Z_{n+1}\subset T$. This implies that $T=\Z_{\ge 0}\times \Z_{\ge 0}\times \Z_{\le 0}$. So the lemma holds in this case.

{\bf Case 3:} If $s\ge 0$, $t\leq 0$. Define
$$Z_n:=\{(s,t,r)\in \Z_{\ge 0}\times \Z_{\leq 0}\times \Z\ |\ t=-n\}.
$$
Then
$\Z_{\ge 0}\times \Z_{\leq 0}\times \Z=\cup_{0}^{\infty}Z_n.$
Denote
$$T:=\{(s,t,r)\in \Z_{\geq 0}\times \Z_{\leq 0}\times \Z\ |\ ([s]\shap [t])\shap [r]=[s]\shap([t]\shap [r])\}.
$$
We now show $Z_n\subset T$ by induction on $n\geq 0$. As the initial step,
by Case 1 and 2, $Z_0$ is a subset of $T$. Assume $Z_n\subset T$ for $n \ge 0$ and consider $(s,t,r)\in Z_{n+1}$. So $t=-(n+1)<0$.
\begin{enumerate}
\item[] {\bf Subcase 3.1:} When $s=0$, then $(0, t, r)\in T$ is always true;
\item[] {\bf Subcase 3.2:} When $s>0, t<0$, then by Lemma \mref{lem:der}, we have
\begin{align*}
&([s]\shap [t])\shap [r]=J([s]\shap [t+1])\shap [r]-(J([s])\shap [t+1])\shap [r]\\
=&J(([s]\shap [t+1])\shap [r])-([s]\shap [t+1])\shap J([r])-(J([s])\shap [t+1])\shap [r]
\end{align*}
and
\begin{align*}
&[s]\shap ([t]\shap [r])=[s]\shap J([t+1]\shap [r])-[s]\shap ([t+1]\shap J([r]))\\
=&J([s]\shap ([t+1]\shap [r]))-J([s])\shap ([t+1]\shap [r])-[s]\shap ([t+1]\shap J([r])).
\end{align*}
\end{enumerate}
Now by the induction hypothesis, $(s,t,r)$ is in $T$. So $Z_{n+1}\subset T$. Hence $T=\Z_{\ge 0}\times \Z_{\leq 0}\times \Z$.

{\bf Case 4:} If $s\le  0$. Take
$$Z_n:=\{(s,t,r)\in \Z_{\le 0}\times \Z\times \Z\ |\ s=-n\}.
$$
Then
$\Z_{\le 0}\times \Z\times \Z=\cup_{n=0}^{\infty}Z_n.
$
Denote
$$T:=\big\{(s,t,r)\in \Z_{\leq 0}\times \Z\times \Z\ |\ ([s]\shap [t])\shap [r]=[s]\shap([t]\shap [r])\big\}.
$$
We prove $Z_n\subset T$ by induction on $n\geq 0$. We know for the start that $Z_0\subset T$ by the definition of the product. Assume $Z_n\subset T$ for $n\ge 0$. For $(s,t,r)\in Z_{n+1}$, there is $s=-(n+1)<0$.
By Lemma \mref{lem:der},
\begin{align*}
&\big([s]\shap [t]\big)\shap [r]=J\big([s+1]\shap [t]\big)\shap [r]-\big([s+1]\shap J([t])\big)\shap [r]\\
=&J\big(([s+1]\shap [t])\shap [r]\big)-\big([s+1]\shap [t]\big)\shap J([r])-\big([s+1]\shap J([t])\big)\shap [r],
\end{align*}
and
\begin{align*}
&[s]\shap\big([t]\shap [r]\big)=J\big([s+1]\shap([t]\shap [r])\big)-[s+1]\shap J\big([t]\shap [r]\big)\\
=&J\big([s+1]\shap([t]\shap [r])\big)-[s+1]\shap\big(J([t])\shap [r]\big)-[s+1]\shap\big([t]\shap J([r])\big),
\end{align*}
implying $Z_{n+1}\subset T$ by the induction hypothesis. Hence $T= \Z_{\leq 0}\times \Z\times \Z$. This finishes the proof of the lemma.
\end{proof}

\begin{prop}
\mlabel {pp:asso}
The product defined in Definition~\mref {de:extsha} is associative.
\end{prop}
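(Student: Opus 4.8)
The plan is to bootstrap from Lemma~\mref{lem:111}, which is precisely the case where all three factors have depth one, to arbitrary basis elements by induction on the total depth. Since ${\bf 1}$ is the unit, associativity is immediate when any factor equals ${\bf 1}$, so we may restrict to basis elements $[\vec s]=[s_1,\vec s\,']$, $[\vec t]=[t_1,\vec t\,']$, $[\vec r]=[r_1,\vec r\,']$ of positive depth and induct on $N:=d([\vec s])+d([\vec t])+d([\vec r])$. For $N=3$ all three depths equal $1$, which is Lemma~\mref{lem:111}; so assume $N\ge 4$ and that associativity holds for all triples of basis elements of total depth strictly less than $N$.

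For the inductive step, at depth $N$ we run the same case analysis on the leading entries $(s_1,t_1,r_1)$ as in the proof of Lemma~\mref{lem:111}, now carrying along the tails $\vec s\,',\vec t\,',\vec r\,'$. The recursions of Definition~\mref{de:extsha} split into two kinds. Cases~1 and~2 (a leading entry equal to $0$) strip that entry and turn both sides of the desired identity into bracketed associativity identities among basis elements of total depth $N-1$ -- for example $([0,\vec s\,']\shap[\vec t])\shap[\vec r]=[0,(\vec s\,'\shap[\vec t])\shap[\vec r]]$ against $[0,\vec s\,']\shap([\vec t]\shap[\vec r])=[0,\vec s\,'\shap([\vec t]\shap[\vec r])]$, and similarly for the subcases $t_1=0$ or $r_1=0$ with $s_1>0$, where Lemma~\mref{lem:positive} is used to ensure $[\vec s]\shap[\vec t]$ is again leading positive so that Case~2 applies -- and are therefore settled by the outer induction hypothesis. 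Cases~3, 4, 5 preserve depth; for them we run secondary inductions exactly as in the proof of Lemma~\mref{lem:111}, the essential inputs being that $J$ is a derivation for $\shap$ (Lemma~\mref{lem:der}) and that $J$ preserves each graded piece $\Q D_{\Z,n}$ (Lemma~\mref{lem:JStable}).

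Concretely, when $s_1,t_1,r_1$ are all positive, Lemma~\mref{lem:positive} guarantees that $[\vec s]\shap[\vec t]$ and $[\vec t]\shap[\vec r]$ remain leading positive, so both outer products are governed by Case~3; a secondary induction on $s_1+t_1+r_1$ then closes this case via the classical shuffle-associativity argument -- expanding each side by Case~3 together with the derivation rule $J([\vec s]\shap[\vec t])=J([\vec s])\shap[\vec t]+[\vec s]\shap J([\vec t])$ yields the same three terms -- with the base instances, where a leading entry drops to $0$, handled by the outer hypothesis; this is the computation in \mcite{GZ1} in the all-positive situation. When some leading entry is negative, we mirror Cases~2, 3, 4 of the proof of Lemma~\mref{lem:111}: induct on $|r_1|$ if $r_1<0$ and $s_1,t_1>0$, on $|t_1|$ if $t_1<0$ and $s_1>0$, and on $|s_1|$ if $s_1<0$. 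In each case the inductive step rewrites the offending leading entry by the corresponding $J$-recursion of Definition~\mref{de:extsha}, commutes $J$ past $\shap$ via Lemma~\mref{lem:der}, and matches the resulting terms; every inner associativity identity that appears has either strictly smaller $|r_1|$, $|t_1|$ or $|s_1|$ (covered by the secondary hypothesis), or a non-negative new leading entry (covered by an earlier subcase at depth $N$), or smaller total depth (covered by the outer hypothesis). The algebraic manipulations are verbatim those in the proof of Lemma~\mref{lem:111} with the tails appended, so nothing new needs to be checked.

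The only genuine difficulty is the bookkeeping: one must fix the order of the nested inductions -- outer on $N$, and within a fixed $N$ the subcases in the order ``$s_1=0$'', ``$s_1>0,\ t_1=0$'', ``$s_1,t_1>0,\ r_1=0$'', ``$s_1,t_1,r_1>0$'' (secondary induction on $s_1+t_1+r_1$), ``$s_1,t_1>0,\ r_1<0$'' (on $|r_1|$), ``$s_1>0,\ t_1<0$'' (on $|t_1|$), ``$s_1<0$'' (on $|s_1|$) -- and verify that every recursive call arising from Definition~\mref{de:extsha} lands strictly earlier in this well-ordering. Once this ordering is fixed, each individual verification is the same routine (if tedious) calculation already carried out in the depth-one case.
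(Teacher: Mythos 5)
Your proof follows essentially the same route as the paper's: an outer induction on the total depth with Lemma~\mref{lem:111} as the base case, reduction of the leading-zero subcases to the outer hypothesis (using Lemma~\mref{lem:positive} where a factor must be leading positive), and the same four-way sign split on $(s_1,t_1,r_1)$ with secondary inductions on $s_1+t_1+r_1$, $|r_1|$, $|t_1|$, $|s_1|$ driven by the derivation property of $J$ (Lemma~\mref{lem:der}). The only slip is attributing the all-nonnegative leading-entry case to \mcite{GZ1}: the paper explicitly notes this case is new because the tails $\vec s\,',\vec t\,',\vec r\,'$ may contain negative entries, although the computation you describe (the Case~3 expansion combined with the Leibniz rule for $J$) is exactly what the paper carries out there, so this does not affect correctness.
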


\begin{proof} We only need to prove that for  $\vec s\in \Z^m, \vec t\in \Z^p, \vec r\in \Z^\ell$ with $m, p, \ell \in \Z_{>0}$, there is
\begin {equation}
\mlabel {eq:mnl}
([\vec s]\shap [\vec t])\shap [\vec r]=[\vec s]\shap ([\vec t]\shap [\vec r]).
\end{equation}
We prove Eq.~\meqref {eq:mnl} by a double induction, firstly, we do induction on $m+p+\ell\geq 3$. By Lemma \mref{lem:111}, this holds for $m=p=\ell=1$.  Assume that Eq.~\meqref{eq:mnl} holds for $m+p+\ell=k\ge 3$. Then for $m+p+\ell=k+1$,
let $(s_1,\vec s\,')\in \Z^m, (t_1,\vec t\,')\in \Z^p,  (r_1, \vec r')\in Z^\ell$.
Here we use the convention that if $[\vec s\,'], [\vec t\,']$ or $[\vec r\,']$ is of length zero, then it is taken to be ${\bf 1}$.

First notice that, if $s_1=0$, then the conclusion is true by the induction hypothesis:
\begin{align*}
&([0,\vec s\,']\shap [t_1,\vec t\,'])\shap [r_1,\vec r']=[0, \vec s\,'\shap [t_1,\vec t\,']]\shap [r_1, \vec r']=[0,(\vec s\,'\shap[t_1, \vec t\,'])\shap [r_1,\vec r']]\\
=&[0,\vec s\,'\shap ([t_1,\vec t\,']\shap [r_1, \vec r'])]=[0,\vec s\,']\shap ([t_1,\vec t\,']\shap [r_1,\vec r']).
\end{align*}

As in Lemma  \mref{lem:111}, we prove Eq. (\mref {eq:mnl}) by dividing into four cases.

{\bf Case 1:} If $s_1\geq 0$, $t_1\geq 0$, $r_1\geq 0$. Compared with the proof of Lemma \mref{lem:111}, this is new since it is not covered by \mcite{GZ1}. Let
$$Z_n:=\{(s_1,t_1,r_1)\in \Z_{\geq 0}\times \Z_{\geq 0}\times \Z_{\geq 0}\ |\ s_1+t_1+r_1=n\},
$$
$$T:=\{(s_1,t_1,r_1)\in\Z_{\geq 0}\times \Z_{\geq 0}\times \Z_{\geq 0}\ |\ ([s_1,\vec s\,']\shap [t_1,\vec t\,'])\shap [r_1,\vec r']=[s_1,\vec s\,']\shap ([t_1,\vec t\,']\shap [r_1,\vec r'])\}.
$$
We use induction on $n\geq 0$ to prove $Z_n \subset T$. Obviously $Z_0\subset T$ since it means $s_1=0$.
Now assume $Z_n\subset T$ for $n\ge 0$. Then for $(s_1,t_1,r_1)\in Z_{n+1}$,
we proceed as follows.

\noindent
{\bf Subcase 1.1} When $s_1=0$, we already have the conclusion,

\noindent
{\bf Subcase 1.2} When $s_1>0, t_1=0$, since $[s_1, \vec s\,']$ is leading positive, there is
\begin{align*}
&\big([s_1,\vec s\,']\shap [0,\vec t\,']\big)\shap [r_1,\vec r']=\big ([0,[s_1,\vec s\,']\shap [\vec t\,']\big )\shap [r_1,\vec r']=[0,\big ( [s_1,\vec s\,']\shap [\vec t\,']\big )\shap [r_1,\vec r']]\\
=&[0,[s_1,\vec s\,']\shap \big (  [\vec t\,']\shap [r_1,\vec r']\big )]=[s_1, \vec s\,']\shap [0, [\vec t\,']\shap [r_1, \vec r']]\\
=&[s_1, \vec s\,']\shap \big ([0,\vec t\,']\shap [r_1, \vec r']\big ).
\end{align*}

\noindent
{\bf Subcase 1.3} When $s_1>0, t_1>0, r_1=0$, since $[s_1, \vec s\,']\shap[t_1, \vec t\,']$ is a combination of leading positive terms, we have
\begin{align*}
&([s_1,\vec s\,']\shap [t_1,\vec t\,'])\shap [0,\vec r']=[0, ([s_1, \vec s\,']\shap [t_1, \vec t\,'])\shap [ \vec r']]=[0, [s_1, \vec s\,']\shap ([t_1, \vec t\,']\shap [\vec r'])]\\
=&[s_1, \vec s]\shap [0, [t_1, \vec t\,']\shap [\vec r']]=[s_1, \vec s\,']\shap ([t_1, \vec t\,']\shap [0, \vec r']).
\end{align*}

\noindent
{\bf Subcase 1.4} When $s_1>0, t_1>0, r_1>0$, we have
\begin{align*}
&([s_1,\vec s\,']\shap [t_1,\vec t\,'])\shap [r_1, \vec r']=I\big(([s_1,\vec s\,']\shap [t_1,\vec t\,'])\shap [r_1-1,\vec r']+J([s_1,\vec s\,']\shap [t_1,\vec t\,'])\shap [r_1,\vec r']\big)\\
=&I(([s_1,\vec s\,']\shap [t_1,\vec t\,'])\shap [r_1-1,\vec r'])+I((J([s_1,\vec s\,'])\shap [t_1,\vec t\,'])\shap [r_1,\vec r'])\\
&+I(([s_1,\vec s\,']\shap J([t_1,\vec t\,']))\shap [r_1,\vec r']).
\end{align*}
On the other hand,
\begin{align*}
&[s_1,\vec s\,']\shap ([t_1,\vec t\,']\shap [r_1,\vec r'])=I\big([s_1,\vec s\,']\shap J([t_1,\vec t\,']\shap [r_1,\vec r']))+[s_1-1,\vec s\,']\shap ([t_1,\vec t\,']\shap [r_1,\vec r'])\big)\\
=&I([s_1,\vec s\,']\shap (J([t_1,\vec t\,'])\shap [r_1,\vec r']))+I([s_1,\vec s\,']\shap ([t_1,\vec t\,']\shap J([r_1,\vec r'])))\\
&+I([s_1-1,\vec s\,']\shap ([t_1,\vec t\,']\shap [r_1,\vec r'])).
\end{align*}
So by the induction hypothesis, we have $(s_1,t_1,r_1)\in T$.

Hence $Z_{n+1}\subset T$. Therefore $T=\Z_{\geq 0}\times \Z_{\geq 0}\times \Z_{\geq 0}$, which means that Eq. (\mref {eq:mnl}) holds in Case 1.

{\bf Case 2:} If $s_1\geq 0$, $t_1\geq 0$, $r_1\leq 0$. Denote
$$Z_n:=\{(s_1,t_1,r_1)\in \Z_{\geq 0}\times \Z_{\geq 0}\times \Z_{\leq 0}\ |\ r_1=-n\},
$$
$$T:=\{(s_1,t_1,r_1)\in\Z_{\geq 0}\times \Z_{\geq 0}\times \Z_{\leq 0}\ |\ ([s_1,\vec s\,']\shap [t_1,\vec t\,'])\shap [r_1,\vec r']=[s_1,\vec s\,']\shap ([t_1,\vec t\,']\shap [r_1,\vec r'])\}.
$$
By Case 1, $Z_0\subset T$.
By the induction hypothesis on depth and a similar proof as for Lemma \mref{lem:111} Case 2, we have $T=\Z_{\ge 0}\times \Z_{\ge 0}\times \Z_{\le 0}$.

{\bf Case 3:} If $s_1\geq 0$, $t_1\leq 0$.
Denote
$$Z_n:=\{(s_1,t_1,r_1)\in \Z_{\geq 0}\times \Z_{\leq 0}\times \Z\ |\ t_1=-n\},
$$
$$T:=\{(s_1,t_1,r_1)\in \Z_{\geq 0}\times \Z_{\leq 0}\times \Z\ |\ ([s_1,\vec s\,']\shap [t_1,\vec t\,'])\shap [r_1,\vec r']=[s_1,\vec s\,']\shap ([t_1,\vec t\,']\shap [r_1,\vec r'])\}.
$$
Similar to the proof for Lemma \mref{lem:111} Case 3 and the induction hypothesis on depth, we have $T=\Z_{\geq 0}\times \Z_{\leq 0}\times \Z$.

{\bf Case 4:} If $s_1\leq 0$.  This case follows from the induction hypothesis on depth and a similar proof as in Lemma \mref{lem:111}  Case 4.

So we have finished the proof of Eq. (\mref {eq:mnl}) and thus of the associativity of the product in Definition~\mref {de:extsha}.
\end{proof}

By Lemma \ref{lem:JStable} and Definition~\mref {de:extsha}, we conclude

\begin {coro} The  product in Definition~\mref {de:extsha} is a graded product with respect to the grading in \meqref {eq:dgrade}.
\end{coro}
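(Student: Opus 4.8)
The plan is to show, by an induction that runs in parallel with the recursion of Definition~\mref{de:extsha}, that for basis elements $[\vec s]\in D_{\Z,n}$ and $[\vec t]\in D_{\Z,m}$ the product $[\vec s]\shap[\vec t]$ lies in $\Q D_{\Z,n+m}$; by bilinearity this is precisely the graded statement $\shap\colon\Q D_{\Z,n}\otimes\Q D_{\Z,m}\to\Q D_{\Z,n+m}$ asserted by the corollary. The two facts to be used throughout are that both $I$ and $J$ preserve depth on $\calia$: for $I$ this is immediate from \eqref{eq:NewI}, since only the first entry is changed, and for $J$ it is exactly Lemma~\mref{lem:JStable}, together with $J({\bf 1})=0$ on $\calha$.

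First I would dispose of the case where one factor is ${\bf 1}$: since ${\bf 1}$ is the unit and has depth $0$, the identities ${\bf 1}\shap[\vec t]=[\vec t]$ and $[\vec s]\shap{\bf 1}=[\vec s]$ preserve depth. For both factors of positive depth I would run the very same double induction used to define $\shap$: the outer induction on the sum of depths $n+m$, and, inside Cases~3--5, an inner induction on $s_1+t_1$, on $|t_1|$, and on $|s_1|$ respectively, with Cases~1 and~2 serving as the base steps, exactly as in Definition~\mref{de:extsha}.

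In Case~1, $[0,\vec s\,']\shap[t_1,\vec t\,']=[0,\,\vec s\,'\shap[t_1,\vec t\,']]$, where $[\vec s\,']$ has depth $n-1$; by the outer induction hypothesis $\vec s\,'\shap[t_1,\vec t\,']\in\Q D_{\Z,(n-1)+m}$, and prepending the entry $0$ puts the result in $\Q D_{\Z,n+m}$. Case~2 is the mirror image, using that $[\vec t\,']$ has depth $m-1$. In Cases~3--5 the depths $n,m$ of the two factors stay fixed along the recursion, and each right-hand side is a $\Z$-linear combination of expressions of the form $I(x)$, $J(x)$, or $x$, where $x$ is a $\shap$-product of a depth-$n$ and a depth-$m$ basis element with strictly smaller value of the relevant inner parameter; the inner induction hypothesis gives $x\in\Q D_{\Z,n+m}$, and since $I$ and $J$ preserve depth the whole expression lies in $\Q D_{\Z,n+m}$. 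This closes both inductions and establishes the corollary.

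I do not expect a genuine obstacle here. The only point requiring care is that the induction scheme must mirror, verbatim, the one already shown to make $\shap$ well-defined in Definition~\mref{de:extsha}, so that no case becomes circular; once the scheme is in place, depth-preservation of $I$ and $J$ does the rest. The most tedious part is the bookkeeping in Cases~3--5, namely checking that every term appearing on the right-hand side of each recursion really is a product of a depth-$n$ element with a depth-$m$ element, but this verification is entirely mechanical.
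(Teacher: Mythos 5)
Your proposal is correct and follows exactly the route the paper intends: the paper dispatches this corollary with the one-line observation that it follows from Lemma~\mref{lem:JStable} (depth-preservation of $J$, and likewise of $I$) together with the recursive structure of Definition~\mref{de:extsha}, and your write-up simply makes that induction explicit, case by case. No discrepancy or gap.
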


\subsubsection {Uniqueness}

Suppose that $\bar \shap$ is also a product on $\calha$ satisfying the conditions in Theorem \mref{thm:Xshap}. We will prove $[\vec s]\shap [\vec t]=[\vec s] \bar \shap [\vec t]$ on basis elements $[\vec s]$ and $[\vec t]$
by induction on the sum of depths. If one element is ${\bf 1}$, it is obvious. So we only need to prove it for basis elements with positive depth. We put the initial step into a lemma to serve as a prototype for other similar proofs later in the paper.

\begin{lemma}
\mlabel{lem:11}
For associative products $\shap$ and $\bar \shap$ in $\calha$ satisfying the conditions in Theorem \mref{thm:Xshap}, and any $s,t\in \Z$, we have
$$[s]\shap [t]=[s]\bar \shap [t].$$
\end{lemma}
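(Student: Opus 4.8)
The plan is to show that conditions (i)--(iii) of Theorem~\mref{thm:Xshap}, together with the fact that any such product is graded for the depth grading~\meqref{eq:dgrade}, already determine $[s]\shap[t]$ for every pair of depth-one basis elements; hence $[s]\shap[t]=[s]\bar\shap[t]$. I would organize the argument through the five regions $R_1,\dots,R_5$ of Definition~\mref{de:extsha}, in that order, since the recursion in each later region is built on the earlier ones.

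In the two ``boundary'' regions the value is handed to us directly. If $s=0$ (region $R_1$), condition (i) gives $[0]\shap[t]=[0,t]=[0]\bar\shap[t]$. If $s>0$ and $t=0$ (region $R_2$), then $(s)\in\Z_{>0}$, so condition (ii) gives $[s]\shap[0]=[0,s]=[s]\bar\shap[0]$.

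For the remaining three regions I would use condition (iii). By gradedness $[s]\shap[t]\in\Q D_{\Z,2}$, and by Lemma~\mref{lem:JStable} the operator $J$ is invertible on $\Q D_{\Z,2}$, with inverse the operator $I$ of~\meqref{eq:NewI}. Feeding depth-one inputs into the derivation identity $J(a\shap b)=J(a)\shap b+a\shap J(b)$ provided by condition (iii), and using $J([a])=[a-1]$, gives
$$J\big([s]\shap[t]\big)=[s-1]\shap[t]+[s]\shap[t-1],$$
and hence, after inverting $J$ or after first shifting one argument, the three identities
$$[s]\shap[t]=I\big([s-1]\shap[t]\big)+I\big([s]\shap[t-1]\big)\qquad(s>0,\ t>0),$$
$$[s]\shap[t]=J\big([s]\shap[t+1]\big)-[s-1]\shap[t+1]\qquad(s>0,\ t<0),$$
$$[s]\shap[t]=J\big([s+1]\shap[t]\big)-[s+1]\shap[t-1]\qquad(s<0).$$
These are exactly the recursions of Cases 3--5 of Definition~\mref{de:extsha} restricted to depth one, and they were derived using only (i)--(iii), gradedness, and Lemma~\mref{lem:JStable}; so they hold verbatim with $\shap$ replaced by $\bar\shap$, while $I$ and $J$ are fixed operators that respect any equalities supplied by an induction hypothesis. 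One then closes the argument by induction on $s+t$ in the first of these regions, on $|t|$ in the second, and on $|s|$ in the third: in each case every product of depth-one elements on the right-hand side lies in an already-treated region or is a strictly smaller instance, so it agrees for $\shap$ and $\bar\shap$, and therefore so does the left-hand side.

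I expect the only real work to be the bookkeeping behind the last sentence. For $s,t>0$: if $s-1=0$ or $t-1=0$ the term falls into $R_1$ or $R_2$, and otherwise $(s-1)+t<s+t$ or $s+(t-1)<s+t$. For $s>0,\ t<0$: since $t<0$ forces $t+1\le 0$, the term $[s]\shap[t+1]$ is in $R_2$ (if $t+1=0$) or has smaller $|t|$, and $[s-1]\shap[t+1]$ lies in $R_1$, $R_2$, or has smaller $|t|$. For $s<0$: both $[s+1]\shap[t]$ and $[s+1]\shap[t-1]$ have first entry $\le 0$, hence lie in $R_1$ or have smaller $|s|$. There is no conceptual obstacle beyond keeping these cases straight, which is presumably why the authors isolate this depth-one statement as a prototype for the longer uniqueness argument at higher depths; the one external ingredient, invertibility of $J$ on each depth component $\Q D_{\Z,n}$, is exactly Lemma~\mref{lem:JStable}.
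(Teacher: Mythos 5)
Your proposal is correct and takes essentially the same route as the paper's proof: conditions (i) and (ii) settle the regions $s=0$ and $s>0,\ t=0$, and the derivation property of $J$ together with its invertibility on $\Q D_{\Z,2}$ (Lemma~\mref{lem:JStable}) yields exactly the three recursions you write, which are then closed by inductions on $s+t$, $|t|$ and $|s|$. The only cosmetic difference is that the paper merges your regions $R_1,R_2,R_3$ into a single case $s\ge 0,\ t\ge 0$ with induction on $s+t$, and phrases the $R_3$ step as ``apply $J$ to both products and invoke injectivity'' rather than explicitly inverting $J$ via $I$.
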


\begin{proof} By the first condition of the products $\shap$ and $\bar \shap$ in Theorem~\mref{thm:Xshap}, this is true for $s=0$ and $t\in \Z$.
The rest of the proof is divided into three cases.

\noindent
{\bf Case 1. } If  $s\geq 0$, $t\geq 0$,  let
$$Z_n:=\{(s, t)\in \Z_{\ge 0}\times \Z_{\ge 0} \ \ | \ s+t=n\}
$$
$$T:=\{(s, t)\in \Z_{\ge 0}\times \Z_{\ge 0} \ | \ [s]\shap [t]=[s]\bar \shap [t]\}.$$ Obviously $Z_0\subset T$.
Assume $Z_n\subset T$. Then for $(s,t)\in Z_{n+1}$, we consider the following subcases.
\begin{enumerate}
\item[] {\bf Subcase 1.1:} When $s=0$ or $t=0$, we have
$[0]\shap [t]=[0, t]=[0]\bar \shap [t]$ by the first condition in Theorem~\mref{thm:Xshap} as mentioned above, and $[s]\shap [0]=[0, s]=[s]\bar \shap [0]$ by the second condition in Theorem~\mref{thm:Xshap}.
So $(0,t), (s,0)\in T$.
\item[] {\bf Subcase 1.2:} When $s\geq 1$ and $t\geq 1$, we have
$$
J([s]\shap [t])=[s]\shap [t-1]+[s-1]\shap [t]
=[s]\bar \shap [t-1]+[s-1]\bar \shap [t]=J([s]\bar \shap [t]).
$$
\end{enumerate}
Now both $[s]\shap [t]$ and $[s]\bar \shap [t]$ are in $\Q D_{\Z , 2}$. So Lemma \mref {lem:JStable} gives
$$[s]\shap [t]=[s]\bar \shap [t].$$
Thus $(s, t)$ is in $T$, which means $Z_{n+1}\subset T$. Thus $T=\Z_{\ge 0}\times \Z_{\ge 0}$.

\noindent
{\bf Case 2. } If $s\geq 0$, $t\leq 0$,
let
$$Z_n:=\{(s,t)\in \Z_{\geq 0}\times \Z_{\leq 0} \ \ | \ t=-n\},
$$
$$T:=\{(s,t)\in \Z_{ \geq 0}\times \Z_{\leq 0} \ | \ [s]\shap [t]=[s]\bar \shap [t]\}.$$
Then $Z_0\subset T$ by the first case.
Assume $Z_n\subset T$. Then for $(s,t)\in Z_{n+1}$, we know $t<0$. Then
\begin{enumerate}
\item[] {\bf Subcase 2.1:} When $s=0$, this is already covered at the beginning of the proof;
\item[] {\bf Subcase 2.2:} When $s\ge 1$, we have
\begin{align*}
&[s]\shap [t]=[s]\shap J([t+1])=J([s]\shap [t+1])-J([s])\shap [t+1]\\
=&J([s]\bar \shap [t+1])-J([s])\bar \shap [t+1]=[s]\bar \shap [t].
\end{align*}
So $(s, t)\in T$, and thus $Z_{n+1}\subset T$. So the induction shows $T=\Z_{\geq 0}\times \Z_{\leq 0}$.
\end{enumerate}

\noindent
{\bf Case 3. } If $s\leq 0$, let
$$Z_n:=\{(s,t)\in \Z_{\leq 0}\times \Z \ \ | \ s=-n\},
$$
$$T:=\{(s,t)\in \Z_{\leq 0}\times \Z \ | \ [s]\shap [t]=[s]\bar \shap [t]\}.$$
We have $Z_0\subset T$ by above cases. Assume that $Z_n\subset T$. Then for $(s,t)\in Z_{n+1}$, we have $s\leq -1$. So
\begin{align*}
&[s]\shap [t]=J([s+1])\shap [t]=J([s+1]\shap [t])-[s+1]\shap J([t])\\
=&J([s+1]\bar \shap [t])-[s+1]\bar \shap J([t])=[s]\bar \shap [t].
\end{align*}
So $(s, t)\in T$ and hence  $T=\Z_{\leq 0}\times \Z$. Thus we have the conclusion.
\end{proof}

\begin{prop}
\mlabel{pp:nn} The product that satisfies conditions in Theorem \mref {thm:Xshap} is unique.
\end{prop}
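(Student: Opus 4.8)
The plan is to promote Lemma~\mref{lem:11} from pairs of depth-one elements to arbitrary basis elements by a double induction, organized exactly as the associativity proof (Proposition~\mref{pp:asso}). Write $[\vec s]=[s_1,\vec s\,']$ of depth $m$ and $[\vec t]=[t_1,\vec t\,']$ of depth $p$, with the convention that $\vec s\,'$ or $\vec t\,'$ is ${\bf 1}$ when $m=1$ or $p=1$. If one factor is ${\bf 1}$ the equality $[\vec s]\shap[\vec t]=[\vec s]\bar\shap[\vec t]$ is trivial, so the outer induction runs on $m+p\geq 2$, the base case $m=p=1$ being Lemma~\mref{lem:11}. For the inductive step I assume $[\vec u]\shap[\vec v]=[\vec u]\bar\shap[\vec v]$ whenever $d([\vec u])+d([\vec v])<m+p$.

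First I would dispose of $s_1=0$: from $[0,\vec s\,']=[0]\shap[\vec s\,']$, associativity of $\shap$, and condition (i) of Theorem~\mref{thm:Xshap} one obtains $[0,\vec s\,']\shap[t_1,\vec t\,']=[0,\ \vec s\,'\shap[t_1,\vec t\,']]$, and the identical computation for $\bar\shap$ gives $[0,\vec s\,']\bar\shap[t_1,\vec t\,']=[0,\ \vec s\,'\bar\shap[t_1,\vec t\,']]$; since $d([\vec s\,'])+d([t_1,\vec t\,'])=m+p-1$, the outer hypothesis closes this case. With $s_1=0$ settled I would split into the three sign cases of Lemma~\mref{lem:11}. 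In {\bf Case 1} ($s_1,t_1\geq 0$) the inner induction is on $s_1+t_1$, the value $0$ being the already-treated case $s_1=0$. When $t_1=0$ and $s_1>0$, the element $[s_1,\vec s\,']$ is leading positive, and associativity of $\bar\shap$ together with conditions (i),(ii) reduces $[s_1,\vec s\,']\bar\shap[0,\vec t\,']$ to $[0,\ [s_1,\vec s\,']\bar\shap\vec t\,']$, matching the Case~2 formula of Definition~\mref{de:extsha} for $\shap$, with the total depth now strictly smaller. When $s_1,t_1\geq 1$ — the one place the argument genuinely needs injectivity of $J$ — I apply $J$, a differential operator for both products by Lemma~\mref{lem:der} and condition (iii), obtaining
\[
J([\vec s]\shap[\vec t])=[s_1-1,\vec s\,']\shap[\vec t]+[\vec s]\shap[t_1-1,\vec t\,'],
\]
and the same identity for $\bar\shap$; the right-hand sides agree by the inner hypothesis, so $J([\vec s]\shap[\vec t])=J([\vec s]\bar\shap[\vec t])$, and since both products are graded both sides lie in $\Q D_{\Z,m+p}$, on which $J$ is injective (Lemma~\mref{lem:JStable}); hence they coincide.

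{\bf Case 2} ($s_1\geq 0$, $t_1\leq 0$) runs an inner induction on $-t_1$, with base $t_1=0$ supplied by Case~1; for $t_1<0$ and $s_1\geq 1$, applying the Leibniz rule for $J$ to $[\vec s]\shap[t_1+1,\vec t\,']$, and separately to the corresponding $\bar\shap$ product, reproduces for each product the Case~4 recursion of Definition~\mref{de:extsha}, and the inner hypothesis in $-t_1$ identifies the two, with no appeal to injectivity of $J$. {\bf Case 3} ($s_1\leq 0$) is entirely parallel: an inner induction on $-s_1$, base from Cases~1--2, and for $s_1<0$ the Case~5 recursion of Definition~\mref{de:extsha} together with the Leibniz rule for $J$ for both products gives the equality directly. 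This exhausts all sign patterns of $(s_1,t_1)$ and completes the outer induction.

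The substance here is bookkeeping rather than new ideas, and that bookkeeping is the main obstacle. The delicate points are: to check at each reduction that the quantity steering the relevant inner induction ($s_1+t_1$, or $-t_1$, or $-s_1$) strictly decreases while the pair of leading entries stays within the region that case controls, or else drops to a boundary case already handled; to avoid using commutativity, which neither $\shap$ nor $\bar\shap$ possesses; and to confirm that $\bar\shap$ — assumed only associative, graded, compatible with the differential operator $J$, and satisfying conditions (i),(ii) — obeys the very same recursions as $\shap$, so those identities can be quoted for $\bar\shap$ word for word. Here it helps that the recursions of Definition~\mref{de:extsha} for the regions $R_4,R_5$ are mere rearrangements of the Leibniz rule for $J$, and the $R_3$ case uses the Leibniz rule together with graded injectivity of $J$ directly, so the operator $I$ plays no role in the uniqueness argument at all. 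With uniqueness thus established, it together with Proposition~\mref{pp:asso} and the construction in Definition~\mref{de:extsha} completes the proof of Theorem~\mref{lem:Inverse}.
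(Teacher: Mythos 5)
Your proposal is correct and follows essentially the same route as the paper: the paper's proof takes Lemma~\mref{lem:11} as the base case of an induction on the sum of depths and then states that the inductive step is carried out ``in a way similar to the proof of Lemma~\mref{lem:11} for $s_1$ and $t_1$,'' which is exactly the three-region case analysis (inner inductions on $s_1+t_1$, on $-t_1$, and on $-s_1$, with graded injectivity of $J$ invoked only in the region $R_3$) that you spell out. Your writeup merely makes explicit the details the paper leaves to the reader, including the reduction of the $t_1=0$ subcase via associativity and conditions (i)--(ii).
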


\begin{proof} If there are two products $\shap$ and $\bar \shap$ that  satisfy the conditions in Theorem \mref{thm:Xshap}, we only need prove that, for $\vec s\in \Z^m$, $\vec t\in \Z^n$ with $m,n\geq 1$, there is
\begin {equation}
\mlabel {eq:mn}
[\vec s]\shap [\vec t]=[\vec s]\bar \shap [\vec t].
\end{equation}

This is true for $m+n=2$ by Lemma \mref{lem:11}.  Assume Eq. (\mref {eq:mn}) holds for $m+n\leq k$. Let $[s_1,\vec s\,']\in \Z^{m}$, $[t_1, \vec t\,']\in \Z^{n}$ for $m+n=k+1$.  We can inductively prove $[s_1, \vec s\,']\shap [t_1, \vec t\,']=[s_1, \vec s\,']\bar \shap[t_1,\vec t\,']$ in a way  similar to the proof of Lemma \mref{lem:11} for $s_1$ and $t_1$.  This finishes the proof of uniqueness.
\end{proof}

\subsection {Subalgebras}

From  \mcite{GZ1},  we  know  that  $(\calhb, \shap )$  and  $(\calhd, \shap)$  are  two  subalgebras  of  $(\calha, \shap)$. There is another subalgebra which plays an important role in our subsequent work on the duality of Hopf algebras \mcite {GXZ}.

\begin{prop}
The triple $(\calhc, \shap, J)$  is  a  differential subalgebra  of  $(\calha, \shap, J)$.
\end{prop}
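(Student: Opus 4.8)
The plan is to check the two conditions that define a differential subalgebra: that $\calhc$ is closed under the extended shuffle product $\shap$, and that it is closed under the derivation $J$. The containment of the unit $\mathbf 1$ and the fact that $\calhc$ is a subspace are automatic, since $\calhc$ is by definition the span of $\mathbf 1$ together with the basis elements $[\vec s]$ with $\vec s\in\Z_{\le 0}^k$, $k\ge 1$.

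Closure under $J$ I would dispatch first, and it is immediate: by \meqref{eq:jmap} we have $J([s_1,s_2,\ldots,s_k])=[s_1-1,s_2,\ldots,s_k]$, so if all $s_i\le 0$ then $s_1-1\le -1\le 0$ while the other entries are unchanged; together with $J(\mathbf 1)=0$ this gives $J(\calhc)\subseteq\calhc$.

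The substance is closure under $\shap$. Since $\mathbf 1$ is the unit, it suffices to prove $[\vec s]\shap[\vec t]\in\calhc$ for $\vec s\in\Z_{\le 0}^m$, $\vec t\in\Z_{\le 0}^n$ with $m,n\ge 1$, and I would do this by a double induction: a primary induction on the total depth $m+n$, and within each level a secondary induction on $|s_1|$, where $\vec s=(s_1,\vec s\,')$ and $\vec t=(t_1,\vec t\,')$. Because $s_1\le 0$, only Cases 1 and 5 of Definition~\mref{de:extsha} can occur. In Case 1 ($s_1=0$), $[0,\vec s\,']\shap[t_1,\vec t\,']=[0,\vec s\,'\shap[t_1,\vec t\,']]$; the inner product (read as $[t_1,\vec t\,']$ when $\vec s\,'=\mathbf 1$) has strictly smaller total depth and both of its factors are again in $\calhc$, so it lies in $\calhc$ by the primary induction, and prepending a $0$ stays in $\calhc$. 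In Case 5 ($s_1<0$), $[s_1,\vec s\,']\shap[t_1,\vec t\,']=J\big([s_1+1,\vec s\,']\shap[t_1,\vec t\,']\big)-[s_1+1,\vec s\,']\shap[t_1-1,\vec t\,']$; here $s_1+1\le 0$ and $t_1-1\le -1\le 0$, so $[s_1+1,\vec s\,']$ and $[t_1-1,\vec t\,']$ still lie in $\calhc$, and each of the two products on the right is evaluated either via Case 1 (when $s_1+1=0$, covered by the primary induction) or again via Case 5 with $|s_1+1|=|s_1|-1$ (covered by the secondary induction); applying $J$, which preserves $\calhc$ by the previous paragraph, yields $[s_1,\vec s\,']\shap[t_1,\vec t\,']\in\calhc$. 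This closes both inductions.

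Finally I would assemble the conclusion: $\calhc$ contains $\mathbf 1$, is stable under $\shap$ and under $J$, and $(\calha,\shap,J)$ is a differential algebra by Theorem~\mref{lem:Inverse} together with Lemma~\mref{lem:der}, so $(\calhc,\shap,J)$ is a differential subalgebra of $(\calha,\shap,J)$. The one point requiring care is that the Case 5 recursion does not decrease the total depth; this is precisely why the auxiliary induction on $|s_1|$ is needed, along with the observation that iterating that recursion eventually lands in Case 1, where the depth does drop. Everything else is a direct unwinding of Definition~\mref{de:extsha}.
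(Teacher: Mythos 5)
Your proof is correct and follows essentially the same route as the paper: closure under $J$ is immediate from the definition, and closure under $\shap$ is established by a double induction on the total depth and on $|s_1|$, with the $s_1=0$ case handled by the depth induction and the $s_1<0$ case reduced via the Case~5 recursion (which is exactly the derivation identity for $J$ that the paper invokes as Lemma~\mref{lem:der}). No gaps.
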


\begin{proof}
Obviously $\calhc$ is closed under the operator $J$. So we only need to prove that $\calhc$  is  closed  under $\shap$.
It is obvious that  for  $a\in \calhc$,
$$a\shap {\bf1}={\bf1}\shap a=a\in \calhc.$$

Now we use a double induction to prove that for  $\vec s\in \Z_{\le 0}^m$ and $\vec t\in \Z_{\le 0}^k$, the product $[\vec s]\shap [\vec t]$ is in $\calhc.$
We first use induction on the sum of depths $\ell:=d([\vec s])+d([\vec t])=m+k\geq 2$ of $[\vec s]$ and $[\vec t]$.

For the initial step of $\ell=2$, we have $m=k=1$. We apply induction on the absolute value $|s|$ to prove that $[s]\shap [t]\in \calhc$. The case for $s=0$ is obvious: $[0]\shap [t]=[0, t]\in\calhc$. Assume that the conclusion holds for $|s|=n>0$, that is $[-n]\shap [t]\in\calhc$. Then we consider $|s|=n+1$. By the induction  hypothesis and the stability of $J$ on $\calhc$, we have
\begin{align*}
[-n-1]\shap [t]=J([-n])\shap [t]=J([-n]\shap [t])-[-n]\shap J([t])\in\calhc.
\end{align*}

Now let $\ell\geq 2$ and assume that $[\vec s]\shap [\vec t]\in\calhc$ for $m+k=\ell\ge 2$ $(m>0,k>0)$. Then for $m+k=\ell+1$ $(m>0, k>0)$, denote $[\vec s]=[s_1, \vec s\,']$. We apply induction on $|s_1|$, by the induction hypothesis on depth, we have
\begin{align*}
[0, \vec s\,']\shap [\vec t]=[0, \vec s\,'\shap \vec t]\in\calhc.
\end{align*}
Assume that the conclusion holds for $|s_1|=n>0$, that is $[-n, \vec s\,']\shap [\vec t]\in\calhc$. Then we consider $|s_1|=n+1$. By the induction  hypothesis on $|s_1|=n$ and the stability of $J$ on $\calhc$, we have
\begin{align*}
[-n-1, \vec s\,']\shap [\vec t]=J([-n, \vec s\,'])\shap [\vec t]=J([-n, \vec s\,']\shap [\vec t])-[-n, \vec s\,']\shap J([\vec t])\in\calhc.
\end{align*}
Hence we have completed the proof.
\end{proof}

\section{The locality algebra of Chen symbols}
\mlabel{s:loc}
Now that we have defined the extended shuffle product, we want to show it is the structure underlying the multiple zeta series at convergent integer points, with arbitrary arguments. For this we need prove that the algebra corresponding to convergent integer points form a subalgebra, and taking the multiple zeta series defines an algebra homomorphism. To do this, we need to study the multiple zeta series in an abstract context.

\subsection{Locality algebras} Locality structures are important in exploring hidden structures. We first recall some backgrounds on locality structures from \mcite{CGPZ,CGPZ2,GPZ3}.

\begin{enumerate}
  \item  A locality set is a  couple $(X, \top)$ where $X$ is a set and
	$$ \top:= X\times_\top X \subseteq X\times X$$
	is a binary symmetric relation, which is called a {\bf locality relation} on $X$. For $x_1, x_2\in X$,
	denote $x_1\top x_2$ if $(x_1,x_2)\in \top$. For a subset $U\subset X$, the {\bf  {polar} subset} of $U$ is
\begin{equation*}
U^\top:= \{x\in X\,|\, (x, u)\in \top, \forall u\in U \}.  			
\end{equation*}
\item
For locality sets $(X,\top_X)$ and $(Y,\top_Y)$, $f, g : X\to Y$ is called {\bf mutually independent}, if
\begin{equation*} \mlabel{eq:locmap}
x_1\top_X x_2 \Longrightarrow f(x_1)\top_Y g(x_2), \quad \forall x_1, x_2\in X.
\end{equation*}
A map $f:X\to Y$ is called a {\bf locality map} if $f$ and $f$ are mutually independent.
  \item
A {\bf locality vector space} is a vector space $V$ equipped with a locality relation $\top$ which is compatible with the linear structure on $V$ in the sense that, for any  subset $X$ of $V$, $X^\top$ is a linear subspace of $V$.
\item A {\bf locality  algebra} over a 26 field $K$ is a locality vector space $(A,\top)$ over $K$ together with a map
$$ m_A: A\times_\top A \to A,
(u,v)\mapsto  u\cdot v=m_A(x,y)$$
which is locality bilinear in the sense that,
for $u, v, w\in A$ with $u\top w, v\top w$, then
$$ (u+v)\cdot w = u\cdot w + v\cdot w, \quad   w\cdot (u+v) = w\cdot u+w\cdot v, $$
$$ (ku)\cdot w =k(u\cdot w),\quad  u\cdot (kw)=k(u\cdot w), \ k\in K.$$
$m_A$ also satisfies the locality associativity: for $u, v, w\in A$ with $u\top v, u\top w, v\top w$, then
$$	
(u\cdot v) \top w,\quad  u\top (v\cdot w), \quad (u\cdot v) \cdot w = u\cdot (v\cdot w). 	
$$
\item A {\bf unitary locality algebra} is a locality algebra $(A,\top, m_A)$ with a unit $1_A$ such that $1_A\top u$ for each $u\in A$, and
$$1_A\cdot u=u\cdot 1_A=u.$$
\item Let $(A,m,\top_A)$ be a locality algebra. A locality subspace $(B, \top _B)$ of $A$ is called a {\bf locality subalgebra of $A$} if,
\vspace{-.2cm}	
$$\top_B:= \top_A \cap (B\times B)
\vspace{-.2cm}	
$$
and
$$m(\top _B)\subset B.
$$
\item A locality algebra $A$ with a grading $A=\oplus_{n\geq 0}A_n$ is called a {\bf locality graded algebra} if $m_A((A_m\times A_n)\cap \top_A) \subseteq A_{m+n}$ for all $m, n\in \Z _{\ge 0}$. The locality graded algebra is called {\bf connected} if $A_0=K$.
\item Given two locality algebras $(A_i, \top_i), i=1,2$,
a (resp. {\bf unitary}) {\bf locality algebra homomorphism} is a linear map   $\varphi:A_1\longrightarrow A_2$  such that
$a  \top_1 b$ implies $\varphi(a)\top_2\varphi(b)$ and
	$\varphi(a \cdot b)=\varphi(a )\cdot \varphi(b)$ for $a\top _1b$
(resp. and $\varphi(1_{A_1})=1_{A_2}$).
\end{enumerate}

\subsection{Generalized Chen fractions}
 Thanks to \mcite{GPZ2}, for the variables set
$\{x_i\}_{i\in \Z _{\ge 1}}^k, k\geq 1$, Chen fractions can be defined for any $\vec s \in \Z _{\ge 1}^k$. We generalize the definition to all $\vec s \in \Z ^k$.

\begin {defn}
\mlabel {defn:ChenF}
For a finite nonempty subset $K=\{i_1, \cdots, i_k\} \subset \Z _{\ge 1}$ with $k$ elements,  a {\bf generalized Chen fraction} with variables in  $\{x_j\ | j\in K\}$ is
a fraction of the form
$$\frac{1}{(x_{i_{1}}+x_{i_{2}}+\cdots+x_{i_{k}})^{s_1}(x_{i_{2}}+\cdots+x_{i_{k}})^{s_2}\cdots x_{i_{k}}^{s_k}},
$$
where $\vec s =(s_1, \cdots, s_k)\in \Z ^k$. We  also use the notation
$$f(\wvec{s_1, \cdots, s_k}{x_{i_{1}},\cdots, x_{i_{k}}})$$
to denote it.
Let $\fch$  be  the  set of  all generalized Chen fractions in any finite nonempty subset of $\{x_i\}_{i\in \Z _{\ge 1}}$, and $\Q \fch$ be the  subspace of  functions in variables  $\{x_i\}_{i\in \Z _{\ge 1}}$  that is linearly spanned by  $\fch$ over $\Q$.
\end{defn}

\begin {remark} By definition,
$$f(\wvec {0}{x_i})=1.
$$
Thus the elements of $\fch $ are not linearly independent. This is the key difference between generalized Chen fractions and Chen fractions in \mcite{GPZ2}.
\end {remark}

By definition, we have
\begin {lemma} There are recursive formulas for generalized Chen fractions:
\begin {equation}
\mlabel {eq:indchen}
\begin{split}
(x_{i_1}+\cdots +x_{i_k})f\wvec{s_1, \cdots, s_k}{x_{i_1}, \cdots, x_{i_k}}&=f\wvec{s_1-1, \cdots, s_k}{x_{i_1},\ \ \ \ \  \cdots, x_{i_k}},
\\
(x_{i_1}+\cdots +x_{i_k})^{-1}f\wvec{s_1, \cdots, s_k}{x_{i_1}, \cdots, x_{i_k}}&=f\wvec{s_1+1, \cdots, s_k}{x_{i_1},\ \ \ \ \  \cdots, x_{i_k}}.
\end{split}
\end{equation}
\end{lemma}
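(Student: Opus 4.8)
The plan is to obtain both identities by direct inspection of the defining expression in Definition~\mref{defn:ChenF}. I would work in the field of rational functions $\Q(x_j \mid j \in K)$, so that the denominator factors $x_{i_1}+\cdots+x_{i_k},\ x_{i_2}+\cdots+x_{i_k},\ \ldots,\ x_{i_k}$ are invertible and expressions with arbitrary (possibly negative) integer exponents make sense; the ordered tuple $(i_1,\ldots,i_k)$ is fixed throughout, so that the first partial sum $x_{i_1}+\cdots+x_{i_k}$ is unambiguous.

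For the first formula, write
$$f\wvec{s_1,\cdots,s_k}{x_{i_1},\cdots,x_{i_k}} = \frac{1}{(x_{i_1}+\cdots+x_{i_k})^{s_1}}\cdot \frac{1}{(x_{i_2}+\cdots+x_{i_k})^{s_2}\cdots x_{i_k}^{s_k}}.$$
Multiplying on the left by the single factor $x_{i_1}+\cdots+x_{i_k}$ affects only the first factor of the denominator, turning $(x_{i_1}+\cdots+x_{i_k})^{s_1}$ into $(x_{i_1}+\cdots+x_{i_k})^{s_1-1}$ and leaving every other factor unchanged; by definition this is exactly the displayed right-hand side. The second formula is the same computation with $x_{i_1}+\cdots+x_{i_k}$ replaced by its inverse, raising the first exponent by $1$; equivalently, it is the first formula applied to $(s_1+1,s_2,\ldots,s_k)$ after dividing both sides by $x_{i_1}+\cdots+x_{i_k}$. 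One may also note the two formulas are inverse to each other, so proving either one suffices.

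There is no genuine obstacle here: the statement is immediate once one agrees to read these symbols as honest rational functions rather than as purely formal objects, and one checks the degenerate case $k=1$ (where $f\wvec{s_1}{x_{i_1}}=x_{i_1}^{-s_1}$) reduces to the same monomial identity. The only point worth making explicit, as is already signalled in the preceding remark by $f\wvec{0}{x_i}=1$, is that this interpretation is precisely what forces relations such as these among the elements of $\fch$, and hence is responsible for their failure to be linearly independent.
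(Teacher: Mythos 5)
Your proposal is correct and matches the paper's treatment: the paper offers no argument beyond ``By definition, we have,'' which is precisely your observation that, read as rational functions in $\Q(x_j \mid j\in K)$, multiplying or dividing by the first linear form $x_{i_1}+\cdots+x_{i_k}$ simply shifts the exponent $s_1$ by $\mp 1$ while leaving the remaining factors untouched. Your added remarks on the $k=1$ case and on this interpretation being the source of the linear dependencies in $\fch$ are consistent with the remark the paper places just before the lemma.
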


It is shown in \mcite {GPZ1} that the space $\calm $ of meromorphic germs with linear poles at zero with rational coefficients, together with the orthogonality locality relation $\perp $ induced by the standard inner product in $\R ^\infty$, is a locality algebra.
Since $\Q \fch$ is a subspace of $\calm $, the restriction  $\perp$   introduces a locality relation $\top $ on $\Q \fch$. This locality relation $\top $ can be described as follows: for two generalized Chen fractions  $g$ and $h$ in variables $\{x_i\ | i\in K\}$ and $\{x_j\ | j\in M\}$ with $K, M\subset \Z_{\ge 1}$, respectively, $g\top h$ if and only if $K\cap M=\emptyset$.

\begin{prop}
$(\Q \fch, \top, \cdot)$ is a locality algebra.
\end{prop}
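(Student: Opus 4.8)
The strategy is to inherit essentially all of the locality-algebra axioms from the ambient locality algebra $(\calm,\perp,\cdot)$ and to localize the genuine work in a single closure statement. Recall from \mcite{GPZ1} that $(\calm,\perp,\cdot)$ is a locality algebra, and that by construction $\Q\fch$ is a linear subspace of $\calm$ on which $\top$ is the restriction of $\perp$. Consequently, for any $X\subseteq\Q\fch$ we have $X^\top=X^\perp\cap\Q\fch$, an intersection of two linear subspaces of $\calm$, hence a linear subspace of $\Q\fch$; thus $(\Q\fch,\top)$ is a locality vector space. The locality bilinearity, the scalar compatibilities, and (modulo the closure statement below) the locality associativity are simply the corresponding identities of $\calm$ read off on elements of $\Q\fch$. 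Therefore it suffices to show that $\Q\fch$ is a locality subalgebra of $(\calm,\perp,\cdot)$, i.e. that $m_\calm(\top_{\Q\fch})\subseteq\Q\fch$, since a locality subalgebra of a locality algebra is again a locality algebra. (Note in passing that $f(\wvec{0}{x_i})=1\in\Q\fch$ is a unit with $1\top g$ for every $g$, so the resulting object is even a unitary locality algebra.)

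The closure statement reduces, by bilinearity, to the following: for generalized Chen fractions $g=f(\wvec{s_1,\cdots,s_k}{x_{i_1},\cdots,x_{i_k}})$ and $h=f(\wvec{t_1,\cdots,t_l}{x_{j_1},\cdots,x_{j_l}})$ with $\{i_1,\cdots,i_k\}\cap\{j_1,\cdots,j_l\}=\emptyset$, the product $g\cdot h$ lies in $\Q\fch$. Write $A=x_{i_1}+\cdots+x_{i_k}$, $B=x_{j_1}+\cdots+x_{j_l}$ and $C=A+B$ (the sum of all the variables involved). I would prove this by induction on the total depth $k+l$, with an inner induction whose case split on the signs of $(s_1,t_1)$ follows the five regions $R_1,\cdots,R_5$ of Definition~\mref{de:extsha}. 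When $s_1=0$ or $t_1=0$ the leading factor is $1$ and $g$ or $h$ is literally a generalized Chen fraction of strictly smaller depth, so the outer induction applies. When $s_1>0$ and $t_1>0$, the partial-fraction identity $\frac1{AB}=\frac1C\left(\frac1A+\frac1B\right)$ rewrites $g\cdot h=C^{-1}\bigl(g\cdot h^-+g^-\cdot h\bigr)$, where $g^-$, $h^-$ have leading exponent lowered by one; by the inner induction $g\cdot h^-$ and $g^-\cdot h$ are $\Q$-combinations of generalized Chen fractions in the combined set of variables $\{x_{i_1},\cdots,x_{i_k},x_{j_1},\cdots,x_{j_l}\}$, and since $C$ is the outermost linear form of every such fraction, multiplying by $C^{-1}$ stays in $\Q\fch$ thanks to the recursion~\meqref{eq:indchen}. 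When one of $s_1,t_1$ is negative, I would first move the polynomial factor $A^{|s_1|}=(C-B)^{|s_1|}$ (resp. $B^{|t_1|}=(C-A)^{|t_1|}$) across by the binomial theorem, trading it for finitely many nonnegative powers of $C$ times generalized Chen fractions with strictly smaller leading data, and then apply \meqref{eq:indchen} again. This is exactly the device by which the classical shuffle argument for Chen fractions in \mcite{GX,GPZ2} must be supplemented in the present signed setting, and the five-region recursion of Definition~\mref{de:extsha} is precisely the bookkeeping of these fraction manipulations.

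The part I expect to be the real obstacle is making this multi-parameter induction genuinely well-founded: each rewriting step must strictly decrease a fixed complexity measure (depth, then $|s_1|+|t_1|$), and --- most delicately --- the binomial expansions used to eliminate negative leading exponents must be shown not to reintroduce such exponents at an equal or higher level of the induction. A second point requiring care is the passage from generators to arbitrary elements: since a product of two generalized Chen fractions whose variable supports overlap need not lie in $\Q\fch$, the reduction of a general $\top$-pair $(u,v)$ to the generator case is legitimate only after rewriting $u$ and $v$ as $\Q$-combinations of generalized Chen fractions supported on $\mathrm{Var}(u)$ and $\mathrm{Var}(v)$ respectively, which rests on the structure of $\Q\fch$ brought out in this section via the Chen-symbol picture. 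Once closure is established, combining it with the inherited locality-vector-space, bilinearity and associativity data exhibits $(\Q\fch,\top,\cdot)$ as a (unitary) locality algebra.
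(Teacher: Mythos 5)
Your proposal is correct and follows essentially the same route as the paper: reduce to showing that the product of two generalized Chen fractions with disjoint variable sets lies in $\Q \fch$ (the locality vector space structure and associativity being inherited from $\calm$), and then run the five-region double induction of Definition~\mref{de:extsha}/Lemma~\mref{lem:11} with the recursions~\meqref{eq:indchen} standing in for $I$ and $J$. The only real divergence is your treatment of negative leading exponents by a one-shot binomial expansion of $(C-B)^{|s_1|}$, whereas the paper strips off one factor at a time via the fraction-level image of Cases 4--5, namely $g\cdot h = C\,(g\cdot h^{+}) - g^{-}\cdot h^{+}$, which makes the inner induction on $|s_1|$ (resp.\ $|t_1|$) well-founded by construction and sidesteps exactly the reintroduction-of-negative-exponents worry you flag.
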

\begin{proof} Since $(\Q \fch, \top )$ is a locality subspace of $\calm $, we only need to prove that $\Q \fch $ is closed under the locality function multiplication. For this purpose, it is enough to prove that the product of two elements $g, h$ in $\fch$ with $g\top h$ is in $\Q \fch $.

For $g=f(\wvec{s_1,\cdots, s_m}{x_{i_1}, \cdots, x_{i_m}}), h=f(\wvec{t_1, \cdots, t_p}{x_{j_1},\cdots, x_{j_p}})\in \fch$ with $g\top h$, which means
$$(M:=\{i_1, \cdots, i_m\})\cap (P:=\{j_1, \cdots, j_p\})=\emptyset,
$$
we now prove
$$
g\cdot h=\sum f(\wvec{u_1,\cdots, u_{m+p}}{x_{\ell _1}, \cdots, x_{\ell _{m+p}}}),
$$
where $\{\ell _1, \cdots, \ell_{m+p}\}$ is a permutation of $M \cup P$,
by induction on $d=m+p$ for $m>0, p>0$. For both the initial step $d=2$ and the induction step, following case-by-case study as in the proof of Lemma \mref {lem:11}, and using formulas in (\mref {eq:indchen}) to replace the operators $I$ and $J$, we have the conclusion.
\end{proof}

\subsection {Chen symbols}
The set $\fch$ is not a basis for $\Q \fch$. So to carry out the idea in \mcite {HXZ}, we introduce abstract Chen fractions--Chen symbols.

With the notations in Section~\mref{ss:notn}, for $X=\Z \times \Z_{\ge 1}$,  we have
$$H_{\Z \times \Z_{\ge 1}}^+:=\bigcup _{k\in \Z _{\ge 1}}(\Z \times \Z_{\ge 1})^k=\bigcup _{k\in \Z _{\ge 1}}\Z^k \times \Z_{\ge 1}^k, \quad \calh_{\Z \times \Z_{\ge 1}}^+ := \Q H_{\Z \times \Z_{\ge 1}}^+=  \bigoplus _{k\in \Z _{\ge 1}, \vec s\in \Z ^k,  \vec u\in \Z_{\ge 1}^k}\Q \wvec{\vec s}{\vec u}.$$
Here we use a double-row matrix
$\wvec {\vec s}{\vec u}$
to express an element in $H_{\Z \times \Z_{\ge 1}}^+$ with $\vec s \in \Z^k$ and $\vec u\in \Z _{\ge 1}^k$.

\begin {defn}  An element $\wvec {\vec s}{\vec u}$ is called a {\bf Chen symbol} if all entries of $\vec u$ are distinct. We will use $\sch$ to denote the set of Chen symbols together with ${\bf 1}$, and $\Q \sch$ the vector space with basis $\sch$.
\end{defn}

\begin {remark} By the definition, for a Chen symbol $\wvec{s_1, \cdots, s_k}{i_1, \cdots, i_k}$,
$f\wvec{s_1, \cdots, s_k}{x_{i_1}, \cdots, x_{i_k}}$ is a Chen fraction.
\end{remark}

  We now define a linear map
 \begin{equation} \mlabel{eq:fraci}
 I^S: \calh _{\Z \times \Z_{\ge 1}}^+\longrightarrow \calh _{\Z \times \Z_{\ge 1}}^+, \quad \wvec{\vec s}{\vec u}\mapsto \wvec{I(\vec s)}{\vec u},
\end{equation}
for basis elements $\wvec{\vec s}{\vec u}$. So its inverse is \begin{equation} \mlabel{eq:fracj}
J^S:\calh _{\Z \times \Z_{\ge 1}}^+\longrightarrow \calh _{\Z \times \Z_{\ge 1}}^+, \quad \wvec{\vec s}{\vec u}\mapsto \wvec{J(\vec s)}{\vec u}.
\end{equation}
By definition, once taking $J^S({\bf 1})=0$, then $\Q \sch$ is stable under the action of $J^S$.

\begin{defn} \mlabel{de:prodsymb}
We define a multiplication $\shap$ on
$$\calh_{\Z\times \Z_{\ge 1}}:=\calh_{\Z\times \Z_{\ge 1}}^+\oplus \Q {\bf 1}$$
in a similar manner to the extended shuffle product on $\calha$ defined in Definition~\mref {de:extsha}.

First we take ${\bf 1}$ as the unit. Then consider $\wvec {s_1,\vec s\,'}{u_1,\vec u'}$, $\wvec {t_1, \vec t\,'}{v_1,\vec v'}\in H_{\Z\times \Z_{\ge 1}}^+$ with $(s_1, \vec s\,')\in \Z^n$, $(u_1, \vec u')\in\Z_{\ge 1}^n$, $(t_1, \vec t\,')\in \Z^m$, $(v_1, \vec v')\in \Z_{\ge 1}^m$, with the convention that a depth zero vector is ${\bf 1}$. We define $\wvec {s_1,\vec s\,'}{u_1,\vec u'}\shap \wvec {t_1, \vec t\,'}{v_1,\vec v'} $ in five cases according to which of the five regions $R_i, 1\leq i\leq 5$ in Eq.~\meqref{eq:r2part} the pair $(s_1,t_1)$ belongs to.

\begin{enumerate}
\item If $(s_1,t_1)$ is in $R_1$, that is, $s_1=0$, then apply a recursion on the sum of depths  $d([s_1, \vec s\,'])+d([t_1, \vec t\,'])=n+m$ to define
 $$\wvec {0,\vec s\,'}{u_1,\vec u'}\shap \wvec {t_1, \vec t\,'}{v_1,\vec v'} :=\left [\wvec {0}{u_1}, \wvec {\vec s\,'}{\vec u'}\shap \wvec {t_1, \vec t\,'}{v_1,\vec v'}\right ];
  $$
\item If $(s_1,t_1)$ is in $R_2$, that is, $s_1>0, t_1=0$, then apply a recursion on the sum of depths  $d([s_1, \vec s\,'])+d([t_1, \vec t\,'])=n+m$ to define
  $$\wvec {s_1,\vec s\,'}{u_1,\vec u'}\shap \wvec {0, \vec t\,'}{v_1,\vec v'}:=\left[\wvec{0}{v_1}, \wvec{s_1,\vec s\,'}{u_1, \vec u'}\shap \wvec{\vec t\,'}{\vec v'}\right];
  $$
 \item If $(s_1,t_1)$ is in $R_3$, that is, $s_1>0, t_1>0$, then apply a recursion on $s_1+t_1$ to define
  $$\wvec {s_1,\vec s\,'}{u_1,\vec u'}\shap \wvec {t_1, \vec t\,'}{v_1,\vec v'} :=I^S\big(\wvec{s_1,\vec s\,'}{u_1, \vec u'}\shap \wvec{t_1-1,\vec t\,'}{\ \ v_1, \ \ \ \vec v'}\big)+I^S\big(\wvec{s_1-1,\vec s\,'}{\ \ u_1,\ \ \ \vec u'}\shap \wvec{t_1,\vec t\,'}{v_1, \vec v'}\big);
  $$
  \item If $(s_1,t_1)$ is in $R_4$, that is, $s_1>0, t_1<0$, then apply a recursion on $|t_1|$ to define
   $$\wvec {s_1,\vec s\,'}{u_1,\vec u'}\shap \wvec {t_1, \vec t\,'}{v_1,\vec v'} :=J^S\big(\wvec{s_1,\vec s\,'}{u_1, \vec u'}\shap \wvec{t_1+1, \vec t\,'}{\ \ v_1, \ \ \  \vec v'}\big)-\wvec{s_1-1, \vec s\,'}{\ \ u_1, \ \ \  \vec u'}\shap \wvec{t_1+1, \vec t\,'}{v_1,\ \ \ \ \  \vec v'};
   $$
  \item If $(s_1,t_1)$ is in $R_5$, that is, $s_1<0$, defined by induction on $|s_1|$,
  $$\wvec {s_1,\vec s\,'}{u_1,\vec u'}\shap \wvec {t_1, \vec t\,'}{v_1,\vec v'} :=J^S\big(\wvec{s_1+1,\vec s\,'}{\ \ u_1,\ \ \  \vec u'}\shap \wvec{t_1, \vec t\,'}{v_1, \vec v'}\big)-\wvec{s_1+1, \vec s\,'}{\ \ u_1,\ \ \  \vec u'}\shap \wvec{t_1-1, \vec t\,'}{\ \ v_1,\ \ \  \vec v'}.
  $$
\end{enumerate}
\end{defn}

Then just like the proof for the extended shuffle product on $\calha$ in Definition~\mref{de:extsha} , we can prove
\begin{prop} \mlabel{p:tworowprod}
Definition \mref {de:prodsymb} defines a noncommutative associative product on $\calh_{\Z \times \Z_{\ge 1}}$. Furthermore, in each basis element $\wvec{\vec{r}}{\vec{w}}$ that appears in the product $\wvec{\vec s}{\vec u}\shap \wvec{\vec t}{\vec v}$, the vector $\vec{w}$ is a shuffle of $\vec{u}$ and $\vec{v}$.
\end{prop}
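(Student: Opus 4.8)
The plan is to exploit the fact that Definition~\mref{de:prodsymb} is, on the top rows, \emph{verbatim} the recursion of Definition~\mref{de:extsha} with $I,J$ replaced by $I^S,J^S$, the bottom rows being carried along passively; so essentially every assertion follows by re-running the matching argument of Section~\mref{ss:proof} while tracking the bottom rows. First I would check well-definedness: the five recursions decrease, respectively, the total depth (Cases~1 and~2) and $s_1+t_1$, $|t_1|$, $|s_1|$ (Cases~3, 4, 5), exactly as in Definition~\mref{de:extsha}, so the same termination argument shows that $\shap$ is well defined on basis elements of positive depth and extends bilinearly to $\calh_{\Z\times\Z_{\ge 1}}$ with unit ${\bf 1}$. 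For noncommutativity I would lift the example $[0]\shap[-1]\neq[-1]\shap[0]$: one gets $\wvec{0}{1}\shap\wvec{-1}{2}=\wvec{0,-1}{1,2}$ while $\wvec{-1}{2}\shap\wvec{0}{1}=J^S\!\big(\wvec{0}{2}\shap\wvec{0}{1}\big)-\wvec{0}{2}\shap\wvec{-1}{1}=\wvec{-1,0}{2,1}-\wvec{0,-1}{2,1}$, and these differ.

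For associativity I would first establish the analogue of Lemma~\mref{lem:der}, that $J^S$ is a differential operator for $\shap$, i.e.
$$J^S\Big(\wvec{\vec s}{\vec u}\shap \wvec{\vec t}{\vec v}\Big)=J^S\Big(\wvec{\vec s}{\vec u}\Big)\shap \wvec{\vec t}{\vec v}+\wvec{\vec s}{\vec u}\shap J^S\Big(\wvec{\vec t}{\vec v}\Big),$$
using $J^S\circ I^S=\rmid$ on $\calh_{\Z\times\Z_{\ge 1}}^+$ together with the same three-case split. Associativity on basis elements then follows by the same nested inductions as in Lemma~\mref{lem:111} and Proposition~\mref{pp:asso}: an outer induction on the total depth $m+p+\ell$, and, inside the four cases according to the signs of $s_1,t_1,r_1$, the inner inductions on $s_1+t_1+r_1$, on $|r_1|$, on $|t_1|$, and on $|s_1|$. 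Whenever Case~1 or~2 peels off a $\wvec{0}{u_1}$ or $\wvec{0}{v_1}$, associativity descends to an inner product of strictly smaller top depth, precisely as in the $s_1=0$ reduction opening the proof of Proposition~\mref{pp:asso}; all remaining steps are term-by-term translations with $I,J\mapsto I^S,J^S$, the bottom labels left untouched. Since each recursion preserves the total depth, $\shap$ is graded.

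The shuffle property of the bottom row I would prove by a double induction parallel to the above: outer on $n+m$ and, in Cases~3, 4, 5, inner on $s_1+t_1$, $|t_1|$, $|s_1|$. The crucial observation is that $I^S$ and $J^S$ leave the bottom row unchanged, so in Cases~3--5 the recursive calls keep the bottom rows $\vec u$ and $\vec v$ while decreasing the secondary parameter (the inner bases landing in Cases~1--2, where the top depth strictly drops), and the conclusion survives application of $I^S,J^S$; in Cases~1--2 one prepends a single entry $u_1$ (resp.\ $v_1$) in front of the bottom row of a product of total depth $n+m-1$, which by the outer induction is a shuffle of the relevant truncations, so prepending that entry yields a shuffle of $\vec u$ and $\vec v$.

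I expect the only real obstacle to be organizational: the associativity argument is a long case-by-case nested induction, but there is no genuinely new analytic or combinatorial content, each case being the direct translation of the matching case in the proof of Proposition~\mref{pp:asso}, since $I^S,J^S$ act on top rows exactly as $I,J$ do on $\calha$ while the bottom rows are inert. The one subtlety to watch is the well-foundedness of the secondary inductions in Cases~3--5, which bottom out in Cases~1--2 where the top depth genuinely decreases.
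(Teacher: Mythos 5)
Your proposal is correct and follows exactly the route the paper intends: the paper itself gives no written proof of Proposition~\mref{p:tworowprod}, stating only that it is proved ``just like'' the construction, associativity and uniqueness arguments of Section~\mref{ss:proof} with $I,J$ replaced by $I^S,J^S$ and the bottom rows carried along inertly, which is precisely your plan (your explicit treatment of the bottom-row shuffle property and the lifted noncommutativity example are correct and merely spell out what the paper leaves implicit).
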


Because of the similarity between the products on $\calh_{\Z \times \Z_{\ge 1}}$ and $\calh _\Z$ given in Definitions~ \mref {de:extsha} and \mref {de:prodsymb} respectively, the following result is easy to check.

\begin{prop} The linear map
$$\Phi: \calh_{\Z \times \Z_{\ge 1}}\to \calh _\Z, \quad \wvec {\vec s}{\vec u}\mapsto [\vec s]
$$
is an algebra homomorphism.
\mlabel{prop:phihomo}
\end{prop}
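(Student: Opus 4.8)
The plan is to reduce the statement to the single identity
$$\Phi\!\left(\wvec{\vec s}{\vec u}\shap \wvec{\vec t}{\vec v}\right)=[\vec s]\shap[\vec t]$$
for all basis elements $\wvec{\vec s}{\vec u},\wvec{\vec t}{\vec v}$ of $\calh_{\Z\times\Z_{\ge 1}}$; combined with $\Phi({\bf 1})={\bf 1}$ and linearity, this gives that $\Phi$ is an algebra homomorphism. Three structural facts make the reduction routine: (i) $\Phi$ is graded with respect to the depth gradings; (ii) $\Phi$ intertwines the auxiliary operators, i.e. $\Phi\circ I^S=I\circ\Phi$ and $\Phi\circ J^S=J\circ\Phi$ on $\calh_{\Z\times\Z_{\ge 1}}^+$, both immediate by comparing \meqref{eq:fraci} and \meqref{eq:fracj} with \meqref{eq:NewI} and \meqref{eq:jmap}; and (iii) $\Phi$ commutes with prepending a first column, $\Phi\!\left(\left[\wvec{r}{w},Z\right]\right)=[r,\Phi(Z)]$ for $Z\in\calh_{\Z\times\Z_{\ge 1}}^+$, which follows at once from the definition of $\Phi$ together with linearity.

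With these in hand I would run exactly the induction scheme of Definitions~\mref{de:prodsymb} and \mref{de:extsha}: the outer induction is on the total depth $d\!\left(\wvec{\vec s}{\vec u}\right)+d\!\left(\wvec{\vec t}{\vec v}\right)$, and within the inductive step one splits into the five cases according to which region $R_i$ of \meqref{eq:r2part} the pair $(s_1,t_1)$ lies in, carrying the inner inductions on $s_1+t_1$, $|t_1|$, $|s_1|$ exactly as in Definition~\mref{de:prodsymb}. In each case one writes down the defining recursion for $\shap$ on $\calh_{\Z\times\Z_{\ge 1}}$, applies $\Phi$ and pushes it inward using (ii) to turn $I^S,J^S$ into $I,J$ (and in cases $R_1,R_2$ using (iii) to pull $\Phi$ past the prepended column), invokes the induction hypothesis to replace $\Phi$ of each two-row subproduct by the corresponding one-row product, and reads off the result as the defining recursion for $\shap$ on $\calh_\Z$ in Definition~\mref{de:extsha}. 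For instance in case $R_1$ one obtains
$$\Phi\!\left(\wvec{0,\vec s\,'}{u_1,\vec u'}\shap\wvec{t_1,\vec t\,'}{v_1,\vec v'}\right)=\left[0,\Phi\!\left(\wvec{\vec s\,'}{\vec u'}\shap\wvec{t_1,\vec t\,'}{v_1,\vec v'}\right)\right]=\left[0,[\vec s\,']\shap[t_1,\vec t\,']\right]=[0,\vec s\,']\shap[t_1,\vec t\,'].$$
The depth-zero/empty-tail conventions ($\vec s\,'={\bf 1}$ when $n=1$, etc.) are handled as in Definition~\mref{de:extsha}, and the base of the first-column recursion in cases $R_1,R_2$ is matched by the two-row analogue of $[0]\shap[t_1]=[0,t_1]$ (resp. $[s_1]\shap[0]=[0,s_1]$).

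I do not expect a genuine obstacle: the two products were written down by deliberately parallel recursions, and $\Phi$ is built precisely to forget the extra data $\vec u$ that distinguishes $\calh_{\Z\times\Z_{\ge 1}}$ from $\calh_\Z$, so the verification is pure bookkeeping. The only points requiring care are keeping the nested inductions straight and staying consistent about the empty-tail conventions, and since these are exactly the issues already addressed in Definitions~\mref{de:extsha} and \mref{de:prodsymb}, the write-up can be kept short by referring back to them. (One could alternatively try to deduce the result from a uniqueness characterization of $\shap$ on $\calh_{\Z\times\Z_{\ge 1}}$ analogous to Theorem~\mref{thm:Xshap}, but since no such uniqueness statement is recorded above, the direct induction is the cleaner route.)
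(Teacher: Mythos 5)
Your proposal is correct and is exactly the verification the paper has in mind: the paper's "proof" is a one-line remark that the claim is "easy to check" from the deliberately parallel recursions in Definitions~\mref{de:extsha} and \mref{de:prodsymb}, and your induction on total depth with the five regional cases, using $\Phi\circ I^S=I\circ\Phi$, $\Phi\circ J^S=J\circ\Phi$, and compatibility with prepending a column, is precisely the bookkeeping being elided. No gap.
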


Now we introduce a locality relation $\top$ on $\sch$: for $\wvec{\vec s}{\vec u}, \wvec{\vec t}{\vec v}\in \sch$, define
$${\bf 1}\top  \wvec{\vec s}{\vec u},
$$
$$ \wvec{\vec s}{\vec u}\top \wvec{\vec t}{\vec v} \Longleftrightarrow {\rm entries \ of } \ \vec u, \vec v \ {\rm are \ distinct},$$
and extend it to a locality relation on $\Q \sch$.

\begin{prop}
\mlabel {prop:zhomo}
\begin{enumerate}
\item The triple $(\Q \sch, \top, \shap)$ is a locality algebra.
\mlabel{it:zhomo1}
\item  \mlabel{it:zhomo2}
The linear map
\begin{equation} \mlabel{eq:fmap}
F: \Q \sch \longrightarrow \Q \fch, \quad \wvec{s_1,\cdots, s_k}{i_1,\cdots, i_k}\mapsto f\wvec{s_1,\cdots, s_k}{x_{i_1},\cdots, x_{i_k}}
\end{equation}
is a locality algebra homomorphism, that is,
$$F(\wvec{\vec s}{\vec u}\shap\wvec{\vec t}{\vec v})=F(\wvec{\vec s}{\vec u})F(\wvec{\vec t}{\vec v})$$
for $\wvec{\vec s}{\vec u}\top  \wvec{\vec t}{\vec v} \in \sch$.
\end{enumerate}
\end{prop}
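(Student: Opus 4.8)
The plan is to deduce both assertions from a single structural input, Proposition~\ref{p:tworowprod}: in every basis element $\wvec{\vec r}{\vec w}$ occurring in a product $\wvec{\vec s}{\vec u}\shap\wvec{\vec t}{\vec v}$, the lower row $\vec w$ is a shuffle of $\vec u$ and $\vec v$, so that the \emph{set} of indices occurring in $\vec w$ is exactly the union of the index set of $\vec u$ and that of $\vec v$ --- the same set for every term of the product. I will call this the index-invariance property.

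For the first assertion, the first point is that $(\Q\sch,\top)$ is a locality vector space: for a subset $X\subseteq\Q\sch$, the polar $X^\top$ consists of those $w$ whose basis terms all have index set disjoint from the index set of every basis term of every element of $X$ (with the convention that ${\bf 1}$ has empty index set); since the index set of a basis term of $w_1+w_2$ lies in the union of the index sets of the basis terms of $w_1$ and of $w_2$, the polar $X^\top$ is a linear subspace. Closure of $\Q\sch$ under $\shap$ follows at once from the index-invariance property: when $\wvec{\vec s}{\vec u}\top\wvec{\vec t}{\vec v}$ the index sets of $\vec u$ and $\vec v$ are disjoint, so each $\vec w$ occurring in the product has distinct entries, i.e. each $\wvec{\vec r}{\vec w}$ is a Chen symbol. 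Locality bilinearity is then the restriction of the honest bilinearity of the associative product of Proposition~\ref{p:tworowprod}; and for locality associativity, if $x\top y$, $x\top z$, $y\top z$, then every term of $x\shap y$ has index set inside (index set of $x$)$\cup$(index set of $y$), disjoint from that of $z$, so $(x\shap y)\top z$ and likewise $x\top(y\shap z)$, while $(x\shap y)\shap z=x\shap(y\shap z)$ is ordinary associativity. Since ${\bf 1}$ is the unit and is $\top$-related to everything, $(\Q\sch,\top,\shap)$ is a unitary locality algebra.

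For the second assertion, that $F$ preserves the locality relations is immediate from the definitions: disjoint index sets of $\vec u$ and $\vec v$ mean $F\wvec{\vec s}{\vec u}$ and $F\wvec{\vec t}{\vec v}$ involve disjoint sets of variables, which is precisely $\top$ on $\Q\fch$; and $F({\bf 1})=1=f\wvec{0}{x_i}$. The content is multiplicativity, $F(\wvec{\vec s}{\vec u}\shap\wvec{\vec t}{\vec v})=F\wvec{\vec s}{\vec u}\cdot F\wvec{\vec t}{\vec v}$ for $\top$-related Chen symbols. I would prove it by the same layered induction that underlies Lemma~\ref{lem:11}, Proposition~\ref{pp:asso}, and the proof that $(\Q\fch,\top,\cdot)$ is a locality algebra: an outer induction on the sum of the depths of the two factors, and, inside each of the five regions $R_1,\dots,R_5$ of Definition~\ref{de:prodsymb}, an inner induction (on $s_1+t_1$ in $R_3$, on $|t_1|$ in $R_4$, on $|s_1|$ in $R_5$), with $R_1$ and $R_2$ treated first so they supply boundary conditions. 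Two observations make each step formal. First, in $R_1$ and $R_2$ the right-hand side of Definition~\ref{de:prodsymb} prepends a column of exponent $0$; since $f\wvec{0}{x_i}=1$, applying $F$ yields $F$ of a product of strictly smaller total depth, to which the outer hypothesis applies, and the corresponding identity for function products is the triviality that multiplying by $f\wvec{0}{x_i}=1$ changes nothing. Second, by the recursions \eqref{eq:indchen} together with the index-invariance property, on $\wvec{\vec s}{\vec u}\shap\wvec{\vec t}{\vec v}$ one has $F\circ I^S=\chi^{-1}F$ and $F\circ J^S=\chi F$, where $\chi=\sum_j x_j$ runs over the common index set of all terms (the union of the index sets of $\vec u$ and $\vec v$); because this scalar is the same for every term, it factors out of the sum. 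Plugging the second observation into Definition~\ref{de:prodsymb}(3)--(5) and invoking the induction hypothesis reduces each case to a one-line identity of rational functions: writing $A$ and $B$ for the sums of the variables indexed by $\vec u$ and by $\vec v$ respectively (so $\chi=A+B$), and $f\wvec{s_1,\vec s\,'}{\vec u}=A^{-s_1}f\wvec{\vec s\,'}{\vec u\,'}$ (and similarly for $\vec t$), case $R_3$ becomes $(A+B)^{-1}(A^{-s_1}B^{-t_1+1}+A^{-s_1+1}B^{-t_1})=A^{-s_1}B^{-t_1}$, and cases $R_4$ and $R_5$ become the analogous identities, each collapsing to $(A+B)-A=B$ or $(A+B)-B=A$.

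I expect the only genuinely non-mechanical ingredient to be the second observation: the intertwining of $F$ with $I^S$ and $J^S$ up to the scalar $\chi^{\mp1}$ relies essentially on the fact that the \emph{total} index set is the same for every term of a given product $\wvec{\vec s}{\vec u}\shap\wvec{\vec t}{\vec v}$ --- i.e. on the shuffle statement of Proposition~\ref{p:tworowprod} --- since otherwise $\chi$ would differ from term to term and could not be pulled out of the sum. Everything else is bookkeeping running exactly parallel to the inductions already carried out over $\calh_{\Z}$.
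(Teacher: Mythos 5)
Your proposal is correct and follows essentially the same route as the paper: part (i) is deduced from Proposition~\mref{p:tworowprod}, and part (ii) is proved by the same outer induction on total depth combined with inner inductions over the five regions, using that $F$ intertwines $I^S$ and $J^S$ with multiplication by $(\sum_j x_j)^{\mp 1}$ over the common index set of all terms. Your isolation of that intertwining (and of the fact that the shuffle statement in Proposition~\mref{p:tworowprod} is what lets the scalar factor out of the sum) is a cleaner packaging of exactly the computations the paper carries out case by case.
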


\begin{proof}
\meqref{it:zhomo1} This follows from Proposition~\mref{p:tworowprod}.

\meqref{it:zhomo2} We only need to prove basis elements of $\sch$, it is obvious that
$$F({\bf1}\shap \wvec{\vec s}{\vec u})=F({\bf 1}) F(\wvec{\vec s}{\vec u})=F(\wvec{\vec s}{\vec u})=F(\wvec{\vec s}{\vec u})F({\bf 1})=F(\wvec{\vec s}{\vec u}\shap {\bf 1}).
$$
Now for $\wvec{\vec s}{\vec u}, \wvec{\vec t}{\vec v} \in \sch$, let ${\vec s}\in \Z^m, {\vec u}\in\Z_{\ge 1}^m, \vec t\in \Z^p, \vec v\in \Z_{\ge 1}^p$, we prove the conclusion by a double induction. Firstly, we do induction on the sum depth $d:=m+p$.
~\\
{\bf Step 1:} For $d=2$, we check case by case.
\begin{enumerate}
 \item {\bf Case 1:}If $s=0$, then we know
$$F(\wvec{0}{i_1}\shap \wvec{t}{i_2})=F(\wvec{0, t}{i_1, i_2})=\frac{1}{x_{i_2}^{t}}=F(\wvec{0}{i_1})F(\wvec{t}{i_2}).
$$
 \item {\bf Case 2:} If $s\ge 0, t\ge 0$,
   let
  $$Z_n:=\{(s, t)\in \Z_{\ge 0}\times \Z_{\ge 0} | s+t=n\},
  $$
  and
  $$T:=\{(s, t)\in \Z_{\ge 0}\times \Z_{\ge 0} | F(\wvec{s}{i_1}\shap \wvec{t}{i_2})=F(\wvec{s}{i_1})F(\wvec{t}{i_2})\}.
  $$
If $s=0$ or $t=0$, then we have already known $F(\wvec{s}{i_1}\shap \wvec{t}{i_2})=F(\wvec{s}{i_1})F(\wvec{t}{i_2})$, so $Z_0\subset T$. Assuming $Z_n\subset T$, then for $(s, t)\in Z_{n+1}$, we have
{\small
  \begin{equation*}
  \begin{split}
  F(\wvec{s}{i_1}\shap \wvec{t}{i_2})&=F\big(I^S(\wvec{s}{i_1}\shap \wvec{t-1}{i_2}+\wvec{s-1}{i_1}\shap \wvec{t}{i_2})\big)\\
  &=\frac{1}{x_{i_1}+x_{i_2}}\big(F(\wvec{s}{i_1}\shap \wvec{t-1}{i_2})+F(\wvec{s-1}{i_1}\shap \wvec{t}{i_2})\big)\\
  &=\frac{1}{x_{i_1}+x_{i_2}}\big(F(\wvec{s}{i_1})F(\wvec{t-1}{i_2})+F(\wvec{s-1}{i_1})F(\wvec{t}{i_2})\big)\\
  &=\frac{1}{x_{i_1}+x_{i_2}}(\frac{1}{x_{i_1}^s x_{i_2}^{t-1}}+\frac{1}{x_{i_1}^{s-1}x_{i_2}^t})\\
  &=\frac{1}{x_{i_1}^s x_{i_2}^{t}}=F(\wvec{s}{i_1})F(\wvec{t}{i_2}).
  \end{split}
  \end{equation*}
}
  Hence $Z_{n+1}\subset T$.
  \item {\bf Case 3:} If $s\ge 0, t\le0$,  let
  $$Z_n:=\{(s, t)\in \Z_{\ge 0}\times \Z_{\le 0} | t=-n\},
  $$
  and
  $$T:=\{(s, t)\in \Z_{\ge 0}\times \Z_{\le 0} | F(\wvec{s}{i_1}\shap \wvec{t}{i_2})=F(\wvec{s}{i_1})F(\wvec{t}{i_2})\},
  $$
 We know $(0, t)\in T$, and then for $(s, 0)\in Z_0$,
\begin{equation*}
\begin{split}
&F(\wvec{s}{i_1}\shap\wvec{0}{i_2})=F(\wvec{0,s}{i_2, i_1})=f\wvec{0,s}{x_{i_2}, x_{i_1}}=\frac{1}{x_{i_1}^s}=f\wvec{0}{x_{i_2}}f\wvec{s}{x_{i_1}}\\
=&F(\wvec{s}{i_1})F(\wvec{0}{i_2}).
\end{split}
\end{equation*}
So $Z_0\subset T$.
  Now assume $Z_n\subset T$, for $(s, t)\in Z_{n+1}$, we only need to consider $s>0$. Then
{\small
\begin{equation*}
\begin{split}
F(\wvec{s}{i_1}\shap \wvec{-n-1}{i_2})&= F\big(\wvec{s}{i_1}\shap J^S(\wvec{-n}{i_2})\big)\\
 &=F\big(J^S(\wvec{s}{i_1}\shap\wvec{-n}{i_2})\big)-F\big(J^S(\wvec{s}{i_1})\shap \wvec{-n}{i_2}\big)\\
 &=(x_{i_1}+x_{i_2})f\wvec{s}{x_{i_1}}f\wvec{-n}{x_{i_2}}-f\wvec{s-1}{x_{i_1}} f\wvec{-n}{x_{i_2}}\\
 &=(x_{i_1}+x_{i_2})\frac{x_{i_2}^n}{x_{i_1}^s}-\frac{x_{i_2}^n}{x_{i_1}^{s-1}}\\
 &=\frac{x_{i_2}^{n+1}}{x_{i_1}^s}=f\wvec{s}{x_{i_1}}f\wvec{-n-1}{x_{i_2}}=F(\wvec{s}{i_1})F(\wvec{-n-1}{i_2}).
 \end{split}
\end{equation*}
}
Therefore, $(s, -n-1)\in T$ and hence $Z_{n+1}\subset T$.
  \item {\bf Case 4:} If $s\le 0$,
 let
 $$Z_n:=\{(s, t)\in \Z_{\le 0}\times \Z | s=-n\},
 $$
 and
 $$T:=\{(s, t)\in \Z_{\le 0}\times \Z |  F(\wvec{s}{i_1}\shap \wvec{t}{i_2})= F(\wvec{s}{i_1})F(\wvec{t}{i_2})\}.
 $$
We know $Z_0\subset  T$ by the previous case 1. Now assume $Z_n\subset T$, and take $(s, t)\in Z_{n+1}$. Then we have
{\small
 \begin{equation*}
 \begin{split}
 F(\wvec{-n-1}{i_1}\shap \wvec{t}{i_2})&= F\big(J^S(\wvec{-n}{i_1})\shap \wvec{t}{i_2}\big)\\
 &=F\big(J^S(\wvec{-n}{i_1}\shap\wvec{t}{i_2})\big)-F\big(\wvec{-n}{i_1}\shap J^S(\wvec{t}{i_2})\big)\\
 &=(x_{i_1}+x_{i_2})f\wvec{-n}{x_{i_1}}f\wvec{t}{x_{i_2}}-f\wvec{-n}{x_{i_1}} f\wvec{t-1}{x_{i_2}}\\
 &=(x_{i_1}+x_{i_2})x_{i_1}^n\frac{1}{x_{i_2}^t}-x_{i_1}^n\frac{1}{x_{i_2}^{t-1}}\\
 &=\frac{x_{i_1}^{n+1}}{x_{i_2}^t}=f\wvec{-n-1}{x_{i_1}}f\wvec{t}{x_{i_2}}=F(\wvec{-n-1}{i_1})F(\wvec{t}{i_2}),
 \end{split}
 \end{equation*}
}
implying $(-n-1, t)\in T$, so $Z_{n+1}\subset T$.
\end{enumerate}
{\bf Step2:} Then assume that the conclusion holds for $d=m+p=\ell \ge 2$, that is,
$$F(\wvec{s_1,\cdots,s_m}{i_1,\cdots,i_m}\shap\wvec{t_1, \cdots, t_p}{i_{m+1}, \cdots, i_{m+p}})=F(\wvec{s_1,\cdots,s_m}{i_1,\cdots,i_m})F(\wvec{t_1,\cdots, t_p}{i_{m+1}, \cdots, i_{m+p}})
$$
for $d=\ell $, with $i_1, \cdots, i_{m+p}$ distinct. Then for $d=m+p=\ell+1$, we proceed in four cases.
\begin{enumerate}
  \item {\bf Case 1:}If $s_1=0$, then
{\small
\begin{equation*}
\begin{split}
&F(\wvec{0,\cdots,s_m}{i_1,\cdots,i_m}\shap\wvec{t_1, \cdots, t_p}{i_{m+1}, \cdots, i_{m+p}})\\
=&F(\big[\wvec{0}{i_1}, \wvec{s_2,\cdots,s_m}{i_2,\cdots, i_m}\shap\wvec{t_1,\cdots, t_p}{i_{m+1}, \cdots, i_{m+p}}\big])=f(\wvec{s_2,\cdots,s_m}{x_{i_2},\cdots, x_{i_m}}\shap\wvec{t_1,\cdots, t_p}{x_{i_{m+1}}, \cdots, x_{i_{m+p}}})\\
=&f\wvec{s_2,\cdots,s_m}{x_{i_2},\cdots, x_{i_m}} f\wvec{t_1,\cdots, t_p}{x_{i_{m+1}}, \cdots, x_{i_{m+p}}}=f\wvec{0,s_2,\cdots,s_m}{x_{i_1}, x_{i_2},\cdots, x_{i_m}} f\wvec{t_1,\cdots, t_p}{x_{i_{m+1}}, \cdots, x_{i_{m+p}}}\\
=&F(\wvec{0,s_2,\cdots,s_m}{i_1,\cdots,i_m})F(\wvec{t_1,\cdots, t_p}{i_{m+1}, \cdots, i_{m+p}}),
\end{split}
\end{equation*}
}
where the third equality follows from the induction hypothesis.
  \item {\bf Case 2:}If $s_1\ge 0, t_1\ge 0$, let
  $$Z_n:=\{(s_1, t_1)\in \Z_{\ge 0}\times \Z_{\ge 0} | s_1+t_1=n\},
  $$
  and $$T:=\{(s_1, t_1)\in \Z_{\ge 0}\times \Z_{\ge 0} | F(\wvec{\vec s}{\vec u}\shap \wvec{\vec t}{\vec v})=F(\wvec{\vec s}{\vec u})F(\wvec{\vec t}{\vec v})\}.
  $$
We know $(0,t_1)\in T$ fr any $t_1\in \Z$ and $(s_1,0)\in T$ for $s_1\in \Z _{\ge 0}$. Hence $Z_0\subset T$. Now assuming $Z_n\subset T$, for $(s_1, t_1)\in Z_{n+1}$, we can assume $s_1>0$ and $t_1>0$. Then
\begin{align*}
 &F(\wvec{s_1, \cdots, s_m}{i_1, \cdots, i_m}\shap \wvec{t_1, \cdots, t_p}{i_{m+1}, \cdots, i_{m+p}})\\
=&F\big(I^S(\wvec{s_1,\cdots, s_m}{i_1, \cdots, i_m}\shap \wvec{t_1-1,\cdots, t_p}{i_{m+1}, \cdots, i_{m+p}})+I^S(\wvec{s_1-1,\cdots, s_m}{i_1, \cdots, i_m}\shap \wvec{t_1,\cdots, t_p}{i_{m+1}, \cdots, i_{m+p}})\big)\\
=&\frac{1}{x_{i_1}+\cdots+x_{i_{m+p}}}F(\wvec{s_1,\cdots, s_m}{i_1, \cdots, i_m}\shap \wvec{t_1-1,\cdots, t_p}{i_{m+1}, \cdots, i_{m+p}}+\wvec{s_1-1,\cdots, s_m}{i_1, \cdots, i_m}\shap \wvec{t_1,\cdots, t_p}{i_{m+1}, \cdots, i_{m+p}})\\
=&\frac{1}{x_{i_1}+\cdots+x_{i_{m+p}}}\big(F(\wvec{s_1,\cdots, s_m}{i_1, \cdots, i_m})F(\wvec{t_1-1,\cdots, t_p}{i_{m+1}, \cdots, i_{m+p}})+F(\wvec{s_1-1,\cdots, s_m}{i_1, \cdots, i_m})F(\wvec{t_1,\cdots, t_p}{i_{m+1}, \cdots, i_{m+p}})\big)\\
=&\frac{x_{i_{m+1}}+\cdots+x_{i_{m+p}}}{(x_{i_1}+\cdots+x_{i_{m+p}})}F(\wvec{s_1,\cdots, s_m}{x_{i_1}, \cdots, x_{i_m}})F(\wvec{t_1,\cdots, t_p}{x_{i_{m+1}}, \cdots, x_{i_{m+p}}})\\
&+\frac{x_{i_{1}}+\cdots+x_{i_{m}}}{(x_{i_1}+\cdots+x_{i_{m+p}})}F(\wvec{s_1,\cdots, s_m}{x_{i_1}, \cdots, x_{i_m}})F(\wvec{t_1,\cdots, t_p}{x_{i_{m+1}}, \cdots, x_{i_{m+p}}})\\
=&F(\wvec{s_1,\cdots, s_m}{i_1, \cdots, i_m})F(\wvec{t_1,\cdots, t_p}{i_{m+1}, \cdots, i_{m+p}}).
\end{align*}
Thus $Z_{n+1}\subset T$.
  \item {\bf Case 3:} If $s_1\ge 0, t_1\le 0$,
  let
  $$Z_n:=\{(s_1, t_1)\in \Z_{\ge 0}\times \Z_{\le 0} | t_1=-n\}
  $$
  $$T:=\{(s_1, t_1)\in \Z_{\ge 0}\times \Z_{\le 0} | F(\wvec{\vec s}{\vec u}\shap \wvec{\vec t}{\vec v})=F(\wvec{\vec s}{\vec u})F(\wvec{\vec t}{\vec v})\}.
  $$
We know $(0,t_1)\in T$. So for $(s_1, 0)\in Z_0$, we might assume $s_1>0$.
Since
\begin{equation*}
\begin{split}
&F(\wvec{s_1,\cdots,s_m}{i_1,\cdots, i_m}\shap \wvec{0,\cdots, t_p}{i_{m+1},\cdots, i_{m+p}})=F(\big[\wvec{0}{i_{m+1}}, \wvec{s_1,\cdots,s_m}{i_1,\cdots, i_m}\shap \wvec{t_2,\cdots, t_p}{i_{m+2},\cdots, i_{m+p}}\big])\\
=&F(\wvec{s_1,\cdots,s_m}{i_1,\cdots, i_m}\shap \wvec{t_2,\cdots, t_p}{i_{m+2},\cdots, i_{m+p}})=F(\wvec{s_1,\cdots,s_m}{i_1,\cdots, i_m})F( \wvec{t_2,\cdots, t_p}{i_{m+2},\cdots, i_{m+p}})\\
=&F(\wvec{s_1,\cdots,s_m}{i_1,\cdots, i_m}\shap \wvec{0,t_2,\cdots, t_p}{i_{m+1},i_{m+2}\cdots, i_{m+p}}),
\end{split}
\end{equation*}
we have $Z_0\subset T$. Now assuming $Z_n\subset T$, for $(s_1, t_1)\in Z_{n+1}$, we can assume $s_1>0$. Then
\begin{equation*}
\begin{split}
 &F(\wvec{s_1,\cdots, s_m}{i_1, \cdots,i_m}\shap\wvec{-n-1,\cdots,t_p}{i_{m+1},\cdots, i_{m+p}})=F\big(\wvec{s_1,\cdots, s_m}{i_1, \cdots,i_m}\shap J^S(\wvec{-n,\cdots,t_p}{i_{m+1},\cdots, i_{m+p}})\big)\\
=&F\big(J^S(\wvec{s_1,\cdots, s_m}{i_1, \cdots,i_m}\shap\wvec{-n,\cdots,t_p}{i_{m+1},\cdots, i_{m+p}})\big)-F\big(J^S(\wvec{s_1,\cdots, s_m}{i_1, \cdots,i_m})\shap \wvec{-n,\cdots,t_p}{i_{m+1},\cdots, i_{m+p}}\big)\\
=&(x_{i_1}+\cdots+x_{i_{m+p}})F\big(\wvec{s_1,\cdots, s_m}{i_1, \cdots,i_m})F(\wvec{-n,\cdots,t_p}{i_{m+1},\cdots, i_{m+p}}\big)\\
&-F\big(\wvec{s_1-1,\cdots, s_m}{i_1, \cdots,i_m})F(\wvec{-n,\cdots,t_p}{i_{m+1},\cdots, i_{m+p}}\big)\\
=&F(\wvec{s_1,\cdots, s_m}{i_1, \cdots,i_m})F(\wvec{-n-1,\cdots,t_p}{i_{m+1},\cdots, i_{m+p}}).
\end{split}
\end{equation*}
Thus $(s_1, -n-1)\in T$, which means $Z_{n+1}\subset T$.
  \item {\bf Case 4:}If $s_1\le 0$, let
  $$Z_n:=\{(s_1, t_1)\in \Z_{\le 0}\times \Z | s_1=-n\}
  $$
  and
  $$T:=\{(s_1, t_1)\in \Z_{\le 0}\times \Z | F(\wvec{\vec s}{\vec u}\shap \wvec{\vec t}{\vec v})=F(\wvec{\vec s}{\vec u})F(\wvec{\vec t}{\vec v})\}.
  $$
We know $(0, t_1)\in T$, and so $Z_0\subset T$. Assuming $Z_n\subset T$, then for $(-n-1, t_1)\in Z_{n+1}$, we have
{\small
\begin{equation*}
\begin{split}
 &F(\wvec{-n-1, \cdots, s_m}{i_1,\cdots, i_m}\shap \wvec{t_1, \cdots, t_p}{i_{m+1}, \cdots, i_{m+p}})=F\big(J^S(\wvec{-n, \cdots, s_m}{i_1,\cdots, i_m})\shap \wvec{t_1, \cdots, t_p}{i_{m+1}, \cdots, i_{m+p}}\big)\\
=&F\big(J^S(\wvec{-n, \cdots, s_m}{i_1,\cdots, i_m}\shap \wvec{t_1, \cdots, t_p}{i_{m+1}, \cdots, i_{m+p}})\big)-F(\wvec{-n, \cdots, s_m}{i_1,\cdots, i_m}\shap J^S(\wvec{t_1, \cdots, t_p}{i_{m+1}, \cdots, i_{m+p}}))\\
=&(x_{i_1}+\cdots+x_{i_{m+p}})F(\wvec{-n, \cdots, s_m}{i_1,\cdots, i_m})F( \wvec{t_1, \cdots, t_p}{i_{m+1}, \cdots, i_{m+p}})\\
&-F(\wvec{-n, \cdots, s_m}{i_1,\cdots, i_m})F(\wvec{t_1-1, \cdots, t_p}{i_{m+1}, \cdots, i_{m+p}})\\
=&(x_{i_1}+\cdots+x_{i_{m}})F(\wvec{-n, \cdots, s_m}{i_1,\cdots, i_m})F( \wvec{t_1, \cdots, t_p}{i_{m+1}, \cdots, i_{m+p}})\\
&+(x_{i_{m+1}}+\cdots+x_{i_{m+p}})F(\wvec{-n, \cdots, s_m}{i_1,\cdots, i_m})F( \wvec{t_1, \cdots, t_p}{i_{m+1}, \cdots, i_{m+p}})\\
&-F(\wvec{-n, \cdots, s_m}{i_1,\cdots, i_m})F(\wvec{t_1-1, \cdots, t_p}{i_{m+1}, \cdots, i_{m+p}})\\
=&F(\wvec{-n-1, \cdots, s_m}{i_1,\cdots, i_m})F(\wvec{t_1, \cdots, t_p}{i_{m+1}, \cdots, i_{m+p}})+F(\wvec{-n, \cdots, s_m}{i_1,\cdots, i_m})F(\wvec{t_1-1, \cdots, t_p}{i_{m+1}, \cdots, i_{m+p}})\\
&-F(\wvec{-n, \cdots, s_m}{i_1,\cdots, i_m})F(\wvec{t_1-1, \cdots, t_p}{i_{m+1}, \cdots, i_{m+p}})\\
=&F(\wvec{-n-1, \cdots, s_m}{i_1,\cdots, i_m})F(\wvec{t_1, \cdots, t_p}{i_{m+1}, \cdots, i_{m+p}}).
\end{split}
\end{equation*}
}
Thus $(-n-1, t_1)\in T$, which implies $Z_{n+1}\subset T$.
\end{enumerate}
In summary, we have completed the inductive proof, whence $F$ is a locality algebra homomorphism.
\end{proof}

\section{The subalgebra for convergent series}
\mlabel{s:conv}

We know that $\calh^0$, the space spanned by all convergent positive integer arguments, is a subalgebra of $\calh_{\Z_{\ge 1}}$ under the shuffle product and that
$$\zeta^\shap: (\calh^0,  \shap)\longrightarrow \R$$
is an algebra homomorphism. This will be extended to the space spanned by all convergent integer arguments for multiple zeta series.

For $\vec s\in \Z ^k$, the multiple zeta series
converges when
$$s_1+\cdots+ s_j>j, \ j=1, \cdots, k.$$
Denote
$$\calh ^0_\Z: =\Q{\bf 1}\oplus\bigoplus_{\vec s\in \Z^k, s_1+\cdots +s_j>j, j=1, \cdots, k, k\ge 1} \Q[\vec s]$$
be the linear span of these convergent integer points. We first prove that it is a subalgebra of $(\calha , \shap)$.

\subsection{Estimation of partial weights} Since the criterion of convergency involves partial weights, to show that $\calh ^0_\Z$ is a subalgebra, we need to estimate the partial weights of terms appearing in the extended shuffle product.

  For $\vec s=(s_1, \cdots, s_m) \in \Z ^m$, recall from Section ~\mref{ss:notn}, that $w_i([\vec s]):=s_1+\cdots+s_i$ of $[\vec s]$  is called the $i$-th partial weight. Because of the possible appearance of negative entries, we have to modify the notion of partial weights.
For $\vec s \in \Z ^m$, denote
\begin{equation} \mlabel{eq:tildepw}
\tilde w_j([\vec s]):=\left \{ \begin {array}{ll}w_j([\vec s]),& \text{if either } s_{j+1}> 0\, \text{ or }\, j=m,\\
w_{j+1}([\vec s]),& \text{if } s_{j+1}\le  0,
\end{array}\right .
\end{equation}
and for ${\bf 1}$, let
$$w_i({\bf 1}):=\tilde w_i({\bf 1}):=0, \ i\ge 0.
$$

\begin{exam}
	$$\tilde w_0([-1,2])=-1, \tilde w_1([-1,2])=-1, \tilde w_2([-1, 2])=1;
	$$
	$$\tilde w_0([2,-1])=0, \tilde w_1([2,-1])=1, \tilde w_2([2, -1])=1;
	$$
\end{exam}

\begin {remark}\label{rem:wei>pwei} By definition, $w_j([\vec s])\ge \tilde w_j([\vec s])$, and $w_j([\vec s])= \tilde w_j([\vec s])$ if and only if either $s_{j+1}\ge 0$ or $j=m$. Note that when $j=m$, $s_{j+1}$ is not defined. Also we have $\tilde w_0([\vec s])\le 0$.
\end{remark}

Furthermore, the following result also follows from the definition.
\begin{lemma}
\mlabel{eq:kandk-1}
For $k\ge 1$,
$$\tilde w_k([0,\vec s])=\tilde w_{k-1}([\vec s]).
$$
\end{lemma}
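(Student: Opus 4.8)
The plan is to unwind the definition \meqref{eq:tildepw} directly, keeping careful track of the index shift produced by prepending a zero. Write $\vec s=(s_1,\dots,s_m)$, so that $[0,\vec s]=[t_1,\dots,t_{m+1}]$ with $t_1=0$ and $t_{j+1}=s_j$ for $1\le j\le m$. Two elementary observations drive the argument: first, prepending a zero shifts partial sums without changing any value, so $w_j([0,\vec s])=w_{j-1}([\vec s])$ for all $j\ge 1$; second, for $k\ge 1$ the entry of $[0,\vec s]$ in position $k+1$ is exactly $s_k$, which is also the entry of $\vec s$ in position $(k-1)+1$, so the \emph{same} sign condition in \meqref{eq:tildepw} governs both $\tilde w_k([0,\vec s])$ and $\tilde w_{k-1}([\vec s])$.

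First I would dispose of the boundary case $k=m+1$, that is, when $k$ equals the depth of $[0,\vec s]$ (this includes $m=0$, where $[0,\vec s]=[0]$ and $\vec s={\bf 1}$). Here the first clause of \meqref{eq:tildepw} gives $\tilde w_{m+1}([0,\vec s])=w_{m+1}([0,\vec s])=w_m([\vec s])$, while $\tilde w_m([\vec s])=w_m([\vec s])$ since $m$ is the depth of $\vec s$; so the two sides agree.

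For $1\le k\le m$ I would split on the sign of $s_k$. If $s_k>0$, the first clause of \meqref{eq:tildepw} applies on both sides, giving $\tilde w_k([0,\vec s])=w_k([0,\vec s])=w_{k-1}([\vec s])$ and $\tilde w_{k-1}([\vec s])=w_{k-1}([\vec s])$. If $s_k\le 0$, the second clause applies on both sides, giving $\tilde w_k([0,\vec s])=w_{k+1}([0,\vec s])=w_k([\vec s])$ and $\tilde w_{k-1}([\vec s])=w_k([\vec s])$. In either case the two quantities coincide, which finishes the proof.

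There is no serious obstacle here; the only point demanding care is the off-by-one bookkeeping — that position $k+1$ of $[0,\vec s]$ carries the value $s_k$, and that the exceptional clause ``$j=m$'' in \meqref{eq:tildepw} becomes ``$k=m+1$'' after the shift, which is precisely the boundary case treated first.
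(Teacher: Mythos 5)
Your proof is correct and is exactly the direct verification from Definition~\eqref{eq:tildepw} that the paper leaves implicit (the paper simply asserts the lemma ``follows from the definition''). The index bookkeeping — that position $k+1$ of $[0,\vec s]$ carries $s_k$, so the same clause of the definition applies on both sides, together with the boundary case $k=m+1$ — is handled correctly.
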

  As we know, the extended shuffle product keeps depths, that is, for $\vec s \in \Z^m$ and $\vec t\in \Z^p$, then the depth of every term of $[\vec s]\shap [\vec t]$ equals to $m+p$.

\begin{prop}
\mlabel{lem:termgemin} For basis elements $[\vec s]$ and $[\vec t]$ with $\vec s \in \Z^m$ and $\vec t\in \Z^p$, if for $\vec{v}\in \Z^{m+p}$, the symbol $[\vec{v}]$ appears in
$$[\vec s]\shap [\vec t]=\sum _{\vec v}A_{\vec s,\vec t}^{\vec v} [\vec v]
$$
with $A_{\vec s,\vec t}^{\vec v}\not =0$, then for $1\le k\le m+p=d([\vec v])$, there is the lower bound of partial weights of $[\vec{v}]:$
\begin{equation}
\mlabel{eq:convergentmin}
w_k([\vec v])\ge {\rm min}\{\tilde w_i([\vec s])+\tilde w_j([\vec t])\}_{0\le i\le m, 0\le j\le p, i+j=k}.
\end{equation}
\end{prop}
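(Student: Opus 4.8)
The plan is to prove the lower bound \meqref{eq:convergentmin} by an induction that mirrors the recursive definition of $\shap$ in Definition~\mref{de:extsha}. Write $M_k([\vec s],[\vec t]):={\rm min}\{\tilde w_i([\vec s])+\tilde w_j([\vec t])\}_{0\le i\le m,\,0\le j\le p,\,i+j=k}$ for the quantity on the right of \meqref{eq:convergentmin}. The degenerate cases are immediate: if $\vec s={\bf 1}$ or $\vec t={\bf 1}$ then the product is $[\vec t]$ or $[\vec s]$ and the bound collapses to $w_k([\vec t])\ge \tilde w_k([\vec t])$, which is Remark~\mref{rem:wei>pwei}. So I would assume $m,p\ge 1$ and induct on the total depth $D=m+p$. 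For fixed $D$, the two ``boundary'' recursions---Case~1 ($s_1=0$) and Case~2 ($s_1>0$, $t_1=0$)---express $[\vec s]\shap[\vec t]$ as $[0,(\text{a product of total depth }D-1)]$, hence are controlled by the outer induction hypothesis. The three genuine recursions (Cases~3--5) preserve $D$ but lower the head of a factor, so I would add an inner induction: Case~5 ($s_1<0$) by induction on $|s_1|$, and Cases~3 and 4 ($s_1>0$) treated \emph{together} by induction on $s_1+|t_1|$. In each, the recursion either decreases the inner parameter within the same region, or falls into a Case~1/Case~2 situation and hence into a depth-$(D-1)$ product; this makes the nested induction well founded.

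The computations would rest on two elementary inputs. First, $I$ and $J$ change only the first entry, so $w_k(I[\vec w])=w_k([\vec w])+1$ and $w_k(J[\vec w])=w_k([\vec w])-1$ for $k\ge1$; and since $I,J$ are injective on basis elements, a basis element with nonzero coefficient in $I(A)+I(B)$ (resp. $J(A)-B$) is $I$ (resp. $J$, or the identity) applied to a basis element with nonzero coefficient in $A$ or $B$, which lets me disregard cancellations. Second, I would record a small lemma on the modified partial weights, proved by splitting on the sign of the first entry in \meqref{eq:tildepw}: for $1\le j\le m$ one has $\tilde w_j(I[\vec s])=\tilde w_j([\vec s])+1$ and $\tilde w_j(J[\vec s])=\tilde w_j([\vec s])-1$ exactly, whereas for $j=0$ only $\tilde w_0([\vec s])\le\tilde w_0(I[\vec s])\le\tilde w_0([\vec s])+1$ and $\tilde w_0([\vec s])-1\le\tilde w_0(J[\vec s])\le\tilde w_0([\vec s])$ hold, with the \emph{sharp} identity $\tilde w_0(I[\vec s])=\tilde w_0([\vec s])+1$ whenever $s_1<0$. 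I would also use Lemma~\mref{eq:kandk-1} (the shift $\tilde w_k([0,\vec s])=\tilde w_{k-1}([\vec s])$) and the fact $\tilde w_0([\vec s])\le0$ from Remark~\mref{rem:wei>pwei}.

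Granting these, each case is short. For Cases~1 and 2 a term of $[\vec s]\shap[\vec t]$ is $[0,\vec w]$ with $[\vec w]$ a term of a depth-$(D-1)$ product: when $k=1$ one has $w_1([0,\vec w])=0$, and $M_1\le0$ because one of the pairs $(i,j)$ contributing to it already has value $\le0$ (using $s_1=0$ or $t_1=0$, Lemma~\mref{eq:kandk-1}, and $\tilde w_0\le0$); when $k\ge2$ one uses $w_k([0,\vec w])=w_{k-1}([\vec w])$, the outer hypothesis, and the reindexing $\tilde w_{j+1}([0,\vec u])=\tilde w_j([\vec u])$ to recover $M_k([\vec s],[\vec t])$. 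For Cases~3--5 a term of $[\vec s]\shap[\vec t]$ equals $I$ or $J$ applied to a term $[\vec w]$ of a product covered by the inner hypothesis; substituting that hypothesis and using $w_k(I[\vec w])=w_k([\vec w])\pm1$, it remains to check \emph{term by term} (over the pairs $(i,j)$) that the shifted lower bound still dominates $M_k([\vec s],[\vec t])$. That check is exactly where the signs matter: applying $I$ to a factor would throw off the $j=0$ (or $i=0$) contribution by $1$ in the wrong direction, but in Cases~4 and 5 that very factor has negative first entry, so the sharp identity $\tilde w_0(I[\cdot])=\tilde w_0([\cdot])+1$ rescues it; in the $J$-direction the crude bound $\tilde w_0(J[\cdot])\ge\tilde w_0([\cdot])-1$ always suffices.

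The hard part will be the bookkeeping rather than any single idea: arranging the nested induction so the recursion tree is well founded---in particular noticing that Cases~3 and 4 must share the measure $s_1+|t_1|$, since a Case~4 step can feed into a Case~3 product---and handling the modified partial weight at the boundary index $0$, where its transformation under $I$ and $J$ is not a clean shift but the inequalities still close up for the reason above. Once the little lemma on $\tilde w_j(I\,\cdot)$ and $\tilde w_j(J\,\cdot)$ is established, verifying the five cases is routine.
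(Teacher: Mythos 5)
Your proposal is correct and follows essentially the same route as the paper: an outer induction on the total depth $m+p$, with the $s_1=0$ and $t_1=0$ recursions reducing to depth $m+p-1$, inner inductions on the head entries for the remaining regions, and the estimates closed by tracking how $\tilde w_j$ shifts under $I$ and $J$ --- your ``small lemma'' is precisely the inline computation the paper performs, including the sharp behaviour at the boundary index $j=0$ that rescues the $i=0$ and $j=0$ terms of the minimum. One harmless inaccuracy: the recursions for $R_3$ and $R_4$ never actually feed into each other (a Case 4 step only produces heads with $t_1+1\le 0$), so your combined measure $s_1+|t_1|$ is valid but not forced; the paper treats the region $s_1,t_1\ge 0$ by induction on $s_1+t_1$ and then $s_1\ge 0$, $t_1\le 0$ by induction on $|t_1|$ separately.
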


\begin{proof} Since ${\bf 1}\shap [\vec s]=[\vec s]=[\vec s]\shap {\bf 1}$, by Remark \ref{rem:wei>pwei}, we have
$$w_k([\vec s])\ge \tilde w_k([\vec s])=\tilde w_0({\bf 1})+\tilde w_k([\vec s])
$$
for $k\ge 1$. So we have the conclusion for $m=0$ or $p=0$. In general,
we prove the proposition by induction on $m+p\geq 0$, with the cases of $m+p =0,1$ checked directly.

 Now assume the conclusion holds for $2\le m+p \le \ell$. Then for $m+p=\ell+1$, we already know that, when $m=0$ or $p=0$, the conclusion is true, so we only need to consider the case $m\ge 1$ and $p\ge 1$. Let $(s_1, \vec s) \in \Z ^m$, $(t_1, \vec t)\in \Z ^p$.  We prove the conclusion for $[s_1, \vec s]$ and $[t_1, \vec t]$, assuming the conclusion for $m+p=\ell$. We divide the proof in four cases, with Case 1 serving as the initial step of the inductive  proofs of later cases.

\noindent
{\bf Case 1:} If $s_1=0$, let
$$[\vec s] \shap [t_1, \vec t]=\sum_{[\vec{v}]} a_{[\vec v]}[\vec v],
$$
with $a_{[\vec v]}\not =0$. Then
$$[0, \vec s]\shap [t_1,\vec t]=[0, [\vec s] \shap [t_1, \vec t]]=\sum_{[\vec{v}]} a_{[\vec v]}[0,\vec v].
$$
By the induction hypothesis (on $m+p-1$), for $k\ge 1$,
$$w_k([\vec v])\ge {\rm min}\{\tilde w_i([\vec s])+\tilde w_j([t_1, \vec t])\}_{ 0\le i\le m-1, 0\le j\le p, i+j=k}.
$$
So for $k\ge 2$, we obtain
\begin{align*}
w_k([0, \vec v])=&w_{k-1}([\vec v])\\
\ge& {\rm min}\{\tilde w_i([\vec s])+\tilde w_j([t_1, \vec t]) \}_{0\le i\le m-1, 0\le j\le p, i+j=k-1}\\
=&{\rm min}\{\tilde w_{i+1}([0,\vec s])+\tilde w_j([t_1, \vec t])\}_{0\le i\le m-1, 0\le j\le p, i+j=k-1}\quad (\text{by~Lemma~\mref{eq:kandk-1}})\\
=&{\rm min}\{\tilde w_{i}([0,\vec s])+\tilde w_j([t_1, \vec t])\}_{1\le i\le m, 0\le j\le p, i+j=k}\\
\ge& {\rm min}\{\tilde w_{i}([0,\vec s])+\tilde w_j([t_1, \vec t])\}_{0\le i\le m, 0\le j\le p, i+j=k}.
\end{align*}

Also, for $k=1$, we have
\begin{align*}
w_1 ([0, \vec v])&=0=w_1([0, \vec s])\ge \tilde w_1([0, \vec s]) \ge \tilde w_1([0, \vec s])+\tilde w_0([t_1, \vec t]) \\
&\ge \rmmin \{\tilde w_1([0, \vec s])+\tilde w_0([t_1, \vec t]), \tilde w_0([0, \vec s])+\tilde w_1([t_1, \vec t])\}.
\end{align*}
Thus the inductive step holds in this case.

\noindent
{\bf Case 2:} If $s_1\ge 0$ and $ t_1\ge 0$, then let
$$Z_n:=\{(s_1, t_1)\in \Z_{\ge 0}\times \Z_{\ge 0}\ |\ s_1+t_1=n\},
$$
and let $T$ be the subset of $\Z_{\ge 0}\times \Z_{\ge 0}$ such that every term appearing in $[s_1,\vec s]\shap [t_1,\vec t]$ with nonzero coefficient satisfies the conditions in (\mref{eq:convergentmin}). By the case of $s_1=0$, we conclude $Z_0\subset T$.

Now assuming $ Z_n\subset T$,  we consider $(s_1, t_1)\in Z_{n+1}$. We deal with this in three subcases.

\noindent
{\bf Subcase 2.1} When $s_1=0, t_1>0:$ This is already covered in Case 1.

\noindent
{\bf Subcase 2.2} When  $s_1>0, t_1=0:$ By definition,
$$[s_1, \vec s]\shap [0, \vec t]=[0, [s_1, \vec s]\shap [\vec t]].$$
So if
$$[s_1, \vec s]\shap [\vec t]=\sum a_{[\vec v]}[\vec v],$$
then
$$[s_1, \vec s]\shap [0, \vec t]=\sum a_{[\vec v]}[0,\vec v].$$
Thus
\begin{align*}
w_1([0, \vec v])&=0= w_1([0, \vec t])\ge \tilde w_1([0, \vec t])\ge \tilde w_0([s_1, \vec s])+\tilde w_1([0, \vec t])\\
&\ge \rmmin \{\tilde w_1([s_1, \vec s])+\tilde w_0([0, \vec t]), \tilde w_0([s_1, \vec s])+\tilde w_1([0, \vec t])\}.
\end{align*}
For $k\ge 2$, by the induction hypothesis that $Z_n\subset T$, we have
 \begin{align*}
   w_k([0, \vec v])&=w_{k-1}([\vec v])\\
   &\ge {\rm min}\{\tilde w_i([s_1,\vec s])+\tilde w_j([\vec t])\}_{0\le i\le m, 0\le j\le p-1, i+j=k-1}\\
   &\ge {\rm min}\{\tilde w_i([s_1,\vec s])+\tilde w_j([0,\vec t])\}_{0\le i\le m, 0\le j\le p, i+j=k},
\end{align*}
which is the conclusion in this subcase.

\noindent
{\bf Subcase 2.3} When $s_1>0, t_1>0:$ Definition ~\mref{de:extsha} gives
$$[s_1,\vec s]\shap [t_1, \vec t]=I([s_1-1,\vec s]\shap [t_1, \vec t]+[s_1, \vec s]\shap [t_1-1, \vec t]).
$$
Write
$$ [s_1-1, \vec s]\shap [t_1, \vec t]=\sum a_{[\vec v]}[\vec v].$$
 The induction hypothesis that $Z_n\subset T$ applies to this sum. So for $k\ge 1$, we have
\begin{align*}
w_k([\vec v]) &\ge  \rmmin \big \{\tilde w_k([s_1-1, \vec s])+\tilde w_0 ([t_1, \vec t]), \tilde w_0([s_1-1, \vec s])+\tilde w_k ([t_1, \vec t]),\\
&\qquad \qquad \qquad \tilde w_i([s_1, \vec s])-1+\tilde w_j ([t_1, \vec t]) \big \}_{1\le i\le m, 1\le j\le p, i+j=k}\\
&=  \rmmin \big \{\tilde w_k([s_1-1, \vec s]), \tilde w_k ([t_1, \vec t]),\\
&\qquad \qquad \qquad \tilde w_i([s_1, \vec s])-1+\tilde w_j ([t_1, \vec t])\big \}_{1\le i\le m, 1\le j\le p, i+j=k}.
\end{align*}
Then
\begin{align*}
w_k(I[\vec v]) \ge&  \rmmin \big \{\tilde w_k([s_1-1, \vec s])+1, \tilde w_k([t_1, \vec t])+1,\\
&\qquad \qquad \qquad \tilde w_i([s_1, \vec s])+\tilde w_j ([t_1, \vec t]) \big \}_{ 1\le i\le m, 1\le j\le p, i+j=k} \\
\ge& \rmmin \big \{\tilde w_k([s_1, \vec s])+\tilde w_0([t_1, \vec t]), \tilde w_0([s_1, \vec s])+\tilde w_k([t_1, \vec t]),\\
&\qquad \qquad \qquad \tilde w_i([s_1,\vec s])+\tilde w_j ([t_1, \vec t])\big \}_{1\le i\le m. 1\le j\le p, i+j=k},
\end{align*}
 where the first inequality follows from $w_k(I[\vec v])=w_k([\vec v])+1$, and the second inequality follows from $\tilde w_k([s_1-1, \vec s])+1=\tilde w_k([s_1, \vec s])\ge \tilde w_k([s_1, \vec s])+\tilde w_0([t_1, \vec t])$ and $\tilde w_k([t_1, \vec t])+1\ge \tilde w_k([t_1, \vec t])+\tilde w_0([s_1, \vec s])$. Thus every term from this part satisfies the condition. Similarly we have the conclusion for the terms from $I([s_1, \vec s]\shap [t_1-1, \vec t])$.

Therefore, $Z_{n+1}\subset T$. Thus $T=\Z_{\ge 0}\times \Z_{\ge 0}$, which means that the conclusion holds in Case 2.

\noindent
{\bf Case 3:} If $s_1 \ge 0$ and $t_1\le 0$, then denote
$$Z_n:=\{(s_1, t_1)\in \Z_{\ge 0}\times \Z_{\le 0} \ |\ t_1=-n\}.
$$
Let $T$ be the subset of $\Z_{\ge 0}\times \Z_{\le 0}$ such that every term $[\vec v]$ of $[s_1,\vec s]\shap [t_1,\vec t]$ satisfies the conditions in (\mref{eq:convergentmin}). We apply induction on $|t_1|$.
The initial step $t_1=0$ follows from Case 2, yielding $Z_0\subset T$.
If $t_1<0$, inductively assume  $Z_n\subset T$. Then for $(s_1, t_1)\in Z_{n+1}$, we have $(0, t_1)\in T$ by Case 1 of the proof. So we can assume $s_1>0$ and $t_1<0$. Then
$$[s_1,\vec s]\shap [t_1, \vec t]=J([s_1, \vec s]\shap [t_1+1,\vec t])-[s_1-1,\vec s]\shap [t_1+1, \vec t],
$$
here $(s_1, t_1+1), (s_1-1, t_1+1)\in Z_n$. So every term $[\vec v]$  of  $ [s_1, \vec s]\shap [t_1+1, \vec t]$  satisfies
$$w_k([\vec v]) \ge  \rmmin\{\tilde w_i([s_1, \vec s])+\tilde w_j([t_1+1, \vec t])\}_{i+j=k}.
$$
Then
 \begin{align*}
&w_k(J[\vec v]) \ge  \rmmin\{\tilde w_i([s_1, \vec s])+\tilde w_j([t_1+1, \vec t])\}_{0\le i\le m, 0\le j\le p, i+j=k}-1\\
=&\rmmin\{\tilde w_i([s_1, \vec s])+\tilde w_j([t_1, \vec t])\}_{0\le i\le m, 0\le j\le p, i+j=k}.
\end{align*}
At the same time, the induction hypothesis yields $(s_1-1, t_1+1)\in Z_n\subset T$.
Note that a term $[\vec u]$ of  $[s_1-1, \vec s]\shap [t_1+1, \vec t]$ is also a term of  $[s_1,\vec s]\shap [t_1, \vec t]$.  So it satisfies
 \begin{align*}
&w_k([\vec u]) \ge  {\rmmin}\{\tilde w_i([s_1-1, \vec s])+\tilde w_j([t_1+1, \vec t])\}_{0\le i\le m, 0\le j\le p, i+j=k}\\
=&\rmmin\{\tilde w_k([s_1-1, \vec s])+\tilde w_0([t_1+1, \vec t]), \tilde w_0([s_1-1, \vec s])+\tilde w_k([t_1+1, \vec t]), \\
&\qquad \qquad \qquad \tilde w_i([s_1-1, \vec s])+\tilde w_j([t_1+1, \vec t])\}_{1\le i\le m, 1\le j\le p, i+j=k}\\
=&\rmmin\{\tilde w_k([s_1-1, \vec s])+t_1+1, \tilde w_k([t_1+1, \vec t]),\\
 &\qquad \qquad \qquad \tilde w_i([s_1, \vec s])+\tilde w_j([t_1, \vec t])\}_{ 1\le i\le m, 1\le j\le p, i+j=k}\\
=&\rmmin\{\tilde w_k([s_1, \vec s])+\tilde w_0([t_1, \vec t]), \tilde w_0([s_1, \vec s])+\tilde w_k([t_1+1, \vec t]),\\
&\qquad \qquad \qquad \tilde w_i([s_1, \vec s])+\tilde w_j([t_1, \vec t])\}_{ 1\le i\le m, 1\le j\le p, i+j=k}\\
\ge&\rmmin\{\tilde w_k([s_1, \vec s])+\tilde w_0([t_1, \vec t]), \tilde w_0([s_1, \vec s])+\tilde w_k([t_1, \vec t]),\\
&\qquad \qquad \qquad \tilde w_i([s_1, \vec s])+\tilde w_j([t_1, \vec t])\}_{ 1\le i\le m, 1\le j\le p, i+j=k}\\
=&\rmmin\{\tilde w_i([s_1, \vec s])+\tilde w_j([t_1, \vec t])\}_{0\le i\le m, 0\le j\le p, i+j=k}.
\end{align*}
Hence  $Z_{n+1}\subset T$. So $T=Z_{\ge 0}\times \Z_{\le 0}$.

{\bf Case 4:} If $s_1\le 0$, then denote
$$Z_n:=\{(s_1, t_1)\in \Z_{\le 0}\times \Z \ |\ s_1=-n\}.
$$
Let $T$ be the subset of $\Z_{\le 0}\times \Z$ such that every term of $[s_1,\vec s]\shap [t_1,\vec t]$ satisfies the condition in  Eq. (\mref{eq:convergentmin}). We apply induction on $|s_1|\geq 0$.

The initial step $s_1=0$ is already considered in Case 1, so we have $Z_0\subset T$.
For the induction step of $s_1<0$, assume $Z_n\subset T$ for $n\geq 0$ and consider $(s_1, t_1)\in Z_{n+1}$.
First we have
$$[s_1,\vec s]\shap [t_1, \vec t]=J([s_1+1, \vec s]\shap [t_1,\vec t])-[s_1+1,\vec s]\shap [t_1-1, \vec t].
$$
By the induction  hypothesis on $|s_1|$,
every term $[\vec v]$  of  $[s_1+1, \vec s]\shap [t_1, \vec t]=\sum a_{[\vec v]}[\vec v]$  satisfies
$$w_k([\vec v])\ge \rmmin\{\tilde w_i([s_1+1, \vec s])+\tilde w_j([t_1, \vec t])\}_{0\le i\le m, 0\le j\le p, i+j=k}
$$
for each $1\le k\le \ell+1$. So applying $J$ gives
\begin{align*}
w_k(J[\vec v]) &\ge  \rmmin\{\tilde w_i([s_1+1, \vec s])+\tilde w_j([t_1, \vec t])\}_{0\le i\le m, 0\le j\le p, i+j=k}-1\\
&=\rmmin\{s_1+\tilde w_k([t_1, \vec t]), \tilde w_i([s_1, \vec s])+\tilde w_j([t_1, \vec t])\}_{ 1\le i\le m, 0\le j\le p, i+j=k}\\
&=\rmmin\{\tilde w_i([s_1, \vec s])+\tilde w_j([t_1, \vec t])\}_{0\le i\le m, 0\le j\le p, i+j=k}.
\end{align*}
So these terms satisfy Eq.~\meqref{eq:convergentmin}.

Also by the induction hypothesis, every term $[\vec u]$  of  $[s_1+1, \vec s]\shap [t_1-1, \vec t]=\sum a_{[\vec u]}[\vec u]$ satisfies
$$w_k([\vec u]) \ge  \rmmin\{\tilde w_i([s_1+1, \vec s])+\tilde w_j([t_1-1, \vec t])\}_{0\le i\le m, 0\le j\le p, i+j=k}
$$
for each $1\le k\le \ell+1$. To show that they also satisfy Eq.~\meqref{eq:convergentmin}, we consider two subcases.

\begin{enumerate}
\item ({\bf Subcase 3.1}) When $t_1\ge 1$, we have
\begin{align*}
w_k([\vec u]) \ge&  \rmmin\{\tilde w_k([s_1+1, \vec s]), s_1+1+\tilde w_k([t_1-1, \vec t]),\\
&\qquad \qquad \qquad \tilde w_i([s_1+1, \vec s])+\tilde w_j([t_1-1, \vec t])\}_{1\le i\le m, 1\le j\le p, i+j=k}\\
\ge& \rmmin\{\tilde w_k([s_1, \vec s])+\tilde w_0([t_1, \vec t]), \tilde w_0([s_1, \vec s])+\tilde w_k([t_1, \vec t]),\\
&\qquad \qquad \qquad \tilde w_i([s_1, \vec s])+\tilde w_j([t_1, \vec t])\}_{1\le i\le m, 1\le j\le p, i+j=k}\\
=& \rmmin\{\tilde w_i([s_1, \vec s])+\tilde w_j([t_1, \vec t])\}_{0\le i\le m, 0\le j\le p, i+j=k}.
\end{align*}
This is what we need.
\item ({\bf Subcase 3.2}) When $t_1<1$, we have
\begin{align*}
w_k([\vec u]) \ge&  \rmmin\{\tilde w_k([s_1+1, \vec s])+t_1-1, s_1+1+\tilde w_k([t_1-1, \vec t]),\\
&\qquad \qquad \qquad \tilde w_i([s_1+1, \vec s])+\tilde w_j([t_1-1, \vec t])\}_{1\le i\le m, 1\le j\le p, i+j=k}\\
=& \rmmin\{\tilde w_k([s_1, \vec s])+\tilde w_0([t_1, \vec t]), \tilde w_0([s_1, \vec s])+\tilde w_k([t_1, \vec t]),\\
&\qquad \qquad \qquad \tilde w_i([s_1, \vec s])+\tilde w_j([t_1, \vec t])\}_{1\le i\le m, 1\le j\le p, i+j=k}\\
=& \rmmin\{\tilde w_i([s_1, \vec s])+\tilde w_j([t_1, \vec t])\}_{0\le i\le m, 0\le j\le p, i+j=k}.
\end{align*}
\end{enumerate}
In summary, we have prove $Z_{n+1}\subset T$. Hence the induction gives $T=\Z_{\le 0}\times \Z$.
This completes the proof.
\end{proof}

We finally obtain
\begin{theorem}
\mlabel{thm:CAlg}
$(\calh_{\Z}^0, \shap)$ is a subalgebra of $(\calha, \shap)$.
\end{theorem}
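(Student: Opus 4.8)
The plan is to derive the theorem directly from the partial-weight estimate of Proposition~\mref{lem:termgemin}, so that the only real content left is an elementary inequality for the modified partial weights of convergent points. First I would dispose of the trivial cases: $\calh_{\Z}^0$ is by construction a subspace of $\calha$ containing ${\bf 1}$, and ${\bf 1}\shap[\vec t]=[\vec t]=[\vec t]\shap{\bf 1}$, so it suffices to show that $[\vec s]\shap[\vec t]\in\calh_{\Z}^0$ for convergent basis elements $[\vec s],[\vec t]$ of positive depth, say $\vec s\in\Z^m$ and $\vec t\in\Z^p$ with $m,p\ge 1$. Since $\shap$ is graded for the depth grading~\meqref{eq:dgrade}, every $[\vec v]$ occurring in $[\vec s]\shap[\vec t]$ lies in $\Z^{m+p}$, so I must show $w_k([\vec v])>k$ for $1\le k\le m+p$, which is exactly the defining condition for membership in $\calh_{\Z}^0$.

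The key preliminary observation I would isolate is the behaviour of $\tilde w$ on a convergent point. If $\vec s\in\Z^m$ with $m\ge 1$ satisfies $w_j([\vec s])>j$ for $1\le j\le m$, then $s_1=w_1([\vec s])>1>0$, so by~\meqref{eq:tildepw} (and Remark~\mref{rem:wei>pwei}) one gets $\tilde w_0([\vec s])=w_0([\vec s])=0$, while for $1\le i\le m$ either $\tilde w_i([\vec s])=w_i([\vec s])\ge i+1$ (when $s_{i+1}>0$ or $i=m$) or $\tilde w_i([\vec s])=w_{i+1}([\vec s])\ge i+2$ (when $s_{i+1}\le 0$). Thus for any convergent point of positive depth, $\tilde w_0=0$ and $\tilde w_i\ge i+1$ for $1\le i\le m$, and likewise for $\vec t$.

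Then I would simply invoke Proposition~\mref{lem:termgemin}: for $1\le k\le m+p$,
$$w_k([\vec v])\ \ge\ \rmmin\{\tilde w_i([\vec s])+\tilde w_j([\vec t])\}_{0\le i\le m,\ 0\le j\le p,\ i+j=k},$$
and check that every admissible pair $(i,j)$ with $i+j=k\ge 1$ contributes more than $k$: if $i=0$ then $j=k\ge1$ and the summand is $\tilde w_k([\vec t])\ge k+1$; symmetrically if $j=0$; and if $i,j\ge 1$ it is $\ge(i+1)+(j+1)=k+2$. Hence $w_k([\vec v])\ge k+1>k$, so $[\vec v]\in\calh_{\Z}^0$, giving $\calh_{\Z}^0\shap\calh_{\Z}^0\subseteq\calh_{\Z}^0$ and thus the subalgebra claim.

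I do not anticipate a genuine obstacle at this stage: essentially all of the difficulty has been front-loaded into Proposition~\mref{lem:termgemin}, whose modified weights $\tilde w_j$ are engineered precisely so that the coarse estimates produced by the five recursions of Definition~\mref{de:extsha} are still strong enough after passing to honest partial weights. The only point that needs a moment's care in the argument above is the boundary index $i=0$ (resp.\ $j=0$): it is exactly here that one uses that convergence forces $s_1>1$, guaranteeing $\tilde w_0=0$ rather than a negative number, so that the pairs involving a zero index still contribute at least $k+1$.
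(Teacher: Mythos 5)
Your proposal is correct and follows essentially the same route as the paper: both reduce the theorem to the partial-weight estimate of Proposition~\mref{lem:termgemin} and then verify by an elementary case check that $\tilde w_i([\vec s])+\tilde w_j([\vec t])>k$ for all admissible $(i,j)$ with $i+j=k$. Your reorganization---first establishing $\tilde w_0=0$ and $\tilde w_i\ge i+1$ for convergent points and then splitting only on whether an index vanishes---is a slightly cleaner packaging of the paper's four-way sign analysis on $s_{i+1}$ and $t_{j+1}$, but it is the same argument.
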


\begin{proof} Obviously,
$${\bf 1}\shap a=a\shap {\bf 1}=a \in \calh_{\Z}^0, \quad a\in \calh_{\Z}^0.
$$

For $(s_1, \vec s)\in \Z ^m$ and $(t_1, \vec t)\in \Z ^p$, if $[s_1, \vec s], [t_1, \vec t] \in \calh_{\Z}^0$, then for $1\le i\le m$ and $1\le j\le p$, we have
$$w_i([s_1, \vec s])>i, \ w_j([t_1, \vec t])>j.
$$
By  Proposition  \mref{lem:termgemin},  every term  $[\vec v]$ of $[s_1, \vec s]\shap [t_1,\vec t]=\sum a_{[\vec v]}[\vec v]$ satisfies the conditions
$$
w_k([\vec v])\ge \rmmin\{\tilde w_i([s_1, \vec s])+\tilde w_j([t_1, \vec t])\}_{0\le i\le m, 0\le j\le p, i+j=k}, \quad 1\le k\le m+p.
$$
 With the notations in Eq.~\meqref{eq:tildepw}, we obtain
{\small \begin{align*}
&\tilde w_i([s_1, \vec s])+\tilde w_j([t_1, \vec t])\\
=&\left\{\begin{array}{ll}
w_i([s_1, \vec s])+w_j([t_1, \vec t])>i+j=k, &\text{if either } s_{i+1}>0\ \text{ or}\ i=m,
 \text{ and either } t_{j+1}>0 \ {\rm or}\ j=p,\\
w_{i+1}([s_1, \vec s])+w_j([t_1, \vec t])>k+1, &\text{if } s_{i+1}\le 0, \text{ and either } t_{j+1}>0\ {\rm or}\ j=p,\\
w_{i}([s_1, \vec s])+w_{j+1}([t_1, \vec t])>k+1, &\text{if either } s_{i+1}>0 \text{ or}\ i=m, \text{ and } t_{j+1}\le 0,\\
w_{i+1}([s_1, \vec s])+w_{j+1}([t_1, \vec t])>k+2, &\text{if }  s_{i+1}\le 0 \text{ and } t_{j+1}\le 0.
\end{array}\right.
\end{align*}}
So the minimal is always greater than $k$. Thus the multiple zeta series
$$\zeta (\vec v)=\sum_{x_i\in \Z _{>0}}\frac{1}{(x_1+\cdots+x_k)^{s_1}\cdots x_k^{s_k}}$$
converges, and hence
$$[s_1,\vec s]\shap [t_1,\vec t]\in \calh_{\Z}^0.$$
Therefore, $(\calh_{\Z}^0, \shap)$ is a subalgebra of $(\calha, \shap)$.
\end{proof}

\subsection {Algebra homomorphism}
By Theorem  \mref{thm:CAlg}, $(\calh_{\Z}^{0}, \shap)$ is an algebra. The assignment
$$\zeta([s_1,\cdots, s_k])=\sum_{x_i\in \Z _{>0}}\frac{1}{(x_1+\cdots+x_k)^{s_1}\cdots x_k^{s_k}}, \quad [s_1,\cdots, s_k]\in \calh_{\Z}^{0},
$$
defines a linear map from $\calh_{\Z}^{0}$ to $\R$. For the variable set $\{x_i\}_{i\in \Z _{\ge 1}}$, define
$$S_c^{\mathrm{Ch}}:=\Big\{{\bf1}, \wvec{s_1, \cdots, s_m}{i_1, \cdots, i_m}\in \sch\ \Big |\ s_1+\cdots+s_j>j, j=1,\cdots, m\Big\},
$$
$$F_c^{\mathrm{Ch}}:=\Big\{{\bf1}, f \wvec{s_1, \cdots, s_m}{x_{i_1},\cdots, x_{i_m}}\in\fch\ \Big|\ s_1+\cdots+s_j>j, j=1,\cdots, m\Big\}.
$$

\begin{prop}\label{prop:Chenconve}
$(\Q S_c^{\mathrm{Ch}}, \shap)$ is a locality subalgebra of $(\Q \sch, \shap)$, and $(\Q F_c^{\mathrm{Ch}}, \cdot)$ is a subalgebra of $(\Q \fch, \cdot)$.
\end{prop}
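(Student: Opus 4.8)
The plan is to reduce both assertions to the partial-weight estimate that is already available in $\calha$. For the first statement I would first establish the exact analogue of Proposition~\mref{lem:termgemin} for the Chen-symbol product of Definition~\mref{de:prodsymb}: if $\wvec{\vec s}{\vec u}$ and $\wvec{\vec t}{\vec v}$ are Chen symbols with $\vec s\in\Z^m$, $\vec t\in\Z^p$, then every basis element $\wvec{\vec r}{\vec w}$ occurring with nonzero coefficient in $\wvec{\vec s}{\vec u}\shap\wvec{\vec t}{\vec v}$ satisfies
$$
w_k([\vec r])\ \ge\ \rmmin\{\tilde w_i([\vec s])+\tilde w_j([\vec t])\}_{0\le i\le m,\,0\le j\le p,\,i+j=k},\qquad 1\le k\le m+p .
$$
The proof of this is word-for-word the proof of Proposition~\mref{lem:termgemin}: Definition~\mref{de:prodsymb} is, region by region, a verbatim transcription of Definition~\mref{de:extsha} acting on the upper rows, with $I^S,J^S$ shifting upper-row partial weights by $\pm1$ exactly as $I,J$ do, while the lower rows are merely shuffled (Proposition~\mref{p:tworowprod}) and are invisible to the weight bookkeeping. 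I would simply remark that the induction of Proposition~\mref{lem:termgemin} never inspects the weight-free coordinates and therefore transfers unchanged; one cannot instead push the estimate down through the homomorphism $\Phi$ of Proposition~\mref{prop:phihomo}, because $\Phi$ can introduce cancellation among terms sharing an upper row, so the estimate genuinely has to be read off at the level of Chen symbols.

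Granting this estimate, closure of $\Q S_c^{\mathrm{Ch}}$ under $\shap$ is immediate from the arithmetic already carried out in the proof of Theorem~\mref{thm:CAlg}: if $\wvec{\vec s}{\vec u},\wvec{\vec t}{\vec v}\in S_c^{\mathrm{Ch}}$, then $w_i([\vec s])>i$ and $w_j([\vec t])>j$ for all relevant $i,j$, and the four-case comparison of modified partial weights in that proof gives $\rmmin\{\tilde w_i([\vec s])+\tilde w_j([\vec t])\}_{i+j=k}>k$, so $w_k([\vec r])>k$ for every term $\wvec{\vec r}{\vec w}$, i.e.\ $\wvec{\vec r}{\vec w}\in S_c^{\mathrm{Ch}}$. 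Since ${\bf 1}\in S_c^{\mathrm{Ch}}$ and $\Q S_c^{\mathrm{Ch}}$ is visibly a locality subspace of $(\Q\sch,\top)$, equipping it with the restricted locality relation exhibits $(\Q S_c^{\mathrm{Ch}},\shap)$ as a locality subalgebra of $(\Q\sch,\shap)$.

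For $(\Q F_c^{\mathrm{Ch}},\cdot)$ I would transport this along the locality algebra homomorphism $F$ of Proposition~\mref{prop:zhomo}\meqref{it:zhomo2}. By construction $F_c^{\mathrm{Ch}}=F(S_c^{\mathrm{Ch}})$ — the convergence condition on the exponents is literally the same on the two sides — so $\Q F_c^{\mathrm{Ch}}=F(\Q S_c^{\mathrm{Ch}})$ is a locality subspace of $(\Q\fch,\top)$. For closure under the locality product it suffices to treat basis Chen fractions $g=f\wvec{\vec s}{x_{\vec u}}$ and $h=f\wvec{\vec t}{x_{\vec v}}$ with disjoint variable index sets; for these, $g\top h$ and $\wvec{\vec s}{\vec u}\top\wvec{\vec t}{\vec v}$, so
$$
g\cdot h = F(\wvec{\vec s}{\vec u})\cdot F(\wvec{\vec t}{\vec v}) = F(\wvec{\vec s}{\vec u}\shap\wvec{\vec t}{\vec v}) \in F(\Q S_c^{\mathrm{Ch}}) = \Q F_c^{\mathrm{Ch}},
$$
using the first part to see $\wvec{\vec s}{\vec u}\shap\wvec{\vec t}{\vec v}\in\Q S_c^{\mathrm{Ch}}$ when $g,h$ are convergent. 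Extending bilinearly over the locality relation then gives that $\Q F_c^{\mathrm{Ch}}$ is closed under $\cdot$, and it inherits the locality relation from $\Q\fch$, so it is a locality subalgebra.

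The step I expect to require real care is this last one: because generalized Chen fractions are not linearly independent (e.g.\ $f\wvec{0}{x_i}=1$), an element of $\Q F_c^{\mathrm{Ch}}$ has many expressions as a combination of basis fractions, and a $\top$-relation $g\top h$ need not force $g_\alpha\top h_\beta$ summand-by-summand. I would handle this exactly as in the proof that $(\Q\fch,\top,\cdot)$ is a locality algebra, namely by reducing a given $\top$-related pair to expansions whose basis-fraction index sets are pairwise disjoint; this is legitimate because the abstract product $\shap$ on $\Q\sch$ depends only on the pattern of a Chen symbol and not on the particular indices, and $F$ is equivariant under relabelling of indices. Everything else in the argument is a routine transcription of material already present in Sections~\mref{s:shuf}--\mref{s:conv}.
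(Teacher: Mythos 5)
Your proposal is correct and follows essentially the same route as the paper: the paper likewise disposes of the first assertion by observing that the argument of Theorem~\mref{thm:CAlg} (hence the partial-weight estimate of Proposition~\mref{lem:termgemin}, which only ever sees the upper rows) carries over verbatim to the Chen-symbol product, and obtains the second assertion by noting $\Q F_c^{\mathrm{Ch}}=F(\Q S_c^{\mathrm{Ch}})$ and invoking the locality algebra homomorphism $F$ of Proposition~\mref{prop:zhomo}. Your closing caveat about linear dependence among generalized Chen fractions is a reasonable point of care, but it is handled at the same level of detail as (and no worse than) the paper's own treatment.
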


\begin{proof}
An argument similar to the proof of Theorem \mref{thm:CAlg} shows that $(\Q S_c^{\mathrm{Ch}}, \shap)$ is a locality subalgebra. It is clear that $\Q F_c^{\mathrm{Ch}}=F(\Q S_c^{\mathrm{Ch}})$ for the map $F$ defined in Eq.~\meqref{eq:fmap}. Then by Proposition \mref{prop:zhomo}, $\Q F_c^{\mathrm{Ch}}$ is a subalgebra of $\Q\fch$.
\end{proof}

Now we define a linear map
\begin{align*}
\eta:  \Q F_c^{\mathrm{Ch}}&\longrightarrow \R, \\
{\bf 1}&\longmapsto 1, \\
f\wvec{s_1, \cdots, s_m}{x_{i_1}, \cdots, x_{i_m}}& \longmapsto \sum_{x_{i_j>0}} \frac{1}{(x_{i_1}+\cdots+x_{i_k})^{s_1}\cdots x_{i_k}^{s_k}}.
\end{align*}
Then
\begin{lemma}
\mlabel{lemma:eta}
For $a, b\in \Q F_c^{\mathrm{Ch}}$, if $a\top b$, then $\eta$ is a locality algebra homomorphism$:$
$$\eta(a b)=\eta(a)\eta(b).
$$
\end{lemma}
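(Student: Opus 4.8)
The plan is to reduce to generalized Chen fractions and then combine the product decomposition furnished by the locality algebra $\Q\fch$ with a Fubini-type argument for the summations. Since $\eta$ and the product are bilinear, it suffices to treat basis elements; the case where one of $a,b$ equals ${\bf 1}$ is immediate from $\eta({\bf 1})=1$. So assume $a=f\wvec{s_1,\ldots,s_m}{x_{i_1},\ldots,x_{i_m}}$ and $b=f\wvec{t_1,\ldots,t_p}{x_{j_1},\ldots,x_{j_p}}$ lie in $F_c^{\mathrm{Ch}}$ with $a\top b$, that is, $I:=\{i_1,\ldots,i_m\}$ and $J:=\{j_1,\ldots,j_p\}$ are disjoint. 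First I would record that $\eta(a)=\sum_{x_i>0,\,i\in I} a$ and $\eta(b)=\sum_{x_j>0,\,j\in J} b$ are series of strictly positive terms (numerators equal to $1$, denominators products of positive integers), convergent by the defining partial-weight inequalities of $F_c^{\mathrm{Ch}}$, hence absolutely convergent.

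Next I would describe $a\cdot b$ as a rational function. Let $\wvec{\vec s}{\vec u}$ and $\wvec{\vec t}{\vec v}$ be the Chen symbols underlying $a$ and $b$; they lie in $S_c^{\mathrm{Ch}}$ and satisfy $\wvec{\vec s}{\vec u}\top\wvec{\vec t}{\vec v}$, since $I\cap J=\emptyset$. By Proposition~\mref{prop:zhomo}\meqref{it:zhomo2} we have $a\cdot b=F(\wvec{\vec s}{\vec u}\shap\wvec{\vec t}{\vec v})$; by Proposition~\mref{prop:Chenconve}, $\wvec{\vec s}{\vec u}\shap\wvec{\vec t}{\vec v}\in\Q S_c^{\mathrm{Ch}}$; and by Proposition~\mref{p:tworowprod} every Chen symbol occurring there has bottom row a shuffle of $\vec u$ and $\vec v$, hence bottom index set $I\cup J$. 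Applying $F$, I obtain
$$ a\cdot b=\sum_{w} c_w\, g_w, $$
a finite $\Q$-linear combination of \emph{convergent} Chen fractions $g_w\in F_c^{\mathrm{Ch}}$, each a function of the variables $\{x_\ell : \ell\in I\cup J\}$.

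Now apply $\eta$. Each $\eta(g_w)=\sum_{x_\ell>0,\,\ell\in I\cup J} g_w$ is an absolutely convergent positive series, so the finite sum $\sum_w$ may be interchanged with the summation over $I\cup J$; then, using the identity $\sum_w c_w g_w=a\cdot b$ of rational functions (valid at every positive-integer tuple, where no pole is met),
$$ \eta(a\cdot b)=\sum_{x_\ell>0,\,\ell\in I\cup J}\Big(\sum_w c_w\, g_w\Big)=\sum_{x_\ell>0,\,\ell\in I\cup J} a\,b. $$
Finally I would factor this last sum: $a$ depends only on $\{x_i : i\in I\}$, $b$ only on $\{x_j : j\in J\}$, the index sets are disjoint, and all summands are positive, so Tonelli's theorem yields
$$ \sum_{x_\ell>0,\,\ell\in I\cup J} a\,b=\Big(\sum_{x_i>0,\,i\in I} a\Big)\Big(\sum_{x_j>0,\,j\in J} b\Big)=\eta(a)\,\eta(b), $$
which is the claim. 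The main obstacle is not any single computation but the convergence bookkeeping: one needs to know \emph{beforehand} that $a\cdot b$ splits into \emph{convergent} Chen fractions, which is exactly the content of Proposition~\mref{prop:Chenconve} (resting in turn on the partial-weight estimate of Proposition~\mref{lem:termgemin}), so that the termwise rearrangement and the Tonelli factorization are both legitimate; positivity of the fraction values at positive-integer arguments is what makes these rearrangements unconditional.
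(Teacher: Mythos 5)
Your proposal is correct and follows essentially the same route as the paper: decompose the product at the level of Chen symbols via Propositions~\mref{p:tworowprod}, \mref{prop:zhomo} and \mref{prop:Chenconve}, push the decomposition through $F$, apply $\eta$ termwise, interchange the finite linear combination with the summations, recombine to the product of fractions, and factor the sum over the disjoint variable sets. Your version merely makes explicit the positivity/absolute-convergence justification for the interchange and the Tonelli factorization, which the paper leaves implicit.
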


\begin{proof}
We only need to prove the identity on basis elements. If $a={\bf 1}$ or $b={\bf 1}$, the conclusion holds obviously. Otherwise, let
$$a=f\wvec{s_1, \cdots, s_m}{x_{i_1}, \cdots, x_{i_m}}, b=f\wvec{t_1, \cdots, t_n}{x_{j_1}, \cdots, x_{j_n}},
$$
where $\{i_1, \cdots, i_m\}\cap \{j_1, \cdots, j_n\}=\emptyset$. Since $f\wvec{s_1, \cdots, s_m}{x_{i_1}, \cdots, x_{i_m}}$, $f\wvec{t_1, \cdots, t_n}{x_{j_1}, \cdots, x_{j_n}}\in F_c^{\mathrm{Ch}}$, by Proposition \ref{prop:Chenconve},
$$f\wvec{s_1, \cdots, s_m}{x_{i_1}, \cdots, x_{i_m}}\cdot f\wvec{t_1, \cdots, t_n}{y_{j_1}, \cdots, y_{j_n}}\in \Q F_c^{\mathrm{Ch}}.
$$

Let
\begin{equation*}
\wvec{s_1, \cdots, s_m}{{i_1}, \cdots,{i_m}} \shap \wvec{t_1, \cdots, t_n}{{j_1}, \cdots, {j_n}}=\sum A_{\wvec{u_1, \cdots, u_{m+n}}{\ell_1, \cdots, \ell_{m+n}}}\wvec{u_1, \cdots, u_{m+n}}{\ell_1, \cdots, \ell_{m+n}}
\end{equation*}
with
$A_{\wvec{u_1, \cdots, u_{m+n}}{\ell_1, \cdots, \ell_{m+n}}}\in \Z$ and $\ell_1, \cdots, \ell_{m+n}$ a shuffle of $i_1, \cdots , i_m, j_1, \cdots, j_n$ by Proposition~\mref{p:tworowprod}.
Then apply $F$, by Proposition \ref{prop:zhomo}, we have
\begin{equation}\label{eq:Symbst}
f\wvec{s_1, \cdots, s_m}{x_{i_1}, \cdots, x_{i_m}}\cdot f\wvec{t_1, \cdots, t_n}{x_{j_1}, \cdots, x_{j_n}}=\sum A_{\wvec{u_1, \cdots, u_{m+n}}{\ell_1, \cdots, \ell_{m+n}}}f\wvec{u_1, \cdots, u_{m+n}}{x_{\ell_1}, \cdots, x_{\ell_{m+n}}}.
\end{equation}
Thus
\begin{equation*}
\begin{split}
&\eta (f\wvec{s_1, \cdots, s_m}{x_{i_1}, \cdots, x_{i_m}}\cdot f\wvec{t_1, \cdots, t_n}{x_{j_1}, \cdots, x_{j_n}})\overset{(\ref{eq:Symbst})}{=}\eta \bigg(\sum A_{\wvec{u_1, \cdots, u_{m+n}}{\ell_1, \cdots, \ell_{m+n}}}f\wvec{u_1, \cdots, u_{m+n}}{x_{\ell_1}, \cdots, x_{\ell_{m+n}}}\bigg)\\
=&\sum A_{\wvec{u_1, \cdots, u_{m+n}}{\ell_1, \cdots, \ell_{m+n}}} \eta (f\wvec{u_1, \cdots, u_{m+n}}{x_{\ell_1}, \cdots, x_{\ell_{m+n}}})\\
=&\sum A_{\wvec{u_1, \cdots, u_{m+n}}{\ell_1, \cdots, \ell_{m+n}}} \sum _{x_{i_1}, \cdots x_{i_m}, x_{j_1}, \cdots , x_{j_n}} f\wvec{u_1, \cdots, u_{m+n}}{x_{\ell_1}, \cdots, x_{\ell_{m+n}}}\\
=&\sum _{x_{i_1}, \cdots x_{i_m}, x_{j_1}, \cdots , x_{j_n}} \sum A_{\wvec{u_1, \cdots, u_{m+n}}{\ell_1, \cdots, \ell_{m+n}}}  f\wvec{u_1, \cdots, u_{m+n}}{x_{\ell_1}, \cdots, x_{\ell_{m+n}}}\\
\overset{(\ref{eq:Symbst})}{=}&\sum _{x_{i_1}, \cdots x_{i_m}, x_{j_1}, \cdots , x_{j_n}} f\wvec{s_1, \cdots, s_m}{x_{i_1}, \cdots, x_{i_m}}\cdot f\wvec{t_1, \cdots, t_n}{x_{j_1}, \cdots, x_{j_n}}\\
=&\eta(f\wvec{s_1, \cdots, s_m}{x_{i_1}, \cdots, x_{i_m}})\eta(f\wvec{t_1, \cdots, t_n}{x_{j_1}, \cdots, x_{j_n}}).
\end{split}
\end{equation*}
Hence $\eta$ is a locality algebra homomorphism.
\end{proof}

 So we have the following diagram which is commutative by directly checking
$\zeta^\shap \Phi=\eta F$.
  $$\xymatrix{
 \Q S_c^{\mathrm{Ch}} \ar[d]_{F} \ar[rr]^{\Phi}
                && \calh_{\Z}^{0} \ar[d]^{\zeta^\shap}  \\
  \Q F_c^{\mathrm{Ch}} \ar[rr]_{\eta}
                && \R  .         }$$

\begin{theorem}
\mlabel {thm:zhomo} The linear map $\zeta^\shap: \calh _{\Z }^0\to \R$ is an algebra homomorphism.
\end{theorem}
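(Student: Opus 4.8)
The plan is to chase the commutative square displayed immediately above the statement. By linearity of $\zeta^\shap$ and bilinearity of $\shap$, it suffices to prove $\zeta^\shap([\vec s]\shap[\vec t])=\zeta^\shap([\vec s])\,\zeta^\shap([\vec t])$ for basis elements $[\vec s],[\vec t]\in\calh_\Z^0$, and the case in which one factor is ${\bf 1}$ is immediate since $\zeta^\shap({\bf 1})=1$. So assume $\vec s\in\Z^m$, $\vec t\in\Z^p$ are convergent.

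Two ingredients are needed. First, $\Phi$ restricts to a surjective algebra homomorphism $\Q S_c^{\mathrm{Ch}}\to\calh_\Z^0$: a Chen symbol $\wvec{\vec s}{\vec u}$ lies in $S_c^{\mathrm{Ch}}$ exactly when $\vec s$ is convergent, every convergent $[\vec s]$ equals $\Phi\wvec{s_1,\dots,s_m}{1,\dots,m}$, and $\Q S_c^{\mathrm{Ch}}$ is closed under $\shap$ by Proposition~\mref{prop:Chenconve}, so the homomorphism property is inherited from Proposition~\mref{prop:phihomo}. Second, the square
$$\zeta^\shap\circ\Phi=\eta\circ F\qquad\text{on }\Q S_c^{\mathrm{Ch}}$$
commutes: on a basis Chen symbol $\wvec{s_1,\dots,s_k}{i_1,\dots,i_k}$ the left side is $\zeta(s_1,\dots,s_k)=\sum_{x_j\in\Z_{>0}}\big((x_1+\dots+x_k)^{s_1}\cdots x_k^{s_k}\big)^{-1}$, the right side is $\eta\big(f\wvec{s_1,\dots,s_k}{x_{i_1},\dots,x_{i_k}}\big)=\sum_{x_{i_j}\in\Z_{>0}}\big((x_{i_1}+\dots+x_{i_k})^{s_1}\cdots x_{i_k}^{s_k}\big)^{-1}$, and these coincide after relabelling the dummy summation variables; both series converge absolutely because $\vec s$ is convergent and all summands are positive.

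Now choose index tuples $\vec u\in\Z_{\ge1}^m$, $\vec v\in\Z_{\ge1}^p$ with the $m+p$ entries of $\vec u,\vec v$ pairwise distinct, so that $\wvec{\vec s}{\vec u},\wvec{\vec t}{\vec v}\in S_c^{\mathrm{Ch}}$ and $\wvec{\vec s}{\vec u}\top\wvec{\vec t}{\vec v}$; then also $F\wvec{\vec s}{\vec u}\top F\wvec{\vec t}{\vec v}$ since their variable sets are disjoint, and both lie in $\Q F_c^{\mathrm{Ch}}$. Using in turn that $\Phi$ is an algebra homomorphism, the commuting square, that $F$ is a locality algebra homomorphism (Proposition~\mref{prop:zhomo}), that $\eta$ is a locality algebra homomorphism on $\Q F_c^{\mathrm{Ch}}$ (Lemma~\mref{lemma:eta}, applicable by the previous remark), and the commuting square again, we obtain
\begin{align*}
\zeta^\shap([\vec s]\shap[\vec t])
&=\zeta^\shap\Phi\!\left(\wvec{\vec s}{\vec u}\shap\wvec{\vec t}{\vec v}\right)
=\eta F\!\left(\wvec{\vec s}{\vec u}\shap\wvec{\vec t}{\vec v}\right)
=\eta\!\left(F\wvec{\vec s}{\vec u}\cdot F\wvec{\vec t}{\vec v}\right)\\
&=\eta F\wvec{\vec s}{\vec u}\cdot\eta F\wvec{\vec t}{\vec v}
=\zeta^\shap\Phi\wvec{\vec s}{\vec u}\cdot\zeta^\shap\Phi\wvec{\vec t}{\vec v}
=\zeta^\shap([\vec s])\,\zeta^\shap([\vec t]),
\end{align*}
which is what we wanted.

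The genuine content has already been placed elsewhere: closure of $\calh_\Z^0$ (Theorem~\mref{thm:CAlg}), the locality homomorphisms $F$ (Proposition~\mref{prop:zhomo}) and $\eta$ (Lemma~\mref{lemma:eta}), and associativity of the two-row product (Proposition~\mref{p:tworowprod}) do the real work. The only step requiring care here is the verification that $\eta\circ F$ reproduces $\zeta^\shap$ — a matter of matching the bookkeeping of summation variables and invoking absolute convergence in the convergent region — together with keeping the locality side conditions ($\top$ preserved along each arrow of the square and of the chase) correctly aligned; I do not foresee a genuine obstacle beyond this.
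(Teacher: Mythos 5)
Your proposal is correct and follows essentially the same route as the paper: reduce to basis elements, lift them to disjointly-indexed Chen symbols, and chase the commutative square $\zeta^\shap\Phi=\eta F$ using that $\Phi$, $F$, and $\eta$ are (locality) algebra homomorphisms (Propositions~\mref{prop:phihomo}, \mref{prop:zhomo} and Lemma~\mref{lemma:eta}), with Proposition~\mref{prop:Chenconve} guaranteeing that all intermediate objects stay in the convergent subalgebras. The only cosmetic differences are that you run the chain of equalities in the opposite direction and spell out the verification of $\zeta^\shap\Phi=\eta F$ on basis symbols, which the paper leaves as a direct check.
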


\begin {proof} For $[s_1, \cdots, s_m], [t_1, \cdots , t_p]\in \calh _{\Z }^0$, we have
\begin{align*}
&\zeta^\shap ([s_1, \cdots, s_m])\zeta^\shap ( [t_1, \cdots , t_p])\\
=&\big(\zeta^\shap \Phi\wvec{s_1, \cdots, s_m}{1, \cdots , m}\big)\big(\zeta^\shap \Phi\wvec{t_1, \cdots, t_p}{{m+1}, \cdots , {m+p}}\big) \quad \text{(by the definition of } \Phi)\\
=&(\eta F\wvec {s_1, \cdots, s_m}{1, \cdots , m})(\eta F\wvec {t_1, \cdots, t_p}{m+1, \cdots , m+p})\quad \text{(by}~\wvec {s_1, \cdots, s_m}{1, \cdots , m}\top \wvec {t_1, \cdots, t_p}{m+1, \cdots , m+p}, \zeta^\shap \Phi=\eta F)\\
=&\eta\big(F\wvec {s_1, \cdots, s_m}{1, \cdots , m} F\wvec {t_1, \cdots, t_p}{m+1, \cdots , m+p}\big)\quad (\text{by Lemma}~ \mref{lemma:eta})\\
=&\eta F(\wvec {s_1, \cdots, s_m}{1, \cdots , m}\shap \wvec {t_1, \cdots, t_p}{m+1, \cdots , m+p})\quad (\text{by Proposition}~ \mref{prop:zhomo})\\
=&\zeta^\shap\Phi(\wvec {s_1, \cdots, s_m}{1, \cdots , m}\shap \wvec {t_1, \cdots, t_p}{m+1, \cdots , m+p})\quad \text{(by}~\eta F=\zeta^\shap \Phi)\\
=&\zeta^\shap(\Phi\wvec {s_1, \cdots, s_m}{1, \cdots , m}\shap \Phi\wvec {t_1, \cdots, t_p}{m+1, \cdots , m+p})\quad (\text{by Proposition}~ \mref{prop:phihomo})\\
=&\zeta^\shap ([s_1, \cdots, s_m]\shap [t_1, \cdots, t_p]). \quad \text{(by the definition of } \Phi)\qedhere
\end{align*}
\end{proof}

\noindent
{\bf Acknowledgments.} This research is supported by
NNSFC (12471062).

\noindent
{\bf Declaration of interests. } The authors have no conflict of interest to declare that are relevant to this article.

\noindent
{\bf Data availability. } Data sharing is not applicable to this article as no data were created or analyzed.

\vspace{-.2cm}

\end{document}